\documentclass{amsart}
\usepackage{amsmath,amssymb,amsthm}
\usepackage[margin=1in]{geometry}
\usepackage{graphicx}
\usepackage{tikz}
\usetikzlibrary{arrows,calc,backgrounds}
\usepackage{float,subfig}
\usepackage{color,soul}
\newcommand\scalemath[2]{\scalebox{#1}{\mbox{\ensuremath{\displaystyle #2}}}}

\newtheorem{prop}{Proposition}
\newtheorem*{prop*}{Proposition}
\newtheorem{thm}{Theorem}
\newtheorem{conj}{Conjecture}
\newtheorem{lem}{Lemma}

\title{Higher Codimension Cycles on the Hilbert Scheme of Three Points on the Projective Plane}
\author{Tim Ryan}
\address{Department of Mathematics, University of Michigan, Ann Arbor, MI 48109}
\email{rtimothy@umich.edu}

\author{Alexander Stathis}
\address{Department of Mathematics, University of Georgia, Athens, GA 30602}
\email{stathis.alexanderj@gmail.com}

\begin{document}
\maketitle
\begin{abstract}In this paper, we study the higher codimensional cycle structure of the Hilbert scheme of three points in the projective plane.
In particular, we compute all Chern/Segre classes of all tautological bundles on it and compute the nef (effective) cones of cycles in codimensions 2 and 3 (dimensions 2 and 3).\end{abstract}

\section{Introduction}
Some of the most important invariants of an algebraic variety are its nef and effective cones of divisors. 
The nef cone controls all embeddings of the variety into projective space, while in nice cases, the effective cone controls all birational contractions of the variety. 
These have been computed for many interesting classes of variety.
In recent years, there has been work generalizing the definitions of the effective and nef cones to the setting of higher codimensional cycles \cite{FL}. 
The effective cone generalizes in a unique way, but the nef cone generalizes into four distinct cones of ``positive'' cycles.
These have begun to be studied in various contexts,
and each new example we are able to compute may help build our intuition and shape future results.
Some examples that have already been studied are powers of elliptic curves and Abelian surfaces \cite{DELV}, moduli spaces of curves with marked points \cite{CC}, spaces of complete conics \cite{LH}, projective bundles over curves \cite{Fu}, and blow ups of projective spaces/Grassmannians \cite{CLO,K}.
This paper will add to to this growing body of examples.

One such case is the Hilbert scheme of points on a surface. More particularly, the Hilbert scheme of points on the projective plane is such a case (nef cone - \cite{LQZ}, effective cone - \cite{CHW}, entire stable base locus decomposition - \cite{LZ}).
Given an integer $n\geq 2$, let $\mathbb{P}^{2[n]}$ be the Hilbert of $n$ points on $\mathbb{P}^2$ which parameterizes subschemes of length $n$, or equivalently with Hilbert polynomial $n$, on $\mathbb{P}^2$.
The generic such subscheme is a collection of $n$ distinct points.
$\mathbb{P}^{2[n]}$ is a smooth projective variety of dimension $2n$, as is the Hilbert scheme of $n$ points on any smooth projective surface \cite{F2}.

Since $\mathbb{P}^{2[n]}$ inherits a group action from $\mathbb{P}^{2}$, the results of Bialynicki-Birula \cite{BB} have been used to compute its homology/Chow ring \cite{EL3, ES, ES2} which are the same as all forms of equivalence are the same on these varieties.

The Chow ring has also been given multiple geometric bases \cite{MS} which make these good candidates for us to compute the cones of higher codimensional cycles.
In addition to the effective cone of cycles, we first consider the nef cone of cycles which is defined to be the cone of cycles dual to the effective cone of complementary codimension cycles under the intersection pairing.
Using these bases, we compute the nef cone in codimension 2 and 3 and dually the effective cone in dimensions 2 and 3 on the Hilbert scheme of three points.
\begin{thm}
$\text{Nef}^2\left(\mathbb{P}^{2[3]}\right)$ and $\text{Nef}^3\left(\mathbb{P}^{2[3]}\right)$ are finite polyhedral cones with 6 and 8 extremal rays and their extremal rays are known in the geometric bases defined by Mallavibarrena and Sols, see Section \ref{sec:msbasis}. Conversely, the same statement holds for $\text{Eff}_2\left(\mathbb{P}^{2[3]}\right)$ and $\text{Eff}_3\left(\mathbb{P}^{2[3]}\right)$ with 6 and 7 classes, respectively.
In particular, these effective cones are closed.
\end{thm}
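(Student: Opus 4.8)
The goal is to compute the nef cones of codimension 2 and 3 cycles on $X = \mathbb{P}^{2[3]}$, which is a smooth projective 6-fold, together with the dual effective cones in dimensions 2 and 3. Let me think about the strategy.

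First, the setup: I need the Chow ring of $X$ with an explicit basis. The paper says this is the Mallavibarrena-Sols geometric basis. So the Chow groups $A^2(X)$ and $A^3(X)$ are free abelian groups of known ranks, with generators given by explicit geometric cycles. By Poincaré duality (Bialynicki-Birula), $A^i(X) \cong A_{6-i}(X)$, and the intersection pairing $A^i \times A^{6-i} \to \mathbb{Z}$ is perfect. For $i=2,3$ I'd want the intersection matrices in the MS bases.

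Strategy for the proof:

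1. **Compute intersection numbers.** Using the MS basis and the known ring structure (presumably reviewed in an earlier section using the Chern/Segre class computations of tautological bundles mentioned in the abstract), assemble the pairing matrices $M_2: A^2 \times A^4 \to \mathbb{Z}$ and $M_3: A^3 \times A^3 \to \mathbb{Z}$.

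2. **Identify the effective cones $\text{Eff}_2$ and $\text{Eff}_3$.** This is the crux. I'd need to exhibit a finite generating set of effective classes and prove no others are effective. Generators should be geometrically natural subvarieties — e.g., loci where the length-3 scheme is constrained (collinear, has a point on a fixed line, is supported at a fixed point, is non-reduced in a prescribed way), and products/pushforwards of such. The MS basis itself might consist of effective classes, giving containment one way; the reverse (these generate *all* effective classes) requires showing the cone they span contains every irreducible subvariety's class, likely via a dual argument.

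3. **Dualize to get nef cones.** Once $\text{Eff}_4$ and $\text{Eff}_3$ are pinned down as finite polyhedral cones, $\text{Nef}^2 = (\text{Eff}_4)^\vee$ and $\text{Nef}^3 = (\text{Eff}_3)^\vee$ under the pairing — finitely many generators of $\text{Eff}$ give finitely many inequalities cutting out $\text{Nef}$, so it's automatically polyhedral; then convert from the inequality description (facets) to the ray description via a convex-geometry computation (vertex enumeration), yielding the claimed 6 and 8 extremal rays. Symmetrically, by self-duality in dimension 3, relate $\text{Nef}^3$ and $\text{Eff}_3$ directly.

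4. **Closedness** of $\text{Eff}_2, \text{Eff}_3$ follows for free once they're shown to be finitely generated (polyhedral cones are closed).

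I'd expect the main obstacle to be **Step 2**: proving that a proposed list of effective generators is complete. Showing a class is effective is easy (exhibit the variety); showing a class is *not* effective, or that the effective cone is no bigger than the span of your list, is the hard direction. The natural approach: pick dual "moving" curve-type or higher-dimensional classes (classes of cycles that move in a covering family) and show any effective class pairs non-negatively against them — the cone of such "pseudo-nef" test classes, when rich enough, forces $\text{Eff}$ into the claimed cone. Concretely, I'd try to produce, for each proposed facet of $\text{Eff}_i$, a cycle class $C$ of complementary dimension that is covering (hence nef in the relevant sense) and vanishes on exactly the facet; pairing an arbitrary irreducible $V$ against $C$ gives $\geq 0$, trapping $[V]$. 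Making this work likely exploits the small size of $n=3$ heavily — e.g. the geometry of the Hilbert-Chow morphism $\mathbb{P}^{2[3]} \to \mathbb{P}^{2(3)}$ and the boundary divisor, and possibly a group-equivariant (Bialynicki-Birula torus-fixed-point) argument to reduce effectivity questions to combinatorics of fixed loci.

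A secondary subtlety: there are a priori four distinct "nef"-type cones in higher codimension (from \cite{FL}), and the theorem speaks of the one dual to $\text{Eff}$; I'd want to state clearly that it's the "nef = dual of effective" cone being computed, and—if possible—remark on whether it coincides with the others here. I do not expect to need that for the stated theorem, but it's worth a sentence.
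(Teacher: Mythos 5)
Your overall architecture is the right one, and it matches the paper's: exhibit explicit effective representatives of the proposed extremal classes, exhibit a dual list of classes certified nef, and let the numerical duality of the two finitely generated cones sandwich both $\mathrm{Eff}$ and $\mathrm{Nef}$ (closedness then being automatic). The gap is in the one step you flag as the crux but then dispose of too quickly: your certificate of nefness for the test classes is that they ``move in a covering family, hence are nef in the relevant sense.'' In codimension $\geq 2$ this implication is false, or at least completely unjustified: a general member of a covering family can still meet a special $k$-dimensional subvariety $Z$ in excess dimension, and the resulting intersection number can be negative, since excess components may contribute with either sign. This is exactly the phenomenon that makes higher-codimension positivity subtle, and nothing in your outline replaces it. (Your alternative suggestion of a Bialynicki--Birula torus-fixed-point argument is also not developed into a mechanism that controls these excess intersections.)

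The paper supplies precisely this missing mechanism. It uses the $\mathrm{PGL}(3)$-action on $\mathbb{P}^{2[3]}$, which has only seven orbits, together with Kleiman's transversality theorem: if a codimension-$k$ representative meets every orbit in the expected dimension, generic translates meet any $k$-fold properly and nonnegativity is automatic; and if some orbits are met in one dimension more than expected, nefness reduces (Lemma \ref{lem group}) to checking nonnegativity against $k$-folds contained in the closures of those bad orbits. Making that check possible is the bulk of the work (Section 5): one must compute the classes in $A^*\bigl(\mathbb{P}^{2[3]}\bigr)$ of the orbit closures and of the generators of their own nef and effective cones, so that classes such as $A+B$, $A+E$, $V+Y$, $X+Y$, $2Y+Z$ --- whose obvious representatives meet the collinear or punctual orbits in excess dimension --- can still be certified nef by pairing against the extremal effective cycles of those orbit closures. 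Without this orbit-by-orbit analysis (or some equivalent substitute), your Step 2/3 cannot be completed, because the inclusion ``candidate nef cone $\subseteq \mathrm{Nef}$'' is never established; the rest of your plan (intersection matrices, dualization, polyhedrality, closedness) is routine once that inclusion is in hand.
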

We defer explicitly stating the extremal rays of the cones as we first need to define the bases of the cycle spaces before that makes any sense, see Section \ref{sec:msbasis}.
The proof of this result utilizes the structure of the orbits of the inherited group action. 
In particular, it reveals much finer information about the variety than the nef and effective cones of divisors, which only depend on the divisorial orbits.
The technique of proof is related to and to some extent formalizes the approach of Lozano Huerta \cite{LH}.
In order to look at some of the other positive cones on the Hilbert scheme, we need to approach the Hilbert schemes from a different perspective utilizing vector bundles on the Hilbert scheme.

The vector bundles on Hilbert schemes of points have also been extensively studied.
In particular, given a vector bundle on the surface, one can pull it back to the universal family over the Hilbert scheme and then push it forward to the Hilbert scheme. 
The Segre classes of these \textit{tautological bundles} have enumerative applications and were the subject of Lehn's conjecture \cite{Le}, which was subsequently proven \cite{V, MOP}.
In particular, Lehn's conjecture gave a generating series for the top Segre classes of tautological bundles coming from line bundles.
This has now been generalized to tautological bundles coming from vector bundles \cite{MOP}.
Conversely, the first Segre/Chern class of a tautological bundle is well known. We then compute all Chern (and hence Segre) classes of all tautological bundles on $\mathbb{P}^{2[3]}$.
Recall that $c_i(V)$ is the $i$-th Chern class of a vector bundle $V$.
\begin{thm}
Given a vector bundle $V$ on $\mathbb{P}^2$, $c_i\left(V^{[3]}\right)$, $1\leq i\leq6$, are known in the geometric bases defined by Mallavibarrena and Sols.
\end{thm}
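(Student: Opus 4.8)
The plan is to reduce the computation to line bundles, treat that case by torus localization, and then reassemble the answer as a universal expression in the Chern classes of $V$.

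First I would record that $V\mapsto V^{[3]}$ is exact on vector bundles. Writing $V^{[3]}=p_*(q^*V\otimes\mathcal{O}_{\mathcal{Z}_3})$ for the universal subscheme $\mathcal{Z}_3\subset\mathbb{P}^{2[3]}\times\mathbb{P}^2$ with its two projections $p$ and $q$, the family $\mathcal{Z}_3$ is flat over $\mathbb{P}^{2[3]}$ and $p|_{\mathcal{Z}_3}$ is finite, so applying $q^*(-)\otimes\mathcal{O}_{\mathcal{Z}_3}$ and then $p_*$ to a short exact sequence of bundles on $\mathbb{P}^2$ preserves exactness (the first step because $q^*(-)$ is locally free, the last because $p|_{\mathcal{Z}_3}$ is finite), and the resulting sheaves on $\mathbb{P}^{2[3]}$ are locally free by flatness. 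Hence $c_t((V'\oplus V'')^{[3]})=c_t(V'^{[3]})\,c_t(V''^{[3]})$, so $[V]\mapsto c_t(V^{[3]})$ factors through $K^0(\mathbb{P}^2)$. Since $K^0(\mathbb{P}^2)$ is spanned by $\mathcal{O}$, $\mathcal{O}(1)$, $\mathcal{O}(2)$, it suffices to compute $c_t(\mathcal{O}(d)^{[3]})$ for $d=0,1,2$; the general answer then follows by multiplicativity and linear algebra in $K^0(\mathbb{P}^2)$, which also shows that the coefficients of $c_i(V^{[3]})$ in any fixed basis are determined by $\mathrm{rk}(V)$, $c_1(V)$ and $c_2(V)$.

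Second, for the line bundle case I would use the torus $T=(\mathbb{C}^*)^2$ acting on $\mathbb{P}^2$ and hence on $\mathbb{P}^{2[3]}$. The fixed locus consists of the $22$ monomial ideals of colength $3$ supported at the three coordinate points, indexed by distributions of three boxes among three Young diagrams, and the induced $T$-weights on the tangent spaces and on the fibers of the tautological bundles at these points are classical (see \cite{ES,ES2}). With this data the Atiyah--Bott localization formula evaluates every intersection number $\int_{\mathbb{P}^{2[3]}}c_i(\mathcal{O}(d)^{[3]})\cdot\gamma$ with $\gamma$ running over the Mallavibarrena--Sols basis of $A^{6-i}(\mathbb{P}^{2[3]})$. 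Because the intersection pairing on $A^*(\mathbb{P}^{2[3]})$ is perfect and the pairing matrix between the Mallavibarrena--Sols bases of $A^i$ and $A^{6-i}$ is an explicit invertible integer matrix (assembled from \cite{MS} together with \cite{ES,ES2}; see Section~\ref{sec:msbasis}), inverting it turns these numbers into the coordinates of $c_i(\mathcal{O}(d)^{[3]})$ in the geometric basis. Carrying this out for $d=0,1,2$ and then performing the linear-algebra step above produces the desired formulas for $c_i(V^{[3]})$, $1\le i\le 6$.

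I expect the main obstacle to be the interface with the geometric bases rather than the localization itself: one needs the Mallavibarrena--Sols generators and all of their mutual intersection numbers completely explicit in the relevant codimensions, which is where the bulk of the bookkeeping lies. A secondary point is to confirm that the reconstruction from the line bundle case is the correct universal one; this is guaranteed by the factorization through $K^0(\mathbb{P}^2)$, but it should be cross-checked against the classical expression for $c_1(V^{[3]})$ in terms of $c_1(V)$ and the boundary divisor of $\mathbb{P}^{2[3]}$, and against the vanishing $c_i(V^{[3]})=0$ for $i\ge 4$ when $\mathrm{rk}(V)=1$ (as $V^{[3]}$ then has rank $3$). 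As further independent checks one can recompute the low Chern, and hence Segre, classes by Grothendieck--Riemann--Roch for $p$ using the known class of $\mathcal{Z}_3$ in $A^*(\mathbb{P}^{2[3]}\times\mathbb{P}^2)$, or via the ``add a point'' exact sequences on the nested Hilbert schemes $\mathbb{P}^{2[1,2]}$ and $\mathbb{P}^{2[2,3]}$, and compare the top Segre class with Lehn's formula.
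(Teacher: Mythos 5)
Your reduction to line bundles is essentially the paper's: the paper also observes that a short exact sequence of bundles on $\mathbb{P}^2$ stays exact after applying $(-)^{[3]}$ (finite flat universal family, no higher direct images), that $\{\mathcal{O}(d),\mathcal{O}(d+1),\mathcal{O}(d+2)\}$ generates, and that the rest is linear algebra in the known Chow ring; your Whitney-formula/$K^0(\mathbb{P}^2)$ phrasing is the same argument, and your observation that $d=0,1,2$ suffices and that the answer depends only on $\mathrm{rk}$, $c_1$, $c_2$ matches the Appendix formulas. Where you genuinely diverge is the line-bundle input. The paper computes $c_i(\mathcal{O}(d)^{[3]})$ for all $d$ at once by exhibiting geometric representatives (loci of schemes lying on a curve from a fixed $(4-i)$-dimensional system of degree-$d$ curves), counting their set-theoretic intersections with the dual Mallavibarrena--Sols cycles, and proving transversality by hand in charts at reduced schemes; this buys closed formulas in $d$ and geometric representatives that are reused later in the orbit/nef-cone arguments. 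You instead propose Atiyah--Bott localization at the $22$ monomial ideals for $d=0,1,2$ and recover the $d$-dependence through the $K$-theory step; this is mechanically sound, but note that localizing $\int c_i(\mathcal{O}(d)^{[3]})\cdot\gamma$ requires equivariant lifts or fixed-point restrictions of the MS classes $\gamma$, which are defined via \emph{general} flags --- specializing to coordinate flags and controlling the flat limits is exactly the nontrivial bookkeeping you flag, and in practice one would localize in the Ellingsrud--Str{\o}mme cell basis (whose fixed-point data is immediate) and then convert to the MS basis by the known change of basis, rather than localizing against the MS cycles directly. With that caveat handled, your route gives the theorem, at the cost of the explicit $d$-formulas and geometric representatives that the paper's transversality computation provides for free.
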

We again defer the precise statement until we have defined the appropriate bases.
We further compute when some of these tautological bundles are 2-very ample. 
This provides an inner bound for another generalization of the nef cone to higher codimensional cycles, the pliant cone. 

The paper is organized as follows. 
Section 2 covers the preliminaries of Grassmannians, tautological bundles, bases of the Chow ring, cones of cycles, and exceptional bundles.
Section 3 computes the tautological classes of line bundles. 
Section 4 gives lemmas necessary for the computations of the nef cones. 
Section 5 works out the cycle structure of the orbits of $\mathbb{P}^{2[3]}$.
Section 6 contains the proofs of the nef and effective cones. 
Finally, Section 7 concerns 2-very ampleness and the pliant cones.
Additionally, Appendix A lists the formulas for the Chern classes of tautological bundles coming from higher rank vector bundles.

The authors would like to thank Izzet Coskun, Rob Lazarsfeld, Jack Huizenga, C\'esar Lozano Huerta, Jay Kopper, Daniel Levine, Dmitrii Pedchenko and John Sheridan for many useful conversations on the work in this paper. 
The first author was partially supported by NSF grant DMS-1547145.
\section{Preliminaries}

\subsection{Basics About Grassmannians}

We recall some basic facts about Grassmannians. Fix nonzero integers $k$ and $n$ such that $0 < k \leq n$, and let $G = G(k,n)$ be the Grassmannian of $k$-dimensional subspaces of an $n$-dimensional vector space $V$. Given a nonincreasing sequence $\lambda = (\lambda_1,\ldots,\lambda_k)$ of nonnegative integers such that $\lambda_i \leq n-k$ for all $1 \leq i \leq k$ and a fixed flag $$F_0 \subset F_1 \subset \cdots \subset F_{n-1} \subset V$$
of subspaces of $V$, one has the \emph{Schubert variety}
\begin{equation}\label{eqn:schubert}\Sigma_\lambda := \left\{W \in G : \dim(W \cap F_{n-k+i -\lambda_i}) \geq i \text{ for all } 1 \leq i \leq k \right\}.\end{equation}
We denote by $\sigma_\lambda := [\Sigma_\lambda]$ the corresponding class in the Chow ring $A^*(G)$ of $\Sigma_\lambda$. 

Let $\mathcal{V} = V \times G$ be the trivial vector bundle of rank $n$. The Grassmannian admits a tautological sequence
$$0 \to \mathcal{T} \to \mathcal{V} \to \mathcal{Q} \to 0$$
where $\mathcal{T}$ is the tautological subbundle of rank $k$ whose fiber over a point $W \in G$ is the subspace $W \subset V$ and $\mathcal{Q}$ is the tautological quotient bundle of rank $n-k$. It is well known that the Chern classes of the bundles are
$$c_i(\mathcal{T}) = \sigma_{1^i} \text{ and } c_i(\mathcal{Q}) = \sigma_i,\footnote{Here $1^k$ denotes the sequence $(1,\ldots ,1,0,\ldots,0)$ and $i$ is the sequence $(i,0,\ldots,0)$, both of the appropriate lengths.}$$ 
and that the Chern classes of either bundle generates the Chow ring $A^*(G)$. Of course, more is known, see \cite{EH,Fult}. 

\subsection{Tautological bundles and Schur classes}
Let $\mathcal{Z} \subseteq \mathbb{P}^{2[3]} \times \mathbb{P}^2$ be the universal family of the Hilbert scheme, and let $p : \mathcal{Z} \to \mathbb{P}^{2[3]}$ and $q: \mathcal{Z} \to \mathbb{P}^2$ be restricted projections. Given a sheaf $\mathcal{F}$ on $\mathbb{P}^2$, we define the \emph{tautological sheaf} 
$$\mathcal{F}^{[3]} := p_*(q^*(\mathcal{F}))$$
on $\mathbb{P}^{2[3]}$. When $\mathcal{F}$ is the line bundle $\mathcal{O}(d)$ on $\mathbb{P}^2$, the tautological sheaf $\mathcal{O}(d)^{[3]}$ is a vector bundle of rank 3. 

The fiber of $\mathcal{O}(d)^{[3]}$ over a point $Z \in \mathbb{P}^{2[3]}$ is $H^0(\mathcal{O}_Z(d))$. When $d \geq 2$, this space is naturally a quotient 
\begin{equation}\label{eqn:subscheme}0 \to H^0(\mathcal{I}_Z(d)) \to H^0(\mathcal{O}(d)) \to H^0(\mathcal{O}_Z(d)) \to 0\end{equation}
of the global sections of the line bundle $\mathcal{O}(d)$ on $\mathbb{P}^2$, and so the tautological bundle $\mathcal{O}(d)^{[3]}$ induces the regular map
$$\varphi_d:\mathbb{P}^{2[3]} \to G(3,h^0(\mathcal{O}(d)))$$
to the Grassmannian of quotients of $H^0(\mathcal{O}(d))$ defined by sending the point $Z$ to the quotient in Equation \ref{eqn:subscheme}. 
When $d= 2$ this is a divisorial contraction;
when $d > 2$, this map is an embedding \cite{ABCH}. 

Maybe more naturally one views this as a map to the dual Grassmannian $G(h^0(\mathcal{O}(d)) -3 ,h^0(\mathcal{O}(d)))$ of subspaces of $H^0(\mathcal{O}(d))$ defined by sending $Z$ to the kernel $H^0(\mathcal{I}_Z(d))$ in Equation \ref{eqn:subscheme}. Correspondingly, we have that $\mathcal{O}(d)^{[3]}$ is the pullback $\varphi_d^*(\mathcal{Q})$ of the tautological quotient bundle $\mathcal{Q}$ on the Grassmannian $G(h^0(\mathcal{O}(d)) -3 ,h^0(\mathcal{O}(d)))$. In particular, we have that the Chern classes of $\mathcal{O}(d)^{[3]}$ are 
$$c_i(\mathcal{O}(d)^{[3]}) = \varphi_d^*(c_i(\mathcal{Q})).$$
Furthermore, if $\lambda$ is a nonincreasing sequence of nonnegative integers defining a Schubert class $\sigma_\lambda$ in the Chow ring of $G(h^0(\mathcal{O}(d)) -3 ,H^0(\mathcal{O}(d)))$, then the \emph{Schur class} $c_\lambda(\mathcal{O}(d)^{[3]})$ is defined to be
$$c_\lambda(\mathcal{O}(d)^{[3]}) := \varphi_d^*(\sigma_\lambda).$$
Since the Chow ring of the Grassmannian is generated by the Schubert classes $\sigma_k = c_i(\mathcal{Q})$, these are naturally polynomials in the Chern classes $c_i(\mathcal{O}(d)^{[3]})$. 

Note Schur classes are defined analogously for any globally defined vector bundle $V$ on any variety $X$ using the map $X \to G\left(h^0(V)-rk(V),h^0(V)\right)$. Since Chern classes extend to all vector bundles, Schur classes do as well and can be defined directly from the Chern classes using Schubert relations.

\subsection{Geometric Description of the Chern Classes} \label{sec:geometricdescriptions}

In light of Equations \ref{eqn:schubert} and \ref{eqn:subscheme}, we can give a geometric description of the Chern classes $c_i(\mathcal{O}(d)^{[3]})$ for $d \geq 2$. Let $N = h^0(\mathcal{O}_{\mathbb{P}^2}(d)) = \frac{(d+2)(d+1)}{2}$ be the dimension of the space of homogeneous polynomials of degree $d$ in three variables. The map $\varphi_d$ defined in Equation \ref{eqn:subscheme} and viewed as a map to the dual Grassmannian $G(N-3, N)$ of subspaces of $H^0(\mathcal{O}(d))$ sends a scheme $Z \in \mathbb{P}^{2[3]}$ to the subspace $H^0(\mathcal{I}_Z(d))$ of degree $d$ polynomials vanishing completely on $Z$.

The Schubert variety $\Sigma_{j}$ for $1 \leq j \leq 3$ on $G(N-3,N)$ is therefore defined as (via Equation \ref{eqn:schubert})
$$\Sigma_{j} = \left\{W \in G(N-3,N) : \dim(W \cap F_{4-j}) \geq 1\right\}$$
where $F_{4-j}$ is a fixed $4-j$ dimensional subspace of homogeneous polynomials of degree $d$ in three variables. Its pullback $\varphi_d^{-1}(\Sigma_j)$ is therefore the locus of schemes $Z$ in $\mathbb{P}^{2[3]}$ upon which some polynomial in $F_{4-j}$ vanishes completely. 

In slightly different terms, fix a $4-i$ dimensional vector space $W$ of curves of degree $d$ in $\mathbb{P}^2$. The locus $U_W$ of schemes $Z \in \mathbb{P}^{2[3]}$ contained in some curve in $W$ has class
\begin{equation}\label{eqn:geometric} [U_W] = c_i(\mathcal{O}(d)^{[3]}).\end{equation}
One way of fixing a $4-i$ dimensional subspace of curves is to consider the space of curves containing $N-4+i$ general fixed points in $\mathbb{P}^2$.

\subsection{The MS basis}

We now recall the Mallavibarrena and Sols (MS) basis for $A^*(\mathbb{P}^{2[3]})$. For a more complete discussion, see \cite{MS,ME}. There are 22 total classes comprising the MS basis. Two of which are the fundamental class and the class of a point. The Hilbert scheme $\mathbb{P}^{2[3]}$ is smooth, so the Chow group of cycles of dimension $k$ are dual to the Chow group of cycles of codimension $k$ under the intersection pairing.

\subsubsection{Divisors and Curves}

There are two divisor and two curve classes. The first divisor, which we call $H$, is the class of the locus of schemes incident to a general fixed line. The second, which we call $F$, is the class of the locus of schemes containing a subscheme of length two collinear with a general fixed point. 

The first curve class $\varphi$ is the class of the locus of schemes containing two general fixed points and incident to a general fixed line. Let $P \in L$ be a fixed general point $P$ in a fixed general line $L$. The second curve class $\psi$ is the class of the locus of schemes containing $P$, meeting $L$ in length two, and containing a third fixed general point.

See Figure \ref{fig:divsandcurves} for heuristic pictures of the loci representing the classes, and Table \ref{tab:divsandcurves} for the intersection pairing. 

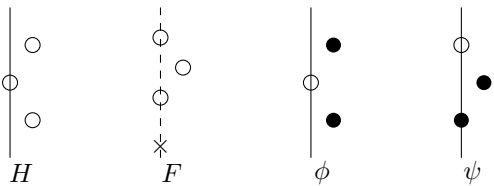
\begin{figure}[h]
    \centering
\begin{tikzpicture}
\draw  (0,-1)--(0,1);
\node[draw=black,circle,scale=.6] at (0,0) {};
\node[draw=black,circle,scale=.6] at (0.3,.5) {};
\node[draw=black,circle,scale=.6] at (0.3,-.5) {};
\node at (.15,-1.2) {$H$};
\draw [dashed]  (2,-1)--(2,1);
\node[draw=black,circle,scale=.6] at (2.3,.2) {};
\node[draw=black,circle,scale=.6] at (2.0,.6) {};
\node[draw=black,circle,scale=.6] at (2.0,-.2) {};
\node at (2,-.85) {$\times$};
\node at (2.15,-1.2) {$F$};
\draw  (4,-1)--(4,1);
\node[draw=black,circle,scale=.6] at (4,0) {};
\node[fill,circle,scale=.6] at (4.3,.5) {};
\node[fill,circle,scale=.6] at (4.3,-.5) {};
\node at (4.15,-1.2) {$\phi$};
\draw  (6,-1)--(6,1);
\node[fill,circle,scale=.6] at (6.3,0) {};
\node[draw=black,circle,scale=.6] at (6,.5) {};
\node[fill,circle,scale=.6] at (6,-.5) {};
\node at (6.15,-1.2) {$\psi$};
\end{tikzpicture}

    \caption{Heuristic pictures of the divisor and curve classes.}
    \label{fig:divsandcurves}
\end{figure}

\begin{table}[h]
    \centering
    \begin{tabular}{c|cc}
        & $\varphi$ & $\psi$ \\ \hline
        $H$ &1 &1\\
        $F$ &2 &1
    \end{tabular}
    \caption{The intersection pairing of the divisors and curves.}
    \label{tab:divsandcurves}
\end{table}

\subsubsection{Fourfolds and Surfaces}
There are five fourfold and  surface classes, i.e. $A^2_\mathbb{Q}\left(\mathbb{P}^{2[3]}\right) \simeq A_{2,\mathbb{Q}}\left(\mathbb{P}^{2[3]}\right) \simeq \mathbb{Q}^5$. Let $L$ and $M$ be general fixed lines, and let $P$ be a fixed point which is in neither line. The first fourfold, which we call $A$, is the class of the locus schemes collinear with $P$. The second, which we call $B$, is the class of the locus of schemes containing a subscheme of length two collinear with $P$ and whose remaining point is incident to $M$. The third, which we call $C$, is the class of the locus of schemes incident to $L$ and $M$ and intersecting their union in length at least two. The fourth, which we call $D$, is the class of the locus of schemes meeting $M$ in length at least two. The fifth, which we call $E$, is the class of the locus of schemes containing $P$.

Let $L$, $M$, and $N$ be lines with $P$ a general point of $L$ and $Q$ a general point of $M$. The first surface class $\alpha$ is the class of the locus of schemes contained in $L$ and containing $P$.  The second surface class $\beta$ is the class of the locus of schemes containing $P$, meeting $L$ in length two, incident to $M$, and contained in $L \cup M$. The third surface class $\gamma$ is the class of the locus of schemes containing $P$, incident to $M$ and $N$, and contained in $M \cup N \cup p$.  The fourth surface class $\delta$ is the class of the locus of schemes containing $P$ and meeting $M$ in length at least two.  The fifth surface class $\epsilon$ is the class of the locus of schemes containing $P$ and $Q$.

See Figure \ref{fig:fourfoldsandsurfaces} for heuristic pictures of the loci representing the classes, and Table \ref{tab:fourfoldsandsurfaces} for the intersection pairing. 

\begin{figure}[h]
    \centering
\begin{tikzpicture}
\draw[dashed]  (0,-1)--(0,1);
\node[draw=black,circle,scale=.6] at (0,0.1) {};
\node[draw=black,circle,scale=.6] at (0,.6) {};
\node[draw=black,circle,scale=.6] at (0,-.4) {};
\node at (0,-.85) {$\times$};
\node at (0,-1.2) {$A$};
\draw [dashed]  (2,-1)--(2,1);
\draw  (2.3,-1)--(2.3,1);
\node[draw=black,circle,scale=.6] at (2.3,.2) {};
\node[draw=black,circle,scale=.6] at (2.0,.6) {};
\node[draw=black,circle,scale=.6] at (2.0,-.2) {};
\node at (2,-.85) {$\times$};
\node at (2.15,-1.2) {$B$};
\draw  (3.85,-1)--(3.85,1);
\draw  (4.15,-1)--(4.15,1);
\node[draw=black,circle,scale=.6] at (3.85,0.5) {};
\node[draw=black,circle,scale=.6] at (4.15,0) {};
\node[draw=black,circle,scale=.6] at (4.45,-.5) {};
\node at (4.15,-1.2) {$C$};
\draw  (6,-1)--(6,1);
\node[draw=black,circle,scale=.6] at (6.3,0) {};
\node[draw=black,circle,scale=.6] at (6,.5) {};
\node[draw=black,circle,scale=.6] at (6,-.5) {};
\node at (6.15,-1.2) {$D$};
\node[fill,circle,scale=.6] at (8.3,0) {};
\node[draw=black,circle,scale=.6] at (8,.5) {};
\node[draw=black,circle,scale=.6] at (8,-.5) {};
\node at (8.15,-1.2) {$E$};
\end{tikzpicture}
\begin{tikzpicture}
\draw  (0,-1)--(0,1);
\node[fill,circle,scale=.6] at (0,0.5) {};
\node[draw=black,circle,scale=.6] at (0,0) {};
\node[draw=black,circle,scale=.6] at (0,-.5) {};
\node at (0,-1.2) {$\alpha$};
\draw  (2,-1)--(2,1);
\draw  (2.3,-1)--(2.3,1);
\node[draw=black,circle,scale=.6] at (2.3,0) {};
\node[fill,circle,scale=.6] at (2.0,.5) {};
\node[draw=black,circle,scale=.6] at (2.0,-.5) {};
\node at (2.15,-1.2) {$\beta$};
\draw  (3.85,-1)--(3.85,1);
\draw  (4.15,-1)--(4.15,1);
\node[draw=black,circle,scale=.6] at (3.85,0.5) {};
\node[draw=black,circle,scale=.6] at (4.15,0) {};
\node[fill,circle,scale=.6] at (4.45,-.5) {};
\node at (4.15,-1.2) {$\gamma$};
\draw  (6,-1)--(6,1);
\node[fill,circle,scale=.6] at (6.3,0) {};
\node[draw=black,circle,scale=.6] at (6,.5) {};
\node[draw=black,circle,scale=.6] at (6,-.5) {};
\node at (6.15,-1.2) {$\delta$};
\node[draw=black,circle,scale=.6] at (8.3,0) {};
\node[fill,circle,scale=.6] at (8,.5) {};
\node[fill,circle,scale=.6] at (8,-.5) {};
\node at (8.15,-1.2) {$\epsilon$};
\end{tikzpicture}
    \caption{Heuristic pictures of the fourfold and surface classes.}
    \label{fig:fourfoldsandsurfaces}
\end{figure}
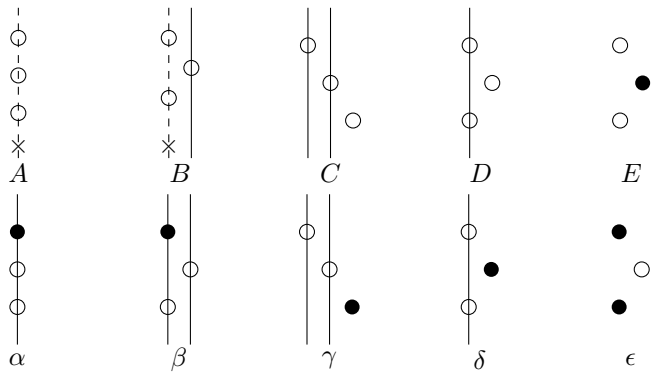

\begin{table}[h]
    \centering
    \begin{tabular}{c|ccccc}
        & $\alpha$ & $\beta$ & $\gamma$ & $\delta$ & $\epsilon$ \\ \hline
        $A$ &0 &0 &1 &0 &0 \\
        $B$ &0 &1 &2 &1 &0 \\
        $C$ &1 &2 &2 &1 &0 \\
        $D$ &0 &1 &1 &0 &0 \\
        $E$ &0 &0 &0 &0 &1 \\
    \end{tabular}
    \caption{The intersection pairing of the fourfolds and surfaces.}
    \label{tab:fourfoldsandsurfaces}
\end{table}

\subsubsection{Threefolds}
There are six threefold classes, i.e. $A^3_\mathbb{Q}\left(\mathbb{P}^{2[3]}\right) \simeq A_{3,\mathbb{Q}}\left(\mathbb{P}^{2[3]}\right) \simeq \mathbb{Q}^6$. Let $L$, $M$, and $N$ be lines with $P$ a general point of $L$ and $Q$ a general point of $M$. The first threefold class $U$ is the class of the locus of schemes containing $P$ and incident to $M$. The second threefold class $V$ is the class of the locus of schemes containing $P$ and containing a subscheme of length two collinear with $Q$. The third threefold class $W$ is the class of the locus of schemes incident to $L$, $M$, and $N$, containing length at least two in each pairwise union of them, and contained in $L \cup M \cup  N$. The fourth threefold class $X$ is the class of the locus of schemes meeting $L$ in length two, incident to $M$, and contained in $L \cup M$. The fifth threefold class $Y$ is the class of the locus of schemes contained in $L$. The sixth threefold class $Z$ is the class of the locus of schemes containing $P$ and meeting $L$ in length at least two.

See Figure \ref{fig:threefoldsandthreefolds} for heuristic pictures of the loci representing the classes, and Table \ref{tab:threefoldsandthreefolds} for the intersection pairing. 

\begin{figure}[h]
    \centering
\begin{tikzpicture}
\draw  (-0,-1)--(-0,1);
\node[draw=black,circle,scale=.6] at (.3,-0.5) {};
\node[draw=black,circle,scale=.6] at (0,0) {};
\node[fill,circle,scale=.6] at (0.3,0.5) {};
\node at (0,-1.2) {$U$};
\draw [dashed]  (2,-1)--(2,1);
\node[fill,circle,scale=.6] at (2.3,.2) {};
\node[draw=black,circle,scale=.6] at (2.0,.6) {};
\node[draw=black,circle,scale=.6] at (2.0,-.2) {};
\node at (2,-.85) {$\times$};
\node at (2.15,-1.2) {$V$};
\draw  (3.85,-1)--(3.85,1);
\draw  (4.15,-1)--(4.15,1);
\draw  (4.45,-1)--(4.45,1);
\node[draw=black,circle,scale=.6] at (3.85,0.5) {};
\node[draw=black,circle,scale=.6] at (4.15,0) {};
\node[draw=black,circle,scale=.6] at (4.45,-.5) {};
\node at (4.15,-1.2) {$W$};
\draw  (6,-1)--(6,1);
\draw  (6.3,-1)--(6.3,1);
\node[draw=black,circle,scale=.6] at (6.3,0) {};
\node[draw=black,circle,scale=.6] at (6,.5) {};
\node[draw=black,circle,scale=.6] at (6,-.5) {};
\node at (6.15,-1.2) {$X$};
\draw  (8,-1)--(8,1);
\node[draw=black,circle,scale=.6] at (8,0) {};
\node[draw=black,circle,scale=.6] at (8,.5) {};
\node[draw=black,circle,scale=.6] at (8,-.5) {};
\node at (8,-1.2) {$Y$};
\draw  (10,-1)--(10,1);
\node[draw=black,circle,scale=.6] at (10.3,0) {};
\node[fill,circle,scale=.6] at (10,.5) {};
\node[draw=black,circle,scale=.6] at (10,-.5) {};
\node at (10.15,-1.2) {$Z$};
\end{tikzpicture}
    \caption{Heuristic pictures of the threefold classes.}
    \label{fig:threefoldsandthreefolds}
\end{figure}
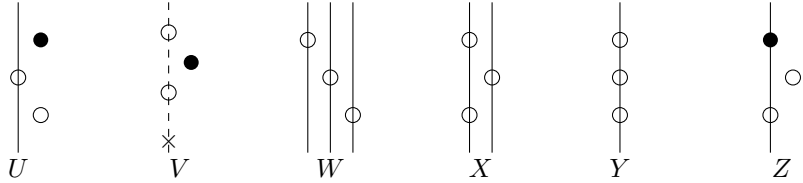

\begin{table}[h]
    \centering
    \begin{tabular}{c|cccccc}
        & $U$ & $V$ & $W$ & $X$ & $Y$ & $Z$ \\ \hline
        $U$ &1 &1 &0 &0 &0 &1\\
        $V$ &1 &1 &0 &0 &0 &0\\
        $W$ &0 &0 &6 &3 &1 &0\\
        $X$ &0 &0 &3 &1 &0 &0\\
        $Y$ &0 &0 &1 &0 &0 &0\\
        $Z$ &1 &0 &0 &0 &0 &1\\
    \end{tabular}
    \caption{The intersection pairing of the threefolds.}
    \label{tab:threefoldsandthreefolds}
\end{table}


\subsection{Cones of cycles}
In this subsection, we introduce the various cones of cycles that we use.
For a more complete discussion, see \cite{FL}.

Let $X$ be a smooth projective variety.
We want to state some generalizations of the effective and nef cones of divisors to higher codimension.
For all of these cones, we start with the vector space of algebraic cycles of dimension (codimension) $k$  up to numerical equivalence, denoted $N_k(X)$ ($N^k(X)$).
First, we generalize the effective cone of divisors.
Inside $N_k(X)$, the pseudoeffective cone $\overline{\text{Eff}}_k(X)$ is the closure of the cone generated by classes of $k$-dimensional subvarieties of X.

We now move to generalizations of the nef cone.
The \textit{nef} cone of cycles is motivated by the intersection pairing property of the nef cone with respect to effective curves.
Inside $N^k(X)$, $\text{Nef}^k(X)$ is defined, since $X$ is smooth, as the dual of $\overline{\text{Eff}}_k(X)$ with respect to the intersection pairing.
Similarly, the \textit{pliant} cone is motivated by analogy to the property that globally generated line bundles are nef.
Inside $N^k(X)$, $\text{Pl}^k(X)$ is defined as the closure of
the cone generated by products of classes, with total codimension k, which are each the pullback of an effective class from a map to a Grassmannian.

\subsection{Exceptional bundles}
In this subsection, we recall a special class of vector bundles and some of their properties.
For a more complete introduction to exceptional bundles, see \cite{DLP}.
An \textit{exceptional} bundle is a vector bundle $V$ such that $\mathrm{ext}^i(V,V) = \delta_{i,0}$.
We say that a slope $\mu$ is exceptional if there exists an exceptional bundle with that slope where the slope of a vector bundle $V$ is given by $\mu(V) = \frac{c_1(V)}{\text{rk}(V)}$.
For each exceptional slope $\alpha = \frac{c_1}{r}$ (where $c_1$ and $r$ are coprime) on $\mathbb{P}^2$ there exists a unique exceptional bundle $E_\alpha$ with $\Delta\left(E_\alpha\right) = \frac{1}{2}(1-\frac{1}{r^2})$ where for a vector bundle $V$ the discriminant $\Delta(V)$ is given by $\Delta(V) = \frac{1}{2}\mu(V)^2-\frac{\text{ch}_2(V)}{\text{rk}(V)}$.
The set of exceptional slopes, $\mathfrak{E}$, is well understood \cite{DLP}.
There exists a bijection $\epsilon:\mathbb{Z}\left[ \frac{1}{2}\right] \to \mathfrak{E}$ defined inductively by $\epsilon(n) = n$ and \[\epsilon \left( \frac{p}{2^{q}}\right) = \epsilon \left( \frac{p-1}{2^{q}}\right) . \epsilon \left( \frac{p+1}{2^{q}}\right)\] where, given two exceptional slopes $\alpha$ and $\beta$, we define \[\alpha . \beta = \frac{\alpha+\beta}{2} + \frac{\Delta_\beta-\Delta_\alpha}{3+\alpha-\beta}.\]
Note, $\epsilon\left( \frac{p}{2^q}\right) < \epsilon\left( \frac{r}{2^{s}}\right)$ iff $\frac{p}{2^q} <  \frac{r}{2^{s}}$.

On $\mathbb{P}^2$, exceptional bundles make up sets called \textit{(strong full) exceptional collections}. A (strong full) exceptional collection on $\mathbb{P}^2$ is a set of the form $\{E_\alpha,E_\beta,E_\gamma\}$ where those are exceptional bundles such that \[\mathrm{ext}^i(E_\beta,E_\alpha)= \mathrm{ext}^i(E_\gamma,E_\alpha) = \mathrm{ext}^i(E_\gamma,E_\beta) =0 \text{ for  all $i$ and } \]
\[\mathrm{ext}^i(E_\alpha,E_\beta)= \mathrm{ext}^i(E_\alpha,E_\gamma) = \mathrm{ext}^i(E_\beta,E_\gamma) =0 \text{ for $i \neq 0$.}\]
We abuse notation and refer to the exceptional collection as $\{\alpha, \beta, \gamma\}$.
Note that $\{\alpha, \beta, \gamma\}$ is an exceptional collection iff $\{\beta -3, \alpha, \beta\}$ is as well.
Every exceptional vector bundle on $\mathbb{P}^2$ can be written uniquely as the middle bundle in an exceptional collection of the form $\{\alpha, \alpha\cdot\beta,\beta \}$.
Every exceptional collection can be gotten from the exceptional collection $\{\mathcal{O},\mathcal{O}(1),\mathcal{O}(2)\}$ via a process called mutation. 
A mutation of exceptional collection $\{\alpha, \beta, \gamma\}$ is where you replace either $\alpha$ and $\beta$ (or $\beta$ and $\gamma$) with a mutation of the pair. 
A mutation of a pair $\alpha$ and $\beta$ is one of the pairs $\beta$, $\delta$ or $\zeta$, $\alpha$ where $E_\delta$ and $E_\zeta$ sit in the following short exact sequences:\[0 \to E_\alpha \to E_\beta^{\chi(E_\alpha,E_\beta)}\to E_\delta \to 0 \text{ or}\] \[0 \to E_\zeta \to E_\alpha^{\chi(E_\alpha,E_\beta)}\to E_\beta \to 0.\] 

Lastly, we want to introduce a property of (exceptional) vector bundles which we use to study the pliant cone later on.
A vector bundle $V$ of rank $r$ on a smooth projective variety $X$ is $(k-1)$\textit{-very ample} if $h^0(V \otimes I_Z) = h^0(V) - rk$ for all subschemes $Z$ of length $k$.

\section{Tautological classes of lines bundles}
In this section, we prove explicit formulas for the Chern (Schur) classes of tautological bundles on $\mathbb{P}^{2[3]}$ coming from line bundles. We first give the classes, then give the intuitive intersection numbers which determine those classes, and then finally give the transversality arguments proving that those intersections are correct.

\begin{thm}
The Chern classes of the tautological bundle $\mathcal{O}(d)^{[3]}$ in the MS basis are as follows:
\[c_1\left(\mathcal{O}(d)^{[3]}\right) = (d-2)H+F,\]
\[c_2\left(\mathcal{O}(d)^{[3]}\right) = A+(d-1)B+\binom{d-1}{2}C+(d-1)D\text{, and}\]
\[c_3\left(\mathcal{O}(d)^{[3]}\right)=  \binom{d}{3}W+2\binom{d}{2}X+dY.\]
Note, all higher Chern classes are zero as $\mathcal{O}(d)^{[3]}$ has rank 3.
\end{thm}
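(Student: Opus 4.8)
The strategy is to represent each $c_i\!\left(\mathcal{O}(d)^{[3]}\right)$ by the explicit effective cycle furnished by Equation \ref{eqn:geometric}, and then to read off its coordinates in the MS basis from its intersection numbers against the complementary–codimension MS classes, using that $\mathbb{P}^{2[3]}$ is smooth so the pairings recorded in Tables \ref{tab:divsandcurves}, \ref{tab:fourfoldsandsurfaces}, and \ref{tab:threefoldsandthreefolds} are perfect. Fix $d\ge 2$ (the cases $d\le 1$ being a direct check), and set $N=\binom{d+2}{2}$. For $1\le i\le 3$ the class $c_i\!\left(\mathcal{O}(d)^{[3]}\right)$ equals $[U_{W_i}]$, the locus of length-$3$ subschemes lying on some member of a general $(4-i)$-dimensional linear system $W_i$ of plane curves of degree $d$; taking $W_i$ to be the curves through $N-4+i$ general points, $W_3$ is a single general smooth curve $C$, so $c_3\!\left(\mathcal{O}(d)^{[3]}\right)=[C^{[3]}]$, an honest threefold. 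Since the three intersection matrices are nonsingular, it suffices to compute $c_1\cdot\varphi$, $c_1\cdot\psi$; the five numbers $c_2\cdot\alpha,\dots,c_2\cdot\epsilon$; and the six numbers $c_3\cdot U,\dots,c_3\cdot Z$, and then invert.

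The point that makes each of these tractable is that every MS curve, surface, or threefold, once its parameter space is identified, is a product of projective spaces assembled from Hilbert schemes of points on the fixed lines: for instance $\varphi\cong\psi\cong\mathbb{P}^1$, $\alpha$ is a hyperplane $\mathbb{P}^2\subset L^{[3]}$, $\delta\cong M^{[2]}\cong\mathbb{P}^2$, and $\beta,\gamma\cong\mathbb{P}^1\times\mathbb{P}^1$. On such a locus the parametrized subschemes have constant combinatorial type — some reduced isolated points fixed in $\mathbb{P}^2$, the rest of the length supported on a fixed line, with disjoint supports — so the fiber $H^0(\mathcal{O}_Z(d))$ splits according to the components of $Z$ and $\mathcal{O}(d)^{[3]}$ restricts to a direct sum of trivial line bundles (from the fixed points) and pullbacks of the tautological bundles $\mathcal{O}(d)^{[k]}$ on $\mathbb{P}^{1[k]}=\mathbb{P}^k$ for $k\le 3$. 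These last bundles are governed by the exact sequence
\[0\to\mathcal{O}_{\mathbb{P}^k}(-1)^{\oplus(d-k+1)}\to\mathcal{O}_{\mathbb{P}^k}^{\oplus(d+1)}\to\mathcal{O}(d)^{[k]}\to 0\]
obtained by pushing $0\to\mathcal{O}(-1,\,d-k)\to\mathcal{O}(0,\,d)\to\mathcal{O}_{\mathcal Z}(0,\,d)\to 0$ from $\mathbb{P}^k\times\mathbb{P}^1$ to $\mathbb{P}^k$ (with $\mathcal Z$ the incidence divisor of bidegree $(1,k)$), whence $c\!\left(\mathcal{O}(d)^{[k]}\right)=(1-h)^{-(d-k+1)}$. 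Feeding this in yields $c_1\cdot\varphi=d$, $c_1\cdot\psi=d-1$, and $(c_2\cdot\alpha,c_2\cdot\beta,c_2\cdot\gamma,c_2\cdot\delta)=\bigl(\binom{d-1}{2},\,d(d-1),\,d^2,\,\binom{d}{2}\bigr)$, while $c_2\cdot\epsilon=0$ because no member of a general pencil $W_2$ passes through the two general fixed points defining $\epsilon$, so $\epsilon\cap U_{W_2}=\emptyset$. For $c_3=[C^{[3]}]$, generality of $C$ forces it to avoid the fixed points defining $U$, $V$, and $Z$, so those three intersection numbers vanish, whereas $C^{[3]}\cap Y$, $C^{[3]}\cap X$, and $C^{[3]}\cap W$ are respectively the set of length-$3$ subschemes of the $d$ reduced points $C\cap L$, the products (length-$2$ subscheme of $C\cap L$)$\times(C\cap M)$, and the products $(C\cap L)\times(C\cap M)\times(C\cap N)$, of sizes $\binom{d}{3}$, $d\binom{d}{2}$, and $d^3$. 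Solving the three linear systems against the intersection tables returns exactly the claimed formulas; the vanishing of the higher Chern classes is immediate from $\operatorname{rk}\mathcal{O}(d)^{[3]}=3$.

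The genuinely delicate input — to which the rest of the section must be devoted — is transversality together with the scheme structure of the loci involved. One must verify that each MS locus is reduced with precisely the product structure used above, \emph{including along its non-reduced subschemes}; that the asserted splittings of $\mathcal{O}(d)^{[3]}$ restricted to these loci are genuine direct-sum decompositions of vector bundles over the entire locus, not merely generically; and that for a general $C$ the intersections $C^{[3]}\cap T$ are transverse of the expected dimension $0$, so each point contributes multiplicity $1$. The last is a Bertini/Kleiman-type statement: for general $C$ the intersections $C\cap L,\ C\cap M,\ C\cap N$ consist of distinct reduced points, the resulting length-$3$ subschemes lie in the smooth locus of the relevant threefolds, and $C^{[3]}$ meets those threefolds transversally there. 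Once these verifications are in hand, everything above is formal.
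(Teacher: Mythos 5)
Your overall strategy is the paper's: represent $c_i$ by the cycles of Equation \ref{eqn:geometric}, compute the intersection numbers against the complementary MS classes, and invert the (nonsingular) pairing tables; the numbers you list are the correct ones, and your computations for $\alpha$, $\delta$, $\epsilon$, $\varphi$, $\psi$, together with the point counts against $W$, $X$, $Y$, are sound or coincide with the paper's. The genuine gap is the mechanism you propose for $c_2\cdot\beta$ and $c_2\cdot\gamma$. The closed loci representing $\beta$ and $\gamma$ are \emph{not} products $\mathbb{P}^1\times\mathbb{P}^1$ carrying a split restriction of $\mathcal{O}(d)^{[3]}$: their closures contain schemes of the form $P$ together with a nonreduced length-two scheme supported at the node $L\cap M$ (resp.\ $M\cap N$), obtained as flat limits when the residual point on $L$ and the point on $M$ collide, and every tangent direction at the node occurs. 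Consequently the natural map $\mathbb{P}^1\times\mathbb{P}^1\dashrightarrow\mathbb{P}^{2[3]}$ has an indeterminacy point and one must pass to a blow-up; along the exceptional curve the decomposition of $H^0(\mathcal{O}_Z(d))$ by supports degenerates, and the ``extend by zero off the $L$-part'' identification does not extend (in a local trivialization a section with values $(b,0)$ at the two colliding points has coordinate of the form $b/t$, so the map acquires a pole). The pulled-back tautological bundle is therefore a priori only an elementary modification, along the exceptional curve, of the split bundle you use, and $c_2$ is not simply the product of the two first Chern classes: there is a possible correction term supported on that curve which must be shown to vanish. This is precisely the verification you defer as making everything ``formal,'' but it is not a formality — as stated (a genuine direct-sum decomposition over the entire locus) it is doubtful and in any case unproved — so the values $c_2\cdot\beta=d(d-1)$ and $c_2\cdot\gamma=d^2$ are not established by your argument.

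To close the gap you must either carry out the local analysis along the exceptional curve and show the modification does not affect the degree of $c_2$ (an effort comparable to a transversality computation), or do what the paper does: represent $c_2$ by the locus $\Theta_{2,d}$ of schemes lying on a member of a general pencil of degree-$d$ curves, observe that its set-theoretic intersections with the $\beta$- and $\gamma$-loci consist of reduced configurations away from the bad points, count them, and prove transversality by an explicit Jacobian/tangent-space computation in the charts $(\mathbb{A}^2)^3$; that route needs no global structure of the MS loci at all. A secondary, smaller issue: for $c_3$ you only assert transversality of $C^{[3]}$ with the representatives of $W$, $X$, $Y$; Kleiman's theorem does not literally apply (no group moves $C^{[3]}$ transitively), and what is needed is the direct tangent-space comparison the paper performs — the tangent space to $C^{[3]}$ at a reduced triple is the product of the tangent lines to $C$, which meets the product of the fixed lines transversally — short, but part of the proof rather than a citation.
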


Given the classes of the tautological Chern classes, we can compute the Schur classes using Schubert calculus. 
Recall, the Schur classes are defined from the Chern classes by the relations in Schubert calculus. 
Given that definition and the ring structure \cite{FG}, the following theorem is immediate.
These classes were computed with the aid of Macaulay2.

\begin{thm}
The non-zero Schur classes of the tautological bundle $\mathcal{O}(d)^{[3]}$ in the MS basis are as follows:
\begin{center}
\begin{tabular}{|c|c|}\hline
          Schur class        & Class for $\mathcal{O}(d)^{[3]}$\\ \hline
          $c_{1,1}$        & $2A+(d-2)B+\binom{d-2}{2}C +(3d-5)D+(d^2-2)E$ \\\hline 
          $c_{2,1}$        & $(d^3-\frac{5}{2}d^2-\frac{1}{2}d+3)U+(d^2-2)V+(\frac{1}{3}d^3-2d^2+\frac{11}{3}d-2)W$\\
                           & $+(4d^2-11d+7)X+(8d-9)Y+(\frac{3}{2}d^2-\frac{3}{2}d-1)Z$ \\\hline
          $c_{1,1,1}$      & $(d^3-4d^2+d+6)U+(d^2-4)V+(\frac{1}{6}d^3-\frac{3}{2}d^2+\frac{13}{3}d-4)W$\\
                           & $+(3d^2-13d+14)X+(10d-18)Y+(3d^2-6d+1)Z$ \\\hline
          $c_{3,1}$        & $(2d^2-3d)\alpha+(\frac{3}{2}d^3-\frac{7}{2}d^2+2)\beta+(\frac{1}{2}d^4-2d^3+\frac{5}{2}d^2-d)\gamma+(d^3-2d^2+d)\delta$ \\\hline      
          $c_{2,2}$        & $(4d^2-9d+5)\alpha+(\frac{3}{2}d^3-\frac{11}{2}d^2+6d-2)\beta+(\frac{1}{2}d^4-3d^3+\frac{11}{2}d^2-3d)\gamma$\\
                           & $+(3d^3-7d^2+3d+1)\delta+(\frac{1}{2}d^4-\frac{3}{2}d^2+1)\epsilon$ \\\hline
          $c_{2,1,1}$      & $(10d^2-27d+16)\alpha+(\frac{9}{2}d^3-\frac{35}{2}d^2+21d-7)\beta+(d^4-\frac{13}{2}d^3+\frac{27}{2}d^2-9d)\gamma$\\
                           & $+(4d^3-13d^2+9d+2)\delta+(\frac{1}{2}d^4-3d^2+\frac{3}{2}d+2)\epsilon$ \\\hline
          $c_{1,1,1,1}$    & $(10d^2-36d+32)\alpha+(3d^3-16d^2+27d-14)\beta+(\frac{1}{2}d^4-\frac{9}{2}d^3+13d^2-12d)\gamma$\\
                           & $+(3d^3-13d^2+12d+4)\delta+(\frac{1}{2}d^4-6d^2+\frac{15}{2}d+1)\epsilon$ \\\hline
          $c_{3,2}$        & $(\frac{1}{2}d^5-\frac{3}{2}d^4+3d^2-2d)\phi+(\frac{3}{2}d^4-\frac{3}{2}d^3-3d^2+3d)\psi$ \\\hline
          $c_{3,1,1}$      & $(\frac{1}{2}d^5-\frac{3}{2}d^4-\frac{3}{2}d^3+\frac{17}{2}d^2-7d)\phi+(\frac{3}{2}d^4-\frac{3}{2}d^3-7d^2+9d)\psi$ \\\hline
          $c_{2,2,1}$      & $(d^5-\frac{9}{2}d^4+\frac{45}{2}d^2-31d+12)\phi+(\frac{9}{2}d^4-\frac{15}{2}d^3-18d^2+42d-21)\psi$ \\\hline
          $c_{2,1^3}$      & $(d^5-\frac{9}{2}d^4-\frac{9}{2}d^3+48d^2-73d+30)\phi+(\frac{9}{2}d^4-\frac{15}{2}d^3-36d^2+90d-48)\psi$ \\\hline
          $c_{1^5}$        & $(\frac{1}{2}d^5-3d^4-3d^3+\frac{99}{2}d^2-101d+60)\phi+(3d^4-6d^3-39d^2+126d-96)\psi$ \\\hline
          $c_{3,3}$        & $\frac{1}{6}d^6-\frac{1}{2}d^4+\frac{1}{3}d^2$ \\\hline
          $c_{3,2,1}$      & $\frac{1}{3}d^6-\frac{5}{2}d^4+\frac{3}{2}d^3+\frac{11}{3}d^2-3d$ \\\hline
          $c_{3,1^3}$      & $\frac{1}{6}d^6-2d^4+\frac{3}{2}d^3+\frac{16}{3}d^2-6d$ \\\hline
          $c_{2^3}$        & $\frac{1}{6}d^6-2d^4+\frac{3}{2}d^3+\frac{22}{3}d^2-12d+5$ \\\hline
          $c_{2,2,1,1}$    & $\frac{1}{2}d^6-\frac{15}{2}d^4+9d^3+18d^2-36d+16$ \\\hline
          $c_{2,1^4}$      & $\frac{1}{3}d^6-7d^4+9d^3+\frac{89}{3}d^2-66d+32$ \\\hline
          $c_{1^6}$        & $\frac{1}{6}d^6-5d^4+\frac{15}{2}d^3+\frac{103}{3}d^2-96d+64$ \\\hline
\end{tabular}
\end{center}
\end{thm}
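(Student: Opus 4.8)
The plan is to reduce the statement to the Giambelli determinantal description of Schubert classes followed by linear algebra over $\mathbb{Q}[d]$. By construction $\mathcal{O}(d)^{[3]} = \varphi_d^*(\mathcal{Q})$ with $c_i(\mathcal{Q}) = \sigma_{(i)}$, and the Giambelli formula expresses an arbitrary Schubert class as $\sigma_\lambda = \det\big(\sigma_{(\lambda_i - i + j)}\big)_{1 \le i,j \le \ell(\lambda)}$, with the conventions $\sigma_{(0)} = 1$ and $\sigma_{(m)} = 0$ for $m < 0$. Pulling back along $\varphi_d$ and using that $\mathcal{O}(d)^{[3]}$ has rank $3$ -- so that $c_m(\mathcal{O}(d)^{[3]}) = 0$ for $m \ge 4$ as well, which in particular forces $\lambda_1 \le 3$ -- gives
\[ c_\lambda\big(\mathcal{O}(d)^{[3]}\big) = \det\Big( c_{\lambda_i - i + j}\big(\mathcal{O}(d)^{[3]}\big)\Big)_{1 \le i,j \le \ell(\lambda)}, \]
a universal polynomial with integer coefficients in $c_1, c_2, c_3$. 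Substituting the expressions for $c_1, c_2, c_3$ from the previous theorem turns the right-hand side into a polynomial, with coefficients in $\mathbb{Q}[d]$, in the MS classes of codimensions $1$, $2$ and $3$.

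Next I would pin down which partitions occur and carry out the reduction. A nonzero $c_\lambda(\mathcal{O}(d)^{[3]})$ requires all parts of $\lambda$ at most $3$ and $|\lambda| \le \dim \mathbb{P}^{2[3]} = 6$; discarding $|\lambda| \le 1$ and the one-row partitions $(2),(3)$ already handled by the previous theorem leaves precisely the finite list in the table. For each such $\lambda$, expand the Giambelli determinant into a sum of monomials $c_1^{a}c_2^{b}c_3^{c}$ with $a + 2b + 3c = |\lambda|$ and rewrite the resulting products of MS classes back in the MS basis using the ring structure of $A^*(\mathbb{P}^{2[3]})$. That ring structure may be quoted from \cite{FG} (see also \cite{MS,ME}); alternatively, since $\mathbb{P}^{2[3]}$ is smooth and the intersection pairing is perfect, it can be recovered from the data already recorded here -- a class in $A^2$ (resp. $A^3$) is determined by its pairings against $\alpha,\dots,\epsilon$ (resp. $U,\dots,Z$), which one reads off after inverting the matrices of Tables~\ref{tab:fourfoldsandsurfaces} and \ref{tab:threefoldsandthreefolds}, and classes in $A^6$ are multiples of the point class. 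All of this is routine polynomial arithmetic in $\mathbb{Q}[d]$ and is best done, as indicated, with Macaulay2.

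The single genuine pitfall here is bookkeeping, not mathematics. One must use Giambelli with the convention compatible with $c_i(\mathcal{Q}) = \sigma_{(i)}$ -- not the ``Schur polynomial in the Chern roots'' convention, under which $c_{1,1}$ would collapse to $c_2$ -- and one needs a complete, correct multiplication table for the MS basis up to total codimension $6$; a single sign or index slip would contaminate many entries. To insulate against this I would bake in independent checks: the identities $c_{1,1} = c_1^2 - c_2$ and $c_{1,1,1} = c_1^3 - 2c_1c_2 + c_3$ (immediate from Giambelli) must reproduce the first rows of the table after reduction; the codimension-$6$ entries $c_{3,3}, c_{3,2,1},\dots,c_{1^6}$ are Chern numbers of $\mathcal{O}(d)^{[3]}$ and must match the values coming from the known top Segre/Chern class formulas for tautological bundles of line bundles \cite{Le,MOP,V}; and several of the intermediate Schur classes can be cross-checked against the geometric descriptions of Section~\ref{sec:geometricdescriptions} and the intersection numbers of Tables~\ref{tab:divsandcurves}--\ref{tab:threefoldsandthreefolds}. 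Since these hold for all $d$ simultaneously, passing them all makes an error very unlikely, and with the checks in place the theorem follows.
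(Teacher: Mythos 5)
Your proposal is correct and is essentially the paper's own argument: the paper likewise defines the Schur classes from the Chern classes via the Schubert-calculus (Giambelli) relations, substitutes the formulas for $c_1,c_2,c_3\bigl(\mathcal{O}(d)^{[3]}\bigr)$ from the preceding theorem, and reduces in the MS basis using the known ring structure of $A^*\bigl(\mathbb{P}^{2[3]}\bigr)$ with a Macaulay2 computation. The convention warning and the cross-checks you add are sensible safeguards but not a different method.
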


We would be remiss if we did not draw the reader's attention to a pattern that emerges from this chart. The coefficients of each Schur class are polynomials in the class of the original line bundle. The degree of that polynomial at most the codimension of the Schur class. 

\begin{conj}
Let $S$ be a smooth connected surface with $h^1(O_S)=0$ whose Picard group is generated by line bundles $\mathcal{L}_1$, $\cdots$, $\mathcal{L}_m$. Consider a line bundle $\mathcal{L} = \mathcal{L}_1^{a_1} \otimes \cdots \otimes \mathcal{L}_k^{a_m}$. Then the coefficients of any Schur class of $\mathcal{L}^{[n]}$ of codimension $k$ (in any basis for the Chow ring of $S^{[n]}$) are polynomials of degree at most $k$ in the $a_i$.
\end{conj}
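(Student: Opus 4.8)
\medskip\noindent\textbf{Proof proposal.} The plan is to reduce the statement to Grothendieck--Riemann--Roch for a smooth projection, together with the elementary observation that $\operatorname{ch}(\mathcal{L})$ depends polynomially, and with bounded degree, on the exponents $a_i$. Let $\mathcal{Z}\subseteq S^{[n]}\times S$ be the universal family, with inclusion $\iota:\mathcal{Z}\hookrightarrow S^{[n]}\times S$, so that $p=\pi_1\circ\iota$ and $q=\pi_2\circ\iota$ for the two projections of the product. Since $p$ is finite, $R^{>0}p_*(q^*\mathcal{L})=0$, hence $\mathcal{L}^{[n]}=p_*(q^*\mathcal{L})$ and $\operatorname{ch}(\mathcal{L}^{[n]})=\operatorname{ch}(Rp_*(q^*\mathcal{L}))$; by the projection formula for the closed immersion $\iota$ one has $Rp_*(q^*\mathcal{L})\cong R\pi_{1*}(\iota_*\mathcal{O}_{\mathcal{Z}}\otimes\pi_2^*\mathcal{L})$, and GRR for the smooth projective morphism $\pi_1$, whose relative tangent bundle is $\pi_2^*T_S$, yields
\[
\operatorname{ch}\!\left(\mathcal{L}^{[n]}\right)=\pi_{1*}\!\left(\pi_2^*\operatorname{ch}(\mathcal{L})\cdot\tau\right),\qquad \tau:=\operatorname{ch}\!\left(\iota_*\mathcal{O}_{\mathcal{Z}}\right)\cdot\pi_2^*\operatorname{td}(T_S)\in A^*\!\left(S^{[n]}\times S\right)_{\mathbb{Q}},
\]
with $\tau$ a fixed class independent of $\mathcal{L}$; this makes sense without any smoothness of $\mathcal{Z}$, since $\iota_*\mathcal{O}_{\mathcal{Z}}$ is a coherent sheaf on the smooth variety $S^{[n]}\times S$. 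The hypothesis $h^1(\mathcal{O}_S)=0$ is used only here: it guarantees $\operatorname{Pic}(S)=\operatorname{NS}(S)$ is finitely generated by the $\mathcal{L}_i$, so that rationally $c_1(\mathcal{L})=\sum_i a_i\,c_1(\mathcal{L}_i)$ with the $a_i$ literally the exponents.

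Next I would extract the degree bound on Chern characters. Because $S$ is a surface, the powers $c_1(\mathcal{L})^j$ vanish for $j\geq3$, so the codimension-$j$ part of $\pi_2^*\operatorname{ch}(\mathcal{L})$ is homogeneous of degree $j$ in the $a_i$ for $j\in\{0,1,2\}$ and vanishes for $j>2$; moreover, since $\mathcal{Z}$ has codimension $2$ in $S^{[n]}\times S$, the rank-$0$ term of $\operatorname{ch}(\iota_*\mathcal{O}_{\mathcal{Z}})$ vanishes, so $\tau$ has no codimension-$0$ term. Taking the codimension-$(j+2)$ part of $\pi_2^*\operatorname{ch}(\mathcal{L})\cdot\tau$ and applying $\pi_{1*}$ (which lowers codimension by $2$), one sees that $\operatorname{ch}_j(\mathcal{L}^{[n]})\in A^j(S^{[n]})_{\mathbb{Q}}$ is a sum of terms, each of which is $\pi_{1*}$ of a product of a single factor $\pi_2^*\operatorname{ch}_i(\mathcal{L})$, with $0\leq i\leq\min(j,2)$, and an $a_i$-independent class. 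Hence, in any fixed basis, the coordinates of $\operatorname{ch}_j(\mathcal{L}^{[n]})$ are polynomials of degree $\leq\min(j,2)$ in the $a_i$.

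Finally I would feed this into the Schur classes. A Schur class $c_\lambda(\mathcal{L}^{[n]})$ of codimension $k=|\lambda|$ is the Schur polynomial $s_\lambda$ evaluated on the Chern classes $c_i(\mathcal{L}^{[n]})$, and $s_\lambda$ is weighted-homogeneous of weight $k$ when $c_i$ is assigned weight $i$; since each $c_i$ is in turn a weighted-homogeneous polynomial of weight $i$ in $\operatorname{ch}_1(\mathcal{L}^{[n]}),\ldots,\operatorname{ch}_i(\mathcal{L}^{[n]})$ (Newton's identities), $c_\lambda(\mathcal{L}^{[n]})$ is a $\mathbb{Q}$-linear combination of monomials $\operatorname{ch}_{i_1}(\mathcal{L}^{[n]})\cdots\operatorname{ch}_{i_r}(\mathcal{L}^{[n]})$ with $i_1+\cdots+i_r=k$ and each $i_s\geq1$. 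By the previous paragraph, such a monomial contributes coordinates of $a_i$-degree at most $\sum_s\min(i_s,2)\leq\sum_s i_s=k$, so the coordinates of $c_\lambda(\mathcal{L}^{[n]})$ are polynomials of degree $\leq k$ in the $a_i$. All these classes, as the $a_i$ vary, lie in one fixed finite-dimensional $\mathbb{Q}$-subspace of $A^k(S^{[n]})_{\mathbb{Q}}$, and a change of basis is a fixed $\mathbb{Q}$-linear map, so the bound is basis-independent. The extreme case $\lambda=(1^k)$, where every $i_s=1$, is precisely where degree $k$ can be attained, matching for instance the $d^6$-terms of $c_{1^6}(\mathcal{O}(d)^{[3]})$ and $c_{3,3}(\mathcal{O}(d)^{[3]})$ in the table above.

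The geometric input is completely standard GRR, so I expect the only genuine subtlety to be the bookkeeping of the last two paragraphs. One must confirm that expanding a weight-$k$ Schur polynomial in the Chern characters never produces a monomial of weight exceeding $k$, and one must use the sharpened per-factor bound $\min(i_s,2)$ rather than the naive bound $2$ (which would only give $\leq2k$), so both the vanishing of $c_1(\mathcal{L})^{j}$ for $j\geq3$ and the fact that $\tau$ starts in codimension $2$ are genuinely needed. One should also check that the argument still makes sense when $A^k(S^{[n]})$ is infinite-dimensional, by passing to numerical equivalence; the finiteness remark above handles this.
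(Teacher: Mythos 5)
The statement you are addressing is left as a conjecture in the paper: the authors give no proof, only the evidence of the explicit table of Schur classes of $\mathcal{O}(d)^{[3]}$, so there is no argument of theirs to compare against and your GRR route is new relative to the text. The skeleton is sound: writing $\operatorname{ch}(\mathcal{L}^{[n]})=\pi_{1*}(\pi_2^*\operatorname{ch}(\mathcal{L})\cdot\tau)$ with $\tau$ independent of $\mathcal{L}$, bounding the $a_i$-degree of $\operatorname{ch}_j(\mathcal{L}^{[n]})$ by $\min(j,2)$, and then using weighted homogeneity of the Giambelli/Newton expansions gives exactly the degree-$\leq k$ bound, and reducing ``any basis'' to a fixed finite-dimensional subspace (or to numerical equivalence) is the right way to make the statement robust. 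You do implicitly need $S$ projective for GRR along $\pi_1$ and for the basis language to be meaningful, which is surely the intended setting and worth saying explicitly.

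The one place where your written justification does not yet support what you use is the claim $0\le i\le\min(j,2)$: for that you need $\tau$ to vanish in codimensions $0$ \emph{and} $1$, i.e.\ $\operatorname{ch}_0(\iota_*\mathcal{O}_{\mathcal{Z}})=\operatorname{ch}_1(\iota_*\mathcal{O}_{\mathcal{Z}})=0$, but in the body you only argue that the rank ($\operatorname{ch}_0$) vanishes. With codimension-$0$ vanishing alone you would only get $i\le\min(j+1,2)$, which would allow $\operatorname{ch}_1(\mathcal{L}^{[n]})$ to be quadratic in the $a_i$ and would ruin the count for monomials such as $\operatorname{ch}_1^k$ coming from $\lambda=(1^k)$ --- so, as you note at the end, the sharpened per-factor bound $\min(i_s,2)$ really does require $\tau$ to start in codimension $2$. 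Fortunately that stronger vanishing is true and standard: $\iota_*\mathcal{O}_{\mathcal{Z}}$ is supported in codimension $2$, so its determinant is trivial and hence $c_1$, equivalently $\operatorname{ch}_1$, vanishes; more generally the Chern character of a class supported in codimension $\ge 2$ has no rational components in codimension $<2$. Insert that one justification (and the projectivity hypothesis) and I see no remaining gap: your argument would in fact establish the conjecture, which is more than the paper itself does.
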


In the case where $S$ is the projective plane, we make a stronger conjecture giving the exact class in the MS basis of the Chern classes of tautological bundles.

\begin{conj}
Using the notation of Mallavibarrena and Sols \cite{MS}, 
we have that 
\[c_i\left(\mathcal{O}(d)^{[n]}\right) = \sum \delta_{0,a_1}\binom{d-(n-i)}{f_1+\cdots+f_m}\binom{f_1+\cdots+f_m}{f_1;\cdots;f_m} \sigma_{(a_1^{d_1},\cdots,a_k^{d_k}),(b_1^{e_1},\cdots,b_l^{e_l}),(c_1^{f_1},\cdots,c_k^{f_m})}\]
\end{conj}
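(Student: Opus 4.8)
The plan is to prove the identity one coefficient at a time: for each element $\tau$ of the MS basis, viewed through the intersection pairing as a member of the dual basis, compute $\int_{\mathbb{P}^{2[n]}} c_i(\mathcal{O}(d)^{[n]})\cdot\tau$ and match it with the coefficient the right-hand side assigns to $\tau$. By the geometric description in Equation \ref{eqn:geometric}, $c_i(\mathcal{O}(d)^{[n]})$ is represented by the locus $U_W$ of length-$n$ schemes lying on some member of a general linear system $W$ of degree-$d$ plane curves of the appropriate dimension, which one may take to be the curves through a suitable number of general fixed points. Since the MS basis classes are themselves cut out by incidence and containment conditions against general lines and points (Section \ref{sec:msbasis} for $n=3$, and \cite{MS} in general), each such number is a finite enumerative count, and the first task is bookkeeping: fix once and for all a bijection between the combinatorial data indexing the MS basis --- the collision-and-incidence pattern of the $n$ moving points --- and the triples of partitions $\left((a_1^{d_1},\dots),(b_1^{e_1},\dots),(c_1^{f_1},\dots)\right)$ appearing on the right-hand side, so that $\delta_{0,a_1}$ becomes the geometric statement that the relevant member of $W$ would be forced to contain one of the fixed lines (hence contributes $0$ unless $a_1=0$), and $\sum f_j$ counts the points of the configuration genuinely free to be absorbed by the general curve.

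Second, I would establish polynomiality in $d$ as a separate input, since it reduces each coefficient to a finite interpolation problem. Grothendieck--Riemann--Roch applied to the finite flat morphism $p:\mathcal{Z}\to\mathbb{P}^{2[n]}$ expresses $\mathrm{ch}(\mathcal{O}(d)^{[n]})$ as the pushforward along $p$ of $\mathrm{ch}(q^*\mathcal{O}(d)) = e^{d\,q^*h}$ times a correction class independent of $d$; since $h^3=0$ on $\mathbb{P}^2$ this is a polynomial of degree at most $2$ in $d$ times a fixed class, so every graded piece of $\mathrm{ch}(\mathcal{O}(d)^{[n]})$ is a polynomial in $d$ of degree $\le 2$. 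The sharper bound ``degree $\le$ codimension'' for the Chern and Schur classes is then the assertion that the higher-degree-in-$d$ terms cancel upon passing from $\mathrm{ch}$ to $c$, which I expect to follow from the Ellingsrud--G\"ottsche--Lehn universality of tautological integrals on Hilbert schemes of points, the codimension-$k$ part being governed to leading order by $c_1(\mathcal{O}(d))^k/k!$. Alternatively one can drop the sharp bound and simply allow more interpolation nodes.

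Third, with polynomiality secured, I would compute the finitely many required numbers either by degenerating to an explicit sufficiently general configuration of lines and points, or, equivalently, by torus localization on $\mathbb{P}^{2[n]}$: $\mathcal{O}(d)^{[n]}$ is equivariant with fiber $H^0(\mathcal{O}_Z(d))$ of explicitly known weights at each monomial-ideal fixed point, the Chow ring is generated by the Bialynicki--Birula classes \cite{BB}, and the MS basis expands in those. The problem then collapses to one combinatorial identity --- that the resulting weighted fixed-point count equals $\binom{d-(n-i)}{\sum f_j}\binom{\sum f_j}{f_1;\cdots;f_m}$ --- which I expect to be a Vandermonde/multinomial identity amenable to a generating-function manipulation, the shift $d-(n-i)$ emerging from the number of points already pinned by the incidence conditions defining $U_W$.

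The hard part will be the transversality and excess-intersection bookkeeping for general $n$. For small $n$ one checks by hand (as is done above for $n=3$) that the naive counts are honest intersection numbers; in general $U_W$ and the MS loci can fail to meet dimensionally transversally along boundary strata --- non-reduced schemes, collinear collisions of the moving points, members of $W$ degenerating to contain a fixed line --- so one must either prove a uniform transversality statement, e.g.\ via a Kleiman-type argument after realizing enough of the moving data as a $\mathrm{PGL}_3$-orbit on a suitable incidence space, or identify and cancel the excess contributions. A more prosaic but real obstacle is that the displayed right-hand side is schematic: the summation range and the precise correspondence between $i$ and the partition data are not written out, so a genuine proof must begin by pinning down the exact statement and checking it against the $n=3$ tables above and the classical $n=1,2$ cases.
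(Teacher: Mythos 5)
The statement you are trying to prove is not proved in the paper at all: it is stated as a \emph{conjecture} (and even the weaker degree-bound statement preceding it is a separate conjecture), so there is no proof of record to compare against, and your text should be judged as a standalone argument. As such, it is a reasonable research plan but not a proof, and the gaps it leaves open are precisely the content of the conjecture. First, you yourself note that the right-hand side is schematic --- the summation range, the relation between $i$, $n$ and the partition triples, and the role of $\delta_{0,a_1}$ are never pinned down --- and your proposed fix is to ``begin by pinning down the exact statement,'' which is deferred rather than done; until that bijection between MS basis indices and the data $\bigl((a_1^{d_1},\dots),(b_1^{e_1},\dots),(c_1^{f_1},\dots)\bigr)$ is constructed and checked, there is no well-defined identity to verify. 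Second, the two mathematical pivots of your plan are both asserted on expectation: the combinatorial identity that the localization or degeneration count collapses to $\binom{d-(n-i)}{\sum f_j}\binom{\sum f_j}{f_1;\cdots;f_m}$ is ``expected to be a Vandermonde/multinomial identity,'' and the transversality/excess-intersection control for general $n$ is acknowledged as ``the hard part'' without a mechanism. The paper's own method for $n=3$ (representing $c_i(\mathcal{O}(d)^{[3]})$ by the locus $U_W$ of Equation \ref{eqn:geometric}, intersecting with the MS loci, and checking transversality by hand in local charts at reduced points) does not extend automatically, because for larger $n$ the intersections with MS strata involving non-reduced or collinear configurations can meet $U_W$ in excess dimension, and a Kleiman-type argument needs the relevant loci to be moved by a group acting with controlled orbit structure, which is exactly what must be proved.

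One smaller technical caution: your Grothendieck--Riemann--Roch argument shows each graded piece of $\mathrm{ch}(\mathcal{O}(d)^{[n]})$ is a polynomial of degree at most $2$ in $d$, but passing to $c_i$ multiplies such pieces together, so a priori the coefficient polynomials have degree up to $2i$ rather than the degree-$\le i$ bound you want; as you say, interpolation still works with more nodes, but the sharper bound you invoke via Ellingsrud--G\"ottsche--Lehn universality is itself the first conjecture of the paper and cannot be cited as known. In short: the strategy (dual-basis intersection numbers, polynomiality plus interpolation, localization or degeneration for the counts) is sensible and consistent with how the $n=3$ computations in the paper are carried out, but every step beyond $n=3$ that would turn the conjecture into a theorem is left unestablished.
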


\subsection{Intersection Numbers with the MS Basis}
\label{sec:msbasis}
In order to prove the formulas for the Chern classes, it suffices to compute the intersection number of each class with the MS basis classes of complementary codimension. We count the number of points in each set theoretic intersection given general representatives, and then show that the intersection at each such point is transverse.

\begin{prop}
The intersection numbers of $c_1(\mathcal{O}(d)^{[3]})$, $c_2(\mathcal{O}(d)^{[3]})$, and $c_3(\mathcal{O}(d)^{[3]})$ with the classes of complementary codimension in the MS basis are contained in the following tables.
\begin{center}
\begin{tabular}{c|cc}
                             & $\varphi$ & $\psi$ \\ \hline
$c_1\left(\mathcal{O}(d)^{[3]}\right)$   &  $d$  &  $d-1$
\end{tabular}
\end{center}
\begin{center}
\begin{tabular}{c|ccccc}
                              & $\alpha$ & $\beta$ & $\gamma$ & $\delta$ & $\epsilon$ \\ \hline
$c_2\left(\mathcal{O}(d)^{[3]}\right)$    &  $\binom{d-1}{2}$  &  $d(d-1)$  &  $d^2$  &  $\binom{d}{2}$  &  $0$
\end{tabular}
\end{center}
\begin{center}
\begin{tabular}{c|cccccc}
                             & $U$ & $V$ & $W$ & $X$ & $Y$ & $Z$ \\ \hline
$c_3\left(\mathcal{O}(d)^{[3]}\right)$   &  $0$  &  $0$  &  $d^3$  &  $d\binom{d}{2}$  &  $\binom{d}{3}$  &  $0$
\end{tabular}
\end{center}
\end{prop}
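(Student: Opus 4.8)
The plan is to exploit the geometric description of the tautological Chern classes from Section~\ref{sec:geometricdescriptions}. By Equation~\ref{eqn:geometric}, for $d\geq 2$ the class $c_i(\mathcal{O}(d)^{[3]})$ is represented by the locus $U_W$ of length-three subschemes of $\mathbb{P}^2$ lying on some member of a $(4-i)$-dimensional linear system $W$ of degree-$d$ plane curves, and we may take $W$ to be the system of curves through $N-4+i$ general points, where $N=\binom{d+2}{2}$. Thus $c_1$ corresponds to a net of curves ($W$ through $N-3$ points), $c_2$ to a pencil ($W$ through $N-2$ points), and $c_3$ to a single general curve $C$ of degree $d$ (the unique curve through $N-1$ general points), so that $U_W$ for $c_3$ is simply the locus of subschemes contained in $C$. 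For each MS basis class $T$ of complementary codimension I would pick a general representative $\Gamma_T$ using its own general points and lines, and compute $U_W\cap\Gamma_T$ by reducing the condition $Z\in U_W\cap\Gamma_T$ to a configuration count in $\mathbb{P}^2$.

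The counting step rests on one recurring observation: a member of $W$ constrained to pass through additional general points exists and is unique precisely when the total number of imposed points is $N-1$, and that curve then meets a general line transversally in $d$ points and contains no length-two subscheme of that line. For instance, a scheme in $\psi$ has the form $\{P,x,Q\}$ with $x\in L$, and it lies on a member of the net $W$ iff $x$ lies on the unique curve $C_1\in W$ through $P$ and $Q$; since $P\in C_1\cap L$, this gives $x\in(C_1\cap L)\setminus\{P\}$, i.e.\ $d-1$ points, matching $c_1\cdot\psi=d-1$. Similarly a scheme in $\gamma$ is $\{P,x,y\}$ with $x\in M$, $y\in N$, lying on a member of the pencil $W$ iff both lie on the unique pencil-member through $P$, giving $d\cdot d=d^2$; and a scheme in $Y$ is contained in $L$, lying on $C$ iff it consists of three of the $d$ points of $C\cap L$, giving $\binom{d}{3}$. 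The vanishing entries ($\epsilon$ for $c_2$; $U$, $V$, $Z$ for $c_3$) all follow because the representative forces a general point to lie on $C$, or forces a member of $W$ through $N$ general points to exist, neither of which happens. The entries for $\varphi$, for $\alpha,\beta,\delta$, and for $W,X$ are obtained identically, the binomial coefficients arising from choosing points among the $d$ points in which $C$ (or the relevant unique curve) meets an auxiliary line.

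The substantive part of the argument is transversality: each point of $U_W\cap\Gamma_T$ must be a reduced point of the intersection. For general choices, every subscheme $Z$ contributing to the intersection is reduced with three distinct points --- the punctual and specially-collinear degenerations contained in the closures of the families $\alpha,\beta,\gamma$ and $X,Y$ do not meet $U_W\cap\Gamma_T$, because $C$ (respectively the auxiliary unique curve) meets each auxiliary line transversally and so carries no length-two subscheme of it --- hence $T_Z\mathbb{P}^{2[3]}=\bigoplus_j T_{p_j}\mathbb{P}^2$ and the incidence conditions cutting out $U_W$ and $\Gamma_T$ decouple point by point. Transversality then reduces to the corresponding statements on $\mathbb{P}^2$: a general curve (respectively, the curve pinned down by $N-1$ general points) is smooth along, and meets transversally, a general line, and general points impose independent conditions on $W$. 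I expect the main obstacle to be making these general-position inputs rigorous \emph{uniformly in $d\geq 2$} --- verifying that for every $d\geq2$ the system $W$ through $N-4+i$ general points has the expected dimension with base locus exactly those points, that the curve determined by $N-1$ general points is reduced and meets the auxiliary lines transversally, and that the degenerate boundary strata of the surface and threefold families genuinely avoid the intersection. The small cases (notably $d=2$, where $\varphi_2$ is only a divisorial contraction, and $d=3,4$) can be handled by direct inspection if the general argument requires $d$ large.
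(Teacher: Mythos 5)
Your overall strategy is the same as the paper's: represent $c_i(\mathcal{O}(d)^{[3]})$ by the locus of schemes lying on some member of a $(4-i)$-dimensional system of degree-$d$ curves, count the set-theoretic intersection with general MS representatives (your counts, including the vanishing entries, agree with the paper's), and then check transversality at the resulting reduced points. For $c_3$ your transversality argument is exactly the paper's: there the system is a single curve $C_d$, the condition $Z\subset C_d$ genuinely is imposed point by point, and transversality reduces to $C_d$ meeting the fixed lines transversally. (Minor remarks: the paper simply quotes the $c_1$ row from \cite{ABCH}, and no large-$d$ or case-by-case treatment is needed --- the argument below works uniformly for $d\geq 2$.)

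The gap is in the transversality step for $c_2$ (and, if you reprove it, $c_1$). When $W$ is a pencil spanned by $C_1,C_2$, the condition cutting out $\Theta_{2,d}$ at a reduced $Z=\{z_1,z_2,z_3\}$ is that the $2\times 3$ matrix $\bigl(C_k(z_j)\bigr)$ drop rank, i.e.\ that \emph{some} member of the pencil pass through all three points simultaneously; this is a determinantal condition coupling the three points, and your claim that ``the incidence conditions cutting out $U_W$ \dots decouple point by point'' is false here. Concretely, $\Theta_{2,d}$ is $4$-dimensional, one more than $\bigoplus_j T_{z_j}C$: the extra tangent direction comes from varying the member of the pencil, which moves the constraint at all three points at once, so transversality does not automatically reduce to statements about a single fixed curve on $\mathbb{P}^2$. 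The paper handles this by an explicit computation in the chart $(\mathbb{A}^2)^3$: it writes $\Theta_{2,d}$ by the equations $f_{ij}=C_1(z_i)C_2(z_j)-C_1(z_j)C_2(z_i)$, takes the Jacobian, and uses that every MS surface representative forces $Z$ to contain a fixed general point (say $z_1=P_2$) at which $C_1(P_2)\neq 0$; imposing the tangent conditions $x_1=y_1=0$ from the representative then collapses the coupled equations to the tangent directions of the unique pencil member $C_2$ through $P_2$ at $z_2$ and $z_3$, after which transversality with the line conditions follows from $C_2$ meeting $L_1,L_2$ transversally, as you say. Your proof needs this mechanism (the fixed point pinning down the pencil member to first order) or an equivalent argument; as written, the decoupling step would fail for the pencil and net cases.
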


The intersection numbers for the first Chern class were computed in \cite{ABCH}, and we omit the calculation here.

\subsubsection{The second Chern class}
Recall that $c_2(\mathcal{O}(d)^{[3]})$ has a geometric representative $\Theta_{2,d}$ defined as follows. Set $N = \frac{(d+2)(d-1)}{2}$, and let $P$ be the two dimensional vector subspace of homogeneous polynomials in $k[\mathbb{P}^2]$ of degree $d$ defined as the polynomials which vanish on $N-2$ general points in $\mathbb{P}^2$. Let $\Theta_{2,d}$ be the locus of schemes $Z \in \mathbb{P}^{2[3]}$ such that some member $p(Z) = 0$ for some $p \in P$. The class $[\Theta_{2,d}] = c_2(\mathcal{O}(d)^{[3]})$. See Section \ref{sec:geometricdescriptions}. 

A general representative of the class $\epsilon$ is given as the locus of schemes containing two fixed general points in $\mathbb{P}^2$. Its intersection with $\Theta_{2,d}$ is therefore empty since there are no polynomials of degree $d$ vanishing on $N$ general points. We thus conclude $\epsilon \cdot c_2(\mathcal{O}(d)^{[3]}) = 0$. 

For the remainder of the intersections, fix two general lines $L_1$ and $L_2$, a general point $P_1$ in $L_1$, and a general point $P_2$. Let $U_\alpha$ be the locus of triples in $\mathbb{P}^{2[3]}$ contained in $L_1$ and containing $P_1$. Let $U_\beta$ be the locus of triples contained in the union $L_1 \cup L_2$ containing $P_1$, meeting $L_1$ in at least length two, and incident to $L_2$. Let $U_\gamma$ be the locus of triples containing $P_2$, incident to each $L_i$ and meeting the union $L_1 \cup L_2$ in length two. Finally, let $U_\delta$ be the locus of triples containing $P_2$ and meeting $L_2$ in length two. Of course, $\alpha = [U_\alpha]$, $\beta = [U_\beta]$, $\gamma = [U_\gamma]$, and $\delta = [U_\delta]$ (see Section \ref{sec:msbasis}). Furthermore, let $C_1$ be the unique curve of degree $d$ contained in $P$ and containing $P_1$, and let $C_2$ be the unique curve of degree $d$ contained in $P$ and containing $P_2$. Note that the fixed curves $C_1$ and $C_2$ meet each of the fixed general lines $L_1$ and $L_2$ transversally. 

We first show that the intersections of $\Theta_{2,d}$ and the loci $U_\alpha$, $U_\beta$, $U_\gamma$, and $U_\delta$ occur at the correct number of points, each of which corresponds to a reduced subscheme of $\mathbb{P}^2$. Transversality of these intersections will be addressed below.

The intersection of $\Theta_{2,d}$ with $U_\alpha$ consists of only schemes contained in the curve $C_1$. 
The additional requirement that the schemes be contained in $L_1$ determines the intersection to be triples chosen from $C_d \cap L_1$. There are ${d-1 \choose 2}$ such schemes, due to the required inclusion of $P_1$ in each such scheme, all consisting of three distinct points.

The intersection of $\Theta_{2,d}$ with $U_\beta$ also consists of only schemes contained in the curve $C_1$ and containing $P_1$. $C_1$ meets each line $L_i$ in $d$ points, one such intersection being $P_1$ itself. Thus the intersection of $\Theta_{2,d}$ with $U_\beta$ consists of $(d-1) \cdot d$ schemes consisting of reduced subschemes of $\mathbb{P}^2$ consisting of two points on $L_1$, one of which is $P_1$, and a point on $L_2$. 

Having now sensed the theme, we calculate the intersections $\Theta_{2,d} \cap U_\gamma$ and $\Theta_{2,d} \cap U_\delta$ relying heavily on the requirement that each scheme in either intersection must be contained in the curve $C_2$. The intersection of $C_2$ with each $L_i$ is $d$ points each, and a choice of one from each such collection determines a reduced subscheme of $\mathbb{P}^2$ in the intersection $\Theta_{2,d} \cap U_\gamma$ for a total of $d^2$ such schemes. A choice of any two from $L_2$ determines a scheme in the intersection $\Theta_{2,d} \cap U_\delta$ for a total of ${d \choose 2}$ reduced subschemes of $\mathbb{P}^2$. 

It remains then to justify the transversality at each point of each intersection. First, let $Z$ be a general reduced subscheme of $\mathbb{P}^2$ of length three -- that is, three distinct points in $\mathbb{P}^2$. Around such a point the Hilbert scheme $\mathbb{P}^{2[3]}$ has charts isomorphic to $\mathbb{A}^2 \times \mathbb{A}^2 \times \mathbb{A}^2$ with coordinates we denote by $x_i,y_i$ for each factor $i = 1,2,3$. Since every point in the intersection of the locus $\Theta_{2,d}$ and any of the loci $U_\alpha, U_\beta, U_\gamma, U_\delta$ is a reduced scheme, we describe the loci explicitly in these charts and compute the transversality directly. We only work out the case of $U_\gamma \cap \Theta_{2,d}$ completely, as the others are analogous. 

Choose as a basis $\{C_1,C_2\}$ for the plane of polynomials $P$ defining the locus $\Theta_{2,d}$. A point $(x_i,y_i)$ is a member of $\Theta_{2,d}$ if there is some choice of $a,b$ such that 
$$aC_1(x_i,y_i) + b C_2(x_i,y_i) = 0$$
for each $i = 1,2,3$ which determines three equations defining $\Theta_{2,d}$:
$$f_{ij} := C_1(x_i,y_i)C_2(x_j,y_j) - C_1(x_j,y_j)C_2(x_i,y_i) = 0$$
for $1 \leq i < j \leq 3$. 

Recall that each point $Z \in U_\gamma \cap \Theta_{2,d}$ is contained in the curve $C_2$, that the curves $C_1$ and $C_2$ are not tangent to either $L_1$ or $L_2$, and that $C_1$ and $C_2$ are distinct independent curves, so that in particular, $C_1$ does not vanish at any point in $Z$. The tangent space at $Z$ is the kernel of the Jacobian
\[\scalemath{0.8}{\left[\begin{array}{cccccc} -\frac{\partial C_2}{\partial x_1} (z_1) \cdot C_1(z_2) & \frac{\partial C_2}{\partial y_1} (z_1) \cdot C_1(z_2) & \frac{\partial C_2}{\partial x_2} (z_2) \cdot C_1(z_1) & \frac{\partial C_2}{\partial y_2} (z_2) \cdot C_1(z_1) &0&0\\
                              -\frac{\partial C_2}{\partial x_1} (z_1) \cdot C_1(z_3) & -\frac{\partial C_2}{\partial y_1} (z_1) \cdot C_1(z_3) & 0 & 0 &\frac{\partial C_2}{\partial x_3} (z_3) \cdot C_1(z_1) & \frac{\partial C_2}{\partial y_3} (z_3) \cdot C_1(z_1) \\ 
                              0&0&-\frac{\partial C_2}{\partial x_2} (z_2) \cdot C_1(z_3) &-\frac{\partial C_2}{\partial y_2} (z_2) \cdot C_1(z_3) & -\frac{\partial C_2}{\partial x_3} (z_3) \cdot C_1(z_2)&\frac{\partial C_2}{\partial y_3} (z_3) \cdot C_1(z_2) \end{array}  \right]}\]
which gives equations
\[\scalemath{.9}{ -\left(\frac{\partial C_2}{\partial x_i} (z_i) \cdot C_1(z_j) \right) x_i - \left(\frac{\partial C_2}{\partial y_i} (z_i) \cdot C_1(z_j) \right) y_i +\left(\frac{\partial C_2}{\partial x_j} (z_j) \cdot C_1(z_i) \right) x_j +\left(\frac{\partial C_2}{\partial y_j} (z_j) \cdot C_1(z_i)\right) y_j = 0} \]
for the tangent space for each $i,j$ with $1 \leq i < j \leq 3$. 

Now every scheme in $U_\gamma$ contains $P_2$, and so we may assume without loss that $P_2 = z_1 \in Z$. Consequently, the equations defining the tangent space to $U_\gamma$ in the first factor of $\mathbb{A}^2$ are $x_1 = y_1 = 0$, so we intersect these with the equations above for $i = 1, j = 2,3$ and achieve
$$\left(\frac{\partial C_2}{\partial x_j} (z_j) \cdot C_1(z_1) \right) x_j +\left(\frac{\partial C_2}{\partial y_j} (z_j) \cdot C_1(z_1)\right) y_j = 0.$$
But since $C_1(z_1) \neq 0$, we see that the tangent space to $\Theta_{2,d}$ in the second and third factors of the chart is simply the tangent space to $C_2$ at $z_2$ and $z_3$, respectively. Since $L_1$ and $L_2$ intersect $C_2$ transversally, the intersection of $\Theta_{2,d}$ with $U_\gamma$ is also transverse. 

\subsubsection{The third Chern class}

Recall that $c_3(\mathcal{O}(d)^{[3]})$ has a geometric representative $\Theta_{3,d}$ described as the locus of triples contained in a curve $C_d$ of degree $d$ (see Section \ref{sec:geometricdescriptions}). 

The intersections of $\Theta_{3,d}$ with any loci representing the classes $U,V$, or $Z$ in the MS basis is zero, since the defining representative loci for each of these classes consist of all schemes containing a fixed general point which is not incident to the curve $C_d$. 

Let $L_1$, $L_2$, and $L_3$ be three general fixed lines in $\mathbb{P}^2$. Let $U_W$ be the locus of triples which are incident to each $L_i$, have length two contained in each pairwise union of lines, and are contained in the union of all three line. 
Let $U_X$ be the locus of triples meeting $L_1$ in length two and incident to $L_2$ and contained in the union $L_1 \cup L_2$, and let $U_Y$ be the locus of triples contained in just $L_1$ (so that $[U_W] =W, [U_X] = X,$ and $[U_Y]=Y$). In particular, each $L_i$ meets $C_d$ transversally in $d$ distinct points. 

The intersection of $\Theta_{3,d}$ with the loci $U_W$ consists of $d^3$ reduced subschemes given as the union of any intersection point of each the three lines $L_i$ with $C_d$. The intersection $\Theta_{3,d} \cap U_X$ consists of ${d \choose 2} \cdot d$ reduced subschemes corresponding to the union of any two points in $L_1 \cap C_d$  and any point of $L_2 \cap C_d$. The intersection $\Theta_{3,d} \cap U_Y$ consists of ${d \choose 3}$ reduced subschemes given as the choice of any three points in $L_1 \cap C_d$. 

Since any point $Z = \{p_1,p_2,p_3\}$ in the intersection of $\Theta_{3,d}$ and $U_W$, $U_X$, or $U_Y$, respectively, is necessarily reduced, there are charts for $\mathbb{P}^{2[3]}$ centered at $Z$ isomorphic to $(\mathbb{A}^2)^3$. Furthermore, $Z$ is necessarily disjoint from the singular locus of $C_d$, so the tangent space to $\Theta_{3,d}$ at $Z$ is the product of the lines tangent to $C_d$ at each of the $p_i$. On the other hand, the loci $U_W$, $U_X$, and $U_Y$ are locally products of lines $L_i \cap \mathbb{A}^2$, and accordingly, so are their tangent spaces. Since the each of the lines $L_i$ is transverse to $C_d$ in $\mathbb{P}^2$, it follows that the intersections $\Theta_{3,d} \cap U_W$, $\Theta_{3,d} \cap U_X$, and $\Theta_{3,d} \cap U_Y$ are transverse also. 

\subsection{Tautological bundles}

Given the Chern/Schur classes of tautological bundles coming from line bundles, we can compute the Chern/Schur classes of tautological bundles coming from vector bundles. This follows immediately from three facts. First, given a short exact sequence of vector bundles on $\mathbb{P}^2$, the same sequence where we replace each bundle with the tautological bundle coming from it is a short exact sequence on $\mathbb{P}^{2[n]}$. Second, $\{\mathcal{O}(d),\mathcal{O}(d+1),\mathcal{O}(d+2)\}$ generates the derived category of coherent sheaves on $\mathbb{P}^2$. Third, the ring structure of the Chow ring is $\mathbb{P}^{2[3]}$ is known.  From these three, the computation is a numerical calculation. As the formulas are rather messy, we defer their statement to the Appendix \ref{app a}.

\section{A criterion for nefness}
One immediate application of the computation of these tautological classes, beyond just being interesting in their own right, is to the computation of the ``positive'' cones of cycles of the Hilbert scheme.
In the next two sections, we compute the nef cones in codimension 2 and 3, and by duality, the effective cones in dimensions 2 and 3. Note, both the effective and the nef cones are previously known in (co)dimensions 1 and 5 \cite{LQZ,CHW} so this leaves only nef cone in codimension 4, and its dual the effective cone in codimension 2, to be computed.
In addition to the previous computations, the key idea is to use Kleiman's tranversality theorem on the orbits of the action of $\mathrm{PGL}(3)$ on $\mathbb{P}^{2[3]}$ to provide a criterion for nefness. This reduces the problem to that of understanding the cycle structure (Chow ring, nef cones, and effective cones) of the closure of those orbits. In this section, we introduce the lemmas giving that criterion for nefness.

Let $G$ be an infinite connected group acting on a smooth variety $X$ of dimension $n$ with finitely many orbits $\mathcal{O}_1,\cdots,\mathcal{O}_m$, which are varieties themselves. Let $Y$ be a codimensional $k$ subvariety of $X$ and $Z$ an irreducible $k$-dimensional subvariety of $X$. 
We first state two immediate applications of Kleiman's transversality theorem.

\begin{lem}
If $Y$ intersects an orbit $\mathcal{O}$ of dimension $d$ in the expected dimension, then $gY \cap gZ\vert_\mathcal{O}$ is empty or finitely many points for the general $g\in G$. 
\end{lem}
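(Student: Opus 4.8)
\emph{Proof strategy.} The plan is to deduce the lemma directly from Kleiman's transversality theorem, applied not to $X$ (on which $G$ is not transitive) but to the single homogeneous space $\mathcal{O}$. Since $\mathcal{O}$ is an orbit of the connected group $G$, it is an irreducible, smooth, locally closed, $G$-invariant subvariety of $X$ on which $G$ acts transitively; working over an algebraically closed field of characteristic zero, Kleiman's theorem then applies in the form: for locally closed subvarieties $V,W\subseteq\mathcal{O}$ there is a dense open $U\subseteq G$ with $gV\cap W$ either empty or of pure dimension $\dim V+\dim W-\dim\mathcal{O}$ for all $g\in U$. I would invoke this with $V=Y\cap\mathcal{O}$ and $W=Z\cap\mathcal{O}$.

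The substance of the argument is the dimension count. The hypothesis that the codimension-$k$ subvariety $Y$ meets $\mathcal{O}$ in the expected dimension says exactly that $\dim(Y\cap\mathcal{O})\le d-k$ (and if $k>d$ it forces $Y\cap\mathcal{O}=\emptyset$, so there is nothing to prove). On the other side, $Z\cap\mathcal{O}\subseteq Z$ gives $\dim(Z\cap\mathcal{O})\le\dim Z=k$ with no further input. Feeding these bounds into Kleiman's formula yields, for general $g\in G$,
\[
\dim\bigl(g(Y\cap\mathcal{O})\cap(Z\cap\mathcal{O})\bigr)\ \le\ (d-k)+k-d\ =\ 0 ,
\]
so the intersection is finite or empty. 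Since $\mathcal{O}$ is $G$-invariant, $g(Y\cap\mathcal{O})=gY\cap\mathcal{O}$, so this locus is exactly the portion of $gY\cap gZ$ lying in $\mathcal{O}$ (after translating by $g$), i.e.\ the set $gY\cap gZ\vert_{\mathcal{O}}$ of the statement; this finishes the proof.

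The only point that needs care, and the one I would spend time on, is the reducibility bookkeeping: $Y\cap\mathcal{O}$ and $Z\cap\mathcal{O}$ need not be irreducible, so one must either cite the form of Kleiman's theorem valid for arbitrary schemes or apply the irreducible version to each pair of components and intersect the finitely many resulting dense opens of $G$ — which is harmless because $G$ is irreducible, so a finite intersection of dense opens stays dense. I would also record explicitly that ``expected dimension'' is to be read as an upper bound $\le d-k$ on every component of $Y\cap\mathcal{O}$ (all the count uses), and note in passing that $\mathcal{O}$ being only locally closed and non-projective does no harm, since Kleiman's theorem concerns dimensions of fiber products of morphisms into a homogeneous space. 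With that understood the argument is immediate — there is no hard estimate to carry out.
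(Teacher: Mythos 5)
Your proposal is correct and follows essentially the same route as the paper: bound $\dim(Y\cap\mathcal{O})$ by $d-k$ using the expected-dimension hypothesis, bound $\dim(Z\cap\mathcal{O})$ by $k$ trivially, and conclude via Kleiman's transversality theorem applied to the homogeneous space $\mathcal{O}$ that the general translate meets in dimension at most $0$. The extra remarks on reducible intersections and $G$-invariance of $\mathcal{O}$ are fine but the paper treats these as implicit.
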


\begin{proof}
Since $Y$ intersects $\mathcal{O}$ in the expected dimension, we have \[\mathrm{codim}_{\mathcal{O}}(Y \cap \mathcal{O}) + \mathrm{codim}_{\mathcal{O}}(Z \cap \mathcal{O}) \geq  k +(d-k) \geq d.\]
The lemma is then immediate from Kleiman's transversality theorem.
\end{proof}

\begin{lem}
If $Y$ intersects an orbit $\mathcal{O}$ (of dimension $d$) one dimension higher than expected and $\dim(\mathcal{O}\cap Z) < \dim(Z)$, then $gY \cap gZ \vert_\mathcal{O}$ is empty or finitely many points for the general $g\in G$. 
\end{lem}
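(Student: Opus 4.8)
The plan is to imitate the proof of the previous lemma, again reducing everything to Kleiman's transversality theorem, but now being careful about the dimension bookkeeping when $Y$ meets $\mathcal{O}$ in one higher than the expected dimension. First I would set up notation: write $d = \dim \mathcal{O}$, recall $\operatorname{codim}_X Y = k$ so the expected dimension of $Y \cap \mathcal{O}$ is $d-k$, and by hypothesis the actual dimension is $d-k+1$, i.e. $\operatorname{codim}_{\mathcal{O}}(Y \cap \mathcal{O}) = k-1$. Similarly $\dim Z = k$, so the expected dimension of $Z \cap \mathcal{O}$ is $d-k$; the hypothesis $\dim(\mathcal{O} \cap Z) < \dim Z = k$ is not directly a statement about $Z \cap \mathcal{O}$ inside $\mathcal{O}$, so I need to be slightly careful — what matters is $\operatorname{codim}_{\mathcal{O}}(Z \cap \mathcal{O})$, and since $Z \cap \mathcal{O} \subseteq \mathcal{O}$ has dimension at most $k-1$ (as a locally closed subset of $Z \cap \mathcal{O}$... actually $Z\cap\mathcal O$ could a priori be larger, but it sits inside $Z$ so $\dim(Z\cap\mathcal O)\le k-1$), we get $\operatorname{codim}_{\mathcal{O}}(Z \cap \mathcal{O}) \geq d - (k-1) = d-k+1$.

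Next I would add the two codimension estimates inside the homogeneous space $\mathcal{O}$. Since $G$ acts transitively on $\mathcal{O}$, Kleiman's theorem applies to the subvarieties $Y \cap \mathcal{O}$ and $Z \cap \mathcal{O}$ of $\mathcal{O}$: for general $g$, $\dim\big(g(Y\cap\mathcal{O}) \cap (Z\cap\mathcal{O})\big) = \dim(Y\cap\mathcal{O}) + \dim(Z\cap\mathcal{O}) - d$, or the intersection is empty if this number is negative. Here I should note $g(Y\cap\mathcal{O}) \cap (Z\cap\mathcal{O}) = (gY \cap gZ)|_{\mathcal{O}}$ since $g\mathcal O=\mathcal O$, which is exactly the set in the statement (after replacing the pair $(g, \mathrm{id})$ by $(g,g)$, or noting $G$-invariance lets us translate both). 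Plugging in the estimates: $\dim(Y\cap\mathcal{O}) + \dim(Z\cap\mathcal{O}) - d \leq (d-k+1) + (k-1) - d = 0$. Hence the general translate of $(gY \cap gZ)|_{\mathcal{O}}$ is empty or zero-dimensional, and zero-dimensional plus closed-in-$\mathcal{O}$... I should also invoke that $gY \cap gZ$ is closed in $X$ so its restriction to $\mathcal{O}$ is at least locally closed, hence a finite set of points when zero-dimensional. This is the whole argument.

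The one genuine subtlety — and the step I expect to be the main obstacle to state cleanly rather than to prove — is making precise the relationship between the hypothesis ``$\dim(\mathcal{O} \cap Z) < \dim Z$'' and the codimension bound for $Z \cap \mathcal{O}$ as a subvariety of the smooth variety $\mathcal{O}$ that Kleiman's theorem needs. One must check that $Z \cap \mathcal{O}$, being a subset of $Z$, indeed has dimension at most $k-1$, which gives $\operatorname{codim}_{\mathcal{O}}(Z\cap\mathcal{O}) \geq d-k+1$; this is immediate since dimension does not increase under intersection. A second point worth a sentence: Kleiman's transversality is usually stated for $G$ acting transitively on the ambient variety, so I apply it with ambient variety $\mathcal{O}$ (smooth, since the orbits are assumed to be varieties and orbits of a group action on a smooth variety are smooth) and with the two subvarieties $\overline{Y \cap \mathcal{O}}$ and $\overline{Z \cap \mathcal{O}}$ — or rather with each irreducible component — so the conclusion about dimension of the general intersection transfers verbatim. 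No new ideas beyond Lemma 1 are needed; it is purely the arithmetic $d-k+1$ replacing $d-k$ and absorbing the extra $+1$ against the codimension-$(k-1)$ bound coming from $\dim(Z\cap\mathcal{O}) \le k-1$.
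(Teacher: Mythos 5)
Your argument is exactly the paper's proof, which simply records the codimension estimate $\mathrm{codim}_{\mathcal{O}}(Y\cap\mathcal{O})+\mathrm{codim}_{\mathcal{O}}(Z\cap\mathcal{O})\geq (k-1)+(d-k+1)\geq d$ and then invokes Kleiman's transversality theorem; your extra bookkeeping (working with closures of components inside the homogeneous space $\mathcal{O}$, and absorbing the $+1$ against $\dim(Z\cap\mathcal{O})\leq k-1$) just makes explicit what the paper leaves implicit. The only wrinkle, the identification of $g(Y\cap\mathcal{O})\cap(Z\cap\mathcal{O})$ with the set $gY\cap gZ\vert_{\mathcal{O}}$ in the statement, reflects the paper's own loose notation rather than a gap in your argument.
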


\begin{proof}
Since $Y$ intersects $\mathcal{O}$ in the expected dimension plus one and  $\dim(\mathcal{O}\cap Z) < \dim(Z)$, we have \[\mathrm{codim}_{\mathcal{O}}(Y \cap \mathcal{O}) + \mathrm{codim}_{\mathcal{O}}(Z \cap \mathcal{O}) \geq  k-1 +(d-k+1) \geq d.\]
The lemma is then immediate from Kleiman's transversality theorem.
\end{proof}

We can piece these lemmas together to give the criterion we wanted for a class to be nef.

\begin{lem}
\label{lem group}
Let $G$ be an infinite group acting on a smooth variety $X$ of dimension $n$ with finitely many orbits $\mathcal{O}_1,\cdots,\mathcal{O}_m$. Let $Y$ be a subvariety on $X$ of codim $k$ which intersects $\mathcal{O}_1, \cdots, \mathcal{O}_l$ in the expected dimension and $\mathcal{O}_{l+1}, \cdots, \mathcal{O}_m$ in at most one dimension higher than expected.  
If $Y$ intersects every $k$-fold contained in the closures of $\mathcal{O}_{l+1}, \cdots, \mathcal{O}_m$ nonnegatively, then $Y$ is nef.
\end{lem}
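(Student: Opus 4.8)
The plan is to express the intersection of a $k$-cycle $Z$ with the translate $gY$ as a sum of local contributions, one for each orbit, and then show each contribution is nonnegative. Concretely, let $Z$ be an effective $k$-dimensional cycle on $X$; we want $Y \cdot Z \geq 0$. Since numerical and rational equivalence agree here, it is enough to treat an irreducible $k$-fold $Z$. Replace $Y$ by a general translate $gY$ (with $g \in G$ general): since $G$ acts with the fixed orbit stratification, $gY$ and $Y$ are rationally equivalent, so $gY \cdot Z = Y \cdot Z$, and I get to use Kleiman transversality for $gY$ against any fixed subvariety. First I would stratify: $X = \bigsqcup_i \mathcal{O}_i$, and correspondingly $Z = \bigsqcup_i (Z \cap \mathcal{O}_i)$. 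For the general $g$, the set-theoretic intersection $gY \cap Z$ is contained in the union of the finite point-sets $gY \cap Z|_{\mathcal{O}_i}$, which I analyze orbit by orbit.

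The key case division follows the two preliminary lemmas. For the orbits $\mathcal{O}_1,\dots,\mathcal{O}_l$ on which $Y$ meets $\mathcal{O}_i$ in the expected dimension, Lemma 1 (applied with the orbit $\mathcal{O}_i$, the cycle $Y$, and $Z$) says $gY \cap gZ|_{\mathcal{O}_i}$ is empty or finite for general $g$; replacing $Z$ by $g^{-1}Z$ — or equivalently noting we may translate $Z$ as well since $Y\cdot Z = gY\cdot gZ$ — I conclude $gY \cap Z|_{\mathcal{O}_i}$ is finite, and more importantly, by Kleiman each such point is a transverse intersection point inside the smooth locus, hence contributes $+1$. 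For the orbits $\mathcal{O}_{l+1},\dots,\mathcal{O}_m$ where $Y$ meets $\mathcal{O}_i$ one dimension too high, I split further according to whether $\dim(Z \cap \mathcal{O}_i) < \dim Z$ or $\dim(Z \cap \mathcal{O}_i) = \dim Z$. In the first subcase, Lemma 2 gives finitely many transverse points again, each contributing $+1$. In the second subcase, $Z$ is (generically) contained in $\overline{\mathcal{O}_i}$, i.e. $Z$ is a $k$-fold inside the closure of one of the bad orbits; these are exactly the $k$-folds covered by the hypothesis "$Y$ intersects every $k$-fold contained in the closures of $\mathcal{O}_{l+1},\dots,\mathcal{O}_m$ nonnegatively," so $Y \cdot Z \geq 0$ directly and we are done for that $Z$.

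Summing up: after translating $Y$ generically, $Y \cdot Z = \sum_{i} (\text{local contribution along } \mathcal{O}_i)$. For $i \le l$ and for $i > l$ with $Z$ not contained in $\overline{\mathcal{O}_i}$, each local term is a finite sum of transverse (hence positive) intersection multiplicities, so it is $\ge 0$; and if $Z$ is contained in some $\overline{\mathcal{O}_i}$ with $i > l$, the whole intersection number is $\ge 0$ by hypothesis. Either way $Y \cdot Z \ge 0$, and since $Z$ was an arbitrary effective $k$-cycle, $Y$ is nef. The one technical point I would be careful about — and which I expect to be the main obstacle to write cleanly — is the bookkeeping of translating $Y$ versus translating $Z$: Kleiman's theorem is stated for a general translate of one subvariety meeting a fixed one, so I must consistently phrase everything as "$gY$ meets the fixed $Z$" (or fix $Y$ and move $Z$), and verify that the finitely-many-points conclusions of Lemmas 1 and 2, which are stated as $gY \cap gZ|_{\mathcal{O}}$, transfer to $gY \cap Z|_{\mathcal{O}}$ after relabelling. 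A secondary subtlety is confirming that "expected dimension or one higher, globally" really does force, on each orbit, precisely the numerical hypothesis of Lemma 1 or Lemma 2 — i.e. that $Y$ meeting $\mathcal{O}_i$ in dimension $\dim\mathcal{O}_i - k$ or $\dim\mathcal{O}_i - k + 1$ is what those lemmas consume — but this is immediate from how "expected dimension" was defined there.
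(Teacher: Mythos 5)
Your proposal is correct and follows essentially the same route as the paper: reduce to an irreducible $k$-fold $Z$, dispose of the case $Z \subset \overline{\mathcal{O}_i}$ for a ``bad'' orbit by the hypothesis, and otherwise apply the two Kleiman lemmas orbit by orbit to conclude that a general translate of $Y$ meets $Z$ in finitely many points, so $Y \cdot Z \geq 0$. The only cosmetic difference is that the paper makes the containment case split once at the outset rather than orbit by orbit, which amounts to the same dichotomy.
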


\begin{proof}
Let $Z$ be an irreducible $k$-dimensional subvariety of $X$. 
It suffices to show that $Y \cdot Z \geq 0$

If $Z \subset \overline{\mathcal{O}_k}$ for some $k$ such that $l+1 \leq k \leq m$, then we have assumed that $Y \cdot Z \geq 0$ as desired.  

If not, then every orbit $\mathcal{O}$ either has that $Y$ intersects it in the expected dimension or that both $Y$ intersects it in the expected dimension plus one and it does not contain an open subset of $Z$.
By the respective lemmas in each case, we have that $gY \cap gZ \vert_\mathcal{O}$ is empty or consists of finitely many points for the general $g \in G$ for all orbits $\mathcal{O}$.
Since $G$ is infinite and there are finitely many orbits, we have that $gY \cap gZ$ is empty or consists of finitely many points for general $g \in G$.
\end{proof}

Note, the lemma is easily seen to apply to classes which have at least one representative for each other interesting that orbit in the stated dimension.

\section{The orbits and their cycle structures}
In this section, we work out the cycle structures of the orbits of the $\text{PGL}(3)$ action on $\mathbb{P}^{2[3]}$ including their nef and effective cones. In the next section, we apply this structure and the lemmas of the previous section to compute the nef and effective cones on the Hilbert scheme. Note that $\text{PGL}(3)$ and its action on $\mathbb{P}^{2[3]}$ satisfy the hypotheses of the criterion from the previous section.

The action has seven orbits. We denote them by $\mathcal{O}$ with a subscript indicating the codimension of the orbit. When there is not a unique orbit in that codimension, we add an additional distinguishing subscript. There is a full-dimensional orbit which consists of collections of three distinct, non-collinear points, denoted by $\mathcal{O}_0$. There are two divisorial orbits. One is the locus of three distinct, collinear points, denoted by $\mathcal{O}_{1,col}$. The other is the locus of non-collinear schemes supported on two distinct points, denoted by $\mathcal{O}_{1,nonred}$. Note, in most literature, the closure of this locus is denoted by $B$ or $E$, designations we avoid for clarity. There are two orbits of codimension two as well. The first is the locus of collinear schemes supported on two distinct points, denoted $\mathcal{O}_{2,col}$. The second is the locus of non-collinear curvilinear schemes supported at a single point, denoted by $\mathcal{O}_{2,nonred}$. Finally, there are one orbit each in codimensions three and four. The codimension three orbit is the locus of collinear schemes supported at a single point, denoted $\mathcal{O}_{3}$. The codimension four orbit is the locus of schemes which are the maximal ideal squared at a point, denoted $\mathcal{O}_{4}$. We denote schemes supported at a single point as totally non-reduced.

In each case, we omit cycle structure in dimensions 0 and the dimension of the locus as they are all trivial. In addition, we compute the classes of the extremal rays of the cones, and of the closures of the orbits, in $A\left( \mathbb{P}^{2[3]}\right)$. In particular, we fix the notation we use to refer to these classes for the rest of the paper.

Also, we say ``the class of the locus where....'', but we do not define the scheme structure on it. 
That is given through the intersection products giving the classes except for the cases of $\mathcal{O}_{2,nonred}$ and $\mathcal{O}_4$ where we compute the class of the reduced structure.

\subsection{The plane of maximal ideals squared}
\label{subsec:M}
First, we have that $\mathcal{O}_{4} = \overline{\mathcal{O}_{4}} \simeq \mathbb{P}^2$. 

In order to compute its class, we first observe that $\overline{\mathcal{O}_{2,nonred}}\cdot F^2$ is supported on two irreducible components, $\mathcal{O}_4$ and $\overline{\mathcal{O}_{2,nonred}}\cdot D$ so $l\mathcal{O}_4 = (F^2-kD)\cdot \overline{\mathcal{O}_{2,nonred}}$.
We first solve for $k$ then for $l$.
Later in this section, we compute that $\overline{\mathcal{O}_{2,nonred}} = 3(A-B+C-D)$.
Since $\overline{\mathcal{O}_{1,col}}$ and $\mathcal{O}_{4}$ are disjoint, $ \mathcal{O}_{4}\cdot \overline{\mathcal{O}_{1,col}}\cdot H =0$.
It is well known that $\overline{\mathcal{O}_{1,col}} = F-H$, and so we have that $\overline{\mathcal{O}_{2,nonred}}\cdot \overline{\mathcal{O}_{1,col}}\cdot H\cdot F^2 = 27$ and $\overline{\mathcal{O}_{2,nonred}}\cdot \overline{\mathcal{O}_{1,col}}\cdot H\cdot D = 9$.
Thus, we have that
\[l \mathcal{O}_{4} = \overline{\mathcal{O}_{2,nonred}}\cdot (F^2-3D) = 9 (\alpha + \delta + \epsilon).\]
In \cite{EL3}, it is computed that $H^2 \cdot \mathcal{O}_4 = 9$. Since $H^2\cdot 9(\alpha+\delta+\epsilon) =27$, $l = 3$.
Thus, \[\mathcal{O}_{4} = \frac{1}{3}\overline{\mathcal{O}_{2,nonred}}\cdot (F^2-3D) = 3 (\alpha + \delta + \epsilon).\]

Since $\mathcal{O}_{4}$ is isomorphic to a (nonreduced) projective plane, its Neron-Severi space is dimension 1. Let $C_{1 }$ be the class of the locus of maximal ideals squared supported on a fixed general line,
\begin{align*}
C_{1 } &= \overline{\mathcal{O}_{4}} \cdot H = 9\psi.\\
\end{align*}
It is immediate that $C_{1 }$ spans the effective and nef cones. 

\subsection{Totally non-reduced and collinear schemes}
\label{subsec:TNR Col}
Second, we have $\mathcal{O}_{3} = \overline{\mathcal{O}_{3}}$. It's class is \[\overline{\mathcal{O}_{3}} = \overline{\mathcal{O}_{2,nonred}}\cdot \overline{\mathcal{O}_{1,col}} =3(3U - 2V - W + 4X - 6Y - Z).\] Note, those two orbit classes will be calculated in their respective subsections. 
This has the structure of a $\mathbb{P}^1$-bundle over $\mathbb{P}^2$. In particular, it is the flag variety of affine lines in affine planes in affine three space, $F(1,2;3)$. That structure, tells us that the Chow ring is generated by two elements in each of dimension one and dimension two.  Let's introduce all the relevant classes. 

For surfaces, let $S_{1}$ be the class of the locus of totally nonreduced schemes collinear with a fixed point.  Similar, let $S_{2}$ be the class of locus of totally nonreduced collinear schemes supported on a fixed line.
This gives 
\begin{align*}
S_{1} &= \overline{\mathcal{O}_{2,nonred}}\cdot A = \frac{1}{3} \overline{\mathcal{O}_3} \cdot F =  3(\beta - \gamma + 2\delta + \epsilon) \text{ and }\\
S_{2} &= H \cdot \overline{\mathcal{O}_{3}} = \overline{\mathcal{O}_{2,nonred}} \cdot \overline{\mathcal{O}_{1,col}}\cdot H= 9(- 2\alpha + \beta).
\end{align*}

For curves, let $C_{2 }$ be the class of the locus of totally nonreduced collinear schemes supported on a fixed general point. 
Similarly, let $C_{3}$ be the class of the locus of totally nonreduced schemes which are contained in a fixed line. 
Then we have that
\begin{align*}
C_{2 } &= \overline{\mathcal{O}_{3}} \cdot E  = 9(\phi-\psi)\text{  and  }\\
C_{3} &= \frac{1}{3} \overline{\mathcal{O}_{3}} \cdot D = \overline{\mathcal{O}_{2,nonred}} \cdot Y= 3(- \phi + 2\psi).
\end{align*}

In both dimensions, the nef and effective cones coincide. 
\begin{prop}
$\text{Nef}^1(\mathcal{O}_3) = \text{Eff}_2(\mathcal{O}_3)$ is spanned by $S_{1}$ and $S_{2}$. Equivalently,  $\text{Eff}_1(\mathcal{O}_3) = \text{Nef}^2(\mathcal{O}_3) $ is spanned by $C_2$ and $C_3$.
\end{prop}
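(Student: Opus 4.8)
The plan is to exploit the explicit description of $\mathcal{O}_3$ as the flag variety $F(1,2;3)$, which is a $\mathbb{P}^1$-bundle over $\mathbb{P}^2$. For such a variety the Chow ring is well understood: $A^1$ and $A^2$ are each two-dimensional, and the effective/nef cones of divisors (curves) are finite polyhedral cones generated by the obvious geometric loci. So the real content is to identify our four distinguished classes $S_1, S_2$ (surfaces, i.e.\ divisors on the threefold $\mathcal{O}_3$) and $C_2, C_3$ (curves) with the natural generators on $F(1,2;3)$, and then read off the cones. Concretely, $F(1,2;3)$ maps to $\mathbb{P}^2$ in two ways (remembering the line, or remembering the plane, i.e.\ its dual point), and the pullbacks of $\mathcal{O}_{\mathbb{P}^2}(1)$ under these two maps give a basis of $A^1$; dually, a fiber of each projection gives a basis of $A^1 = A_1$ of curves.

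First I would set up the intersection pairing between $\{S_1,S_2\}$ and $\{C_2,C_3\}$ using the ambient intersection numbers already recorded in the tables of Section~\ref{sec:msbasis} together with the class computations $S_1 = 3(\beta-\gamma+2\delta+\epsilon)$, $S_2 = 9(-2\alpha+\beta)$, $C_2 = 9(\phi-\psi)$, $C_3 = 3(-\phi+2\psi)$. Computing the four pairings $S_i \cdot C_j$ (as numbers, i.e.\ degrees on $\mathbb{P}^{2[3]}$ after completing to complementary dimension, or equivalently intrinsically on $\mathcal{O}_3$) gives a $2\times 2$ matrix; I expect it to be nondegenerate, confirming that $\{S_1,S_2\}$ and $\{C_2,C_3\}$ are dual-ish bases and letting me express everything in the standard $F(1,2;3)$ generators. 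Geometrically the identifications should be transparent: $S_2$ (totally nonreduced collinear schemes on a fixed line) is the pullback of a line under "remember the plane/dual point," while $S_1$ (collinear with a fixed point) is a combination tied to the incidence; similarly $C_3$ (contained in a fixed line) is a fiber of one projection and $C_2$ (supported at a fixed point) is a fiber of the other.

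Next, knowing that on a $\mathbb{P}^1$-bundle over $\mathbb{P}^2$ every effective divisor is a nonnegative combination of the two boundary-type divisors coming from the two $\mathbb{P}^2$-structures (here $S_1$ and $S_2$), and that these same two divisors are nef (each being a pullback of an ample class, or a pullback twisted appropriately in a way that stays nef on the bundle), one concludes $\mathrm{Eff}_2(\mathcal{O}_3) = \mathrm{Nef}^1(\mathcal{O}_3) = \langle S_1, S_2\rangle$. The dual statement for curves follows formally: $\mathrm{Eff}_1 = \mathrm{Nef}^2$ is the dual cone, spanned by $C_2$ and $C_3$, which are exactly the two fiber classes (the Mori cone of a $\mathbb{P}^1$-bundle over $\mathbb{P}^2$ being generated by a fiber of the bundle map and a line in a section $\cong \mathbb{P}^2$). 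The equality of nef and effective in both degrees reflects that $F(1,2;3)$ is a toric-like homogeneous space whose two relevant cones happen to coincide.

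The main obstacle is verifying that $S_1$ and $S_2$ are simultaneously on the boundary of \emph{both} the effective and the nef cone — i.e.\ that neither is a strictly interior nef class and neither effective class sticks out beyond the cone they span. For $S_2$ this is clear since it is the pullback of $\mathcal{O}_{\mathbb{P}^2}(1)$ under a fibration (hence nef but not ample, so extremal in Nef, and its pullback status plus rigidity of fibers makes it extremal in Eff). For $S_1$ I would argue that it is the class of an irreducible divisor that is rigid — contracted by the other projection or by the bundle structure — so it spans an extremal ray of $\mathrm{Eff}_2$, and then check directly via the pairing matrix that $S_1 \cdot C_j \geq 0$ with equality for exactly one $j$, placing $S_1$ on the boundary of $\mathrm{Nef}^1$ as well. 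Once both generators are pinned to both cones' boundaries and the pairing matrix confirms the cones are two-dimensional and mutually dual, the proposition follows. I would phrase the write-up as: recall the $F(1,2;3)$ structure, cite the standard cone computation for $\mathbb{P}^r$-bundles over $\mathbb{P}^s$, identify the four classes, and check nonnegativity/extremality via the pairing.
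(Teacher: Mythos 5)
Your route is correct in substance but differs from the paper's, which is a two-line argument: since $\mathcal{O}_3$ is homogeneous and we are in the (co)dimension-one case, $S_1,S_2,C_2,C_3$ are all moving/base-point-free classes, hence nef, and the duality of the two-dimensional cones follows from the emptiness of the two intersections $S_1\cdot C_3$ and $S_2\cdot C_2$. You instead identify $\mathcal{O}_3\cong F(1,2;3)$ with its two projections to $\mathbb{P}^2$ and $\mathbb{P}^{2*}$, match $S_1,S_2$ with the pullbacks of hyperplane classes and $C_2,C_3$ with the fiber classes, and quote the standard cone structure of the flag variety; that buys a cleaner conceptual picture and makes the final cone statement a citation, at the cost of more bookkeeping. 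Two cautions if you write this up. First, your intermediate claim that on \emph{any} $\mathbb{P}^1$-bundle over $\mathbb{P}^2$ every effective divisor is a nonnegative combination of the two ``boundary-type'' divisors is false in general (a bundle with a rigid negative section has $\mathrm{Eff}\supsetneq\mathrm{Nef}$); what saves you is exactly the homogeneity of $F(1,2;3)$, which you invoke only at the end --- make that the load-bearing step, as the paper does. Second, the pairing matrix cannot be read off from the ambient classes $S_i\in A_2(\mathbb{P}^{2[3]})$ and $C_j\in A_1(\mathbb{P}^{2[3]})$ alone (they are not of complementary dimension in the Hilbert scheme); you should use the paper's presentations $S_1=\frac{1}{3}F\cdot\overline{\mathcal{O}_3}$ and $S_2=H\cdot\overline{\mathcal{O}_3}$, so that $S_i\cdot_{\mathcal{O}_3}C_j$ becomes an ambient divisor--curve pairing, giving $S_1\cdot C_3=S_2\cdot C_2=0$ and $S_1\cdot C_2=S_2\cdot C_3=3$, which is exactly the nondegenerate matrix with the zero pattern your argument needs.
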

As we are in the (co)dimension 1 case, all of these classes are nef as they are basepoint free/moving, and the duality of these classes follows from the emptyness of the intersections $S_1 \cdot C_3$ and $S_2 \cdot C_2$.

\subsection{Totally non-reduced schemes}
\label{subsec:TNR}
Third, we have $\overline{\mathcal{O}_{2,nonred}}$. To compute its class, we first observe that it intersects $\beta$, $\gamma$, $\delta$, and $\epsilon$ in 0. Thus, $\overline{\mathcal{O}_{2,nonred}} = k(A-B+C-D)$. We now compute the value $k$ in local coordinates.
If $\alpha$ is the locus of schemes containing the origin and contained in the line $\{y=0\}$, then the unique scheme in the intersection is given by the ideal $(x^3,y)$. There are charts, centered at this ideal, given by the coordinates $(a,b,c,d,e,f)$ and the ideals
\[(x^3 + ax^2 + bx + c, y + dx^2 + ex + f).\]
In this chart, $\alpha$ is defined by the equations $c = d = e = f = 0$, with $c=f=0$ coming from all the schemes in the locus containing the origin and $d = e = f = 0$ coming from all schemes being contained in the line $y = 0$. The totally nonreduced locus is the set of ideals with generator of the form $(x-A)^3$ for any $A \in \mathbb{C}$, and is therefore given by the equations
\[3b = a^2 \text{ and } 27c= a^3.\]
The ring of the intersection is therefore
\[\frac{\mathbb{C}[a,b,c,d,e,f]}{(3b - a^2, 27c - a^3, c,d,e,f)} \equiv \frac{\mathbb{C}[a]}{(a^3)}\]
which has basis given by $1,a,a^2$, and is therefore of length three. This gives the class
\[\overline{\mathcal{O}_{2,nonred}} = 3(A-B+C-D).\]

Next, $\overline{\mathcal{O}_{2,nonred}}$ has the structure of a $\mathbb{P}^2$ bundle over the punctual Hilbert scheme of three points, $\mathbb{A}_0^{2[3]}$. 
As $\mathbb{A}_0^{2[3]}$ is isomorphic to the quadric cone, we know it's Chow ring is dimension three. 
Then we know that the Chow ring of $\overline{\mathcal{O}_{2,nonred}}$ has dimension nine. 
Furthermore, we know that there are two dimensions of curve and threefold and three dimensions of surface.  

For threefolds, let $T_{1}$ be the class of the locus of totally nonreduced schemes supported on a fixed line. Similarly, let $T_{2}$ be the class of the locus of totally nonreduced collinear schemes. Then we have that
\begin{align*}
T_{1} &= \overline{\mathcal{O}_{2,nonred}}\cdot H = 3(W - 3X + 3Y)\text{ and }\\
T_{2} &=   \overline{\mathcal{O}_{2,nonred}}\cdot \overline{\mathcal{O}_{1,col}}= \overline{\mathcal{O}_{3}} = 3(3U - 2V - W + 4X - 6Y - Z).
\end{align*}

For surfaces, let $S_{3}$ be the class of the locus of totally nonreduced schemes supported at a single fixed point. Similarly, let $S_{4}$ be the locus of schemes which have length two supported on a fixed line and are supported at a single (moving) point. 
Then we have that
\begin{align*}
S_{3} &= \overline{\mathcal{O}_{2,nonred}}\cdot E = 3(\alpha - \beta + \gamma - \delta)\text{ and }\\
S_{4} &= \overline{\mathcal{O}_{2,nonred}}\cdot D = 3(-2\alpha+\beta).\\
\end{align*}

A multiple ($\frac{2}{3}$) of already mentioned curve $C_{1 }$ is also represented by the locus of totally nonreduced schemes contained in a general smooth conic.
We can see this locus is a multiple of $C_{1 }$ as they are both dual to the divisor of collinear schemes both in the orbit and in the Hilbert scheme.
Similarly, $C_{2 }$ is also represented by the locus of schemes supported at a point with a length 2 subscheme collinear with a general fixed point.
We can see this locus must be at least a multiple of $C_{2 }$ as they both intersect $H$ and its restriction to the orbit in zero.
Together this gives that
\begin{align*}
\tfrac{2}{3}C_{1 } &= \overline{\mathcal{O}_{2,nonred}}\cdot c_3\left(\mathcal{O}(2)^{[3]}\right) = 6 \psi \text{ and}\\
C_{2 } &= \overline{\mathcal{O}_{2,nonred}}\cdot E \cdot F = 9(\phi-\psi).
\end{align*}

Recall that the the class $\overline{\mathcal{O}_{4}}$ was computed in Subsection \ref{subsec:M}.

\begin{prop}
$\text{Nef}^1(\overline{\mathcal{O}_{2,nonred}})$ is spanned by $T_{1}$ and $T_{1}+T_{2}$, $\text{Nef}^2(\overline{\mathcal{O}_{2,nonred}})$ is spanned by $S_{3}$,  $S_{4}$, and $\mathcal{O}_{4}$, and
$\text{Nef}^3(\overline{\mathcal{O}_{2,nonred}})$ is spanned by $C_{1 }$ and $C_{2 }$.
Conversely, $\text{Eff}_1(\overline{\mathcal{O}_{2,nonred}})$ is spanned by $C_{2 }$ and $C_{3}$, $\text{Eff}_2(\overline{\mathcal{O}_{2,nonred}})$ is spanned by $S_{1}$,  $S_{3}$, and $S_{4}$, and
$\text{Eff}_3(\overline{\mathcal{O}_{2,nonred}})$ is spanned by $T_{1}$ and $T_{2}$. 
\end{prop}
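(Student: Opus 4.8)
\emph{Proof strategy.} The plan is to use the $\mathbb{P}^2$-bundle structure $\pi\colon\overline{\mathcal{O}_{2,nonred}}\to\mathbb{A}_0^{2[3]}$ recorded above in Subsection~\ref{subsec:TNR}, together with the fact that the punctual Hilbert scheme $\mathbb{A}_0^{2[3]}$ is the quadric cone: its cycle groups $N_0,N_1,N_2$ are one-dimensional and in each degree $\mathrm{Eff}=\mathrm{Nef}$, generated respectively by a point, a ruling, and the fundamental class. By the projective bundle formula $N^\ast(\overline{\mathcal{O}_{2,nonred}})$ is a free module over $N^\ast(\mathbb{A}_0^{2[3]})$ on $1,\xi,\xi^2$ with $\xi$ the relative hyperplane class, which reproduces the graded dimensions $(1,2,3,2,1)$ and shows that $T_1,T_2$; $S_1,S_3,S_4$; and $C_1,C_2,C_3$ form bases of $N^1$, $N^2$, $N^3$ respectively. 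Every class appearing below is the restriction to $\overline{\mathcal{O}_{2,nonred}}$ of an MS class, so by the projection formula each intersection number we need is a triple product $\overline{\mathcal{O}_{2,nonred}}\cdot(\text{MS class})\cdot(\text{MS class})$ in the known Chow ring of $\mathbb{P}^{2[3]}$, computed from the MS-basis expressions of Subsections~\ref{subsec:M}--\ref{subsec:TNR}.

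The geometric heart of the argument is the determination of the pseudoeffective cones. Each listed generator is the class of an explicit subvariety produced above, so $\mathrm{Eff}_1\supseteq\langle C_2,C_3\rangle$, $\mathrm{Eff}_2\supseteq\langle S_1,S_3,S_4\rangle$, and $\mathrm{Eff}_3\supseteq\langle T_1,T_2\rangle$. For the reverse inclusions I would stratify $\overline{\mathcal{O}_{2,nonred}}\supset\overline{\mathcal{O}_3}\supset\overline{\mathcal{O}_4}$ and take an irreducible subvariety $V$ of the relevant dimension. If $V$ lies in $\overline{\mathcal{O}_3}$ (or in $\overline{\mathcal{O}_4}$) then $[V]$ lies in the image of the effective cone of $\overline{\mathcal{O}_3}$ (resp.\ $\overline{\mathcal{O}_4}$) computed in Subsections~\ref{subsec:TNR Col} and~\ref{subsec:M}, and one checks directly that the generators of those cones are nonnegative combinations of the claimed generators. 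Otherwise $V$ meets the open orbit densely, and according to whether $\dim\pi(V)$ is $0$, $1$, or $2$ the class $[V]$ decomposes as a nonnegative combination of the classes $\xi^{\,\dim V-\dim\pi(V)}\cdot\pi^\ast(\text{effective on }\mathbb{A}_0^{2[3]})$; here the fiber-direction coefficient is the degree of $V$ on a general fiber, while nonnegativity of the base-direction coefficient uses that $T_1=\overline{\mathcal{O}_{2,nonred}}\cdot H$ is the minimal effective twist of $\xi$ and that the boundary $T_2=\overline{\mathcal{O}_3}$ is pulled back from $\mathbb{A}_0^{2[3]}$. Since the effective cones of $\mathbb{A}_0^{2[3]}$ are the elementary ones above, $[V]$ lands in the claimed cone in every case, giving $\overline{\mathrm{Eff}}_1=\langle C_2,C_3\rangle$, $\overline{\mathrm{Eff}}_2=\langle S_1,S_3,S_4\rangle$, $\overline{\mathrm{Eff}}_3=\langle T_1,T_2\rangle$; in particular each is closed polyhedral, so $\mathrm{Eff}_k=\overline{\mathrm{Eff}}_k$.

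With the pseudoeffective cones in hand, the nef cones are obtained by dualizing, $\mathrm{Nef}^k=(\overline{\mathrm{Eff}}_k)^\vee$. Since $\overline{\mathrm{Eff}}_1$ and $\overline{\mathrm{Eff}}_3$ are two-dimensional and $\overline{\mathrm{Eff}}_2$ is simplicial of dimension three, each dual is finite polyhedral with the same number of rays, and it remains only to identify those rays with the listed nef generators. This is a finite check with the pairing matrices: in the orderings $(T_1,\,T_1+T_2)$ against $(C_2,\,C_3)$ and $(C_1,\,C_2)$ against $(T_1,\,T_2)$ one finds
\[\begin{pmatrix} T_1\cdot C_2 & T_1\cdot C_3 \\ (T_1+T_2)\cdot C_2 & (T_1+T_2)\cdot C_3\end{pmatrix}=\begin{pmatrix}0 & 3\\ 9 & 0\end{pmatrix},\qquad \begin{pmatrix} C_1\cdot T_1 & C_1\cdot T_2 \\ C_2\cdot T_1 & C_2\cdot T_2\end{pmatrix}=\begin{pmatrix}9 & 0\\ 0 & 9\end{pmatrix},\]
and the $3\times 3$ matrix pairing $\{S_3,S_4,\mathcal{O}_4\}$ against $\{S_1,S_3,S_4\}$ is, in a suitable ordering, a positive scalar multiple of a permutation matrix, with $\mathcal{O}_4$ entering $\mathrm{Nef}^2$ precisely because numerically it is the general fiber $\pi^\ast[\mathrm{pt}]$, hence nef. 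Each such matrix being a scaled generalized permutation matrix shows at once that every listed nef generator pairs nonnegatively with $\overline{\mathrm{Eff}}_k$ (so is nef) and that the listed generators together cut out the dual cone.

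The main obstacle is the pseudoeffective upper bound for subvarieties $V$ dominating $\mathbb{A}_0^{2[3]}$: one must bound the ``horizontal'' part of $[V]$, which amounts to knowing the relative Segre classes of the locally free sheaf defining $\pi$ (equivalently, that $T_1$ really is the minimal effective twist of $\xi$), and this is delicate because $\mathbb{A}_0^{2[3]}$ --- hence $\overline{\mathcal{O}_{2,nonred}}$ --- is singular, the vertex of the cone being the scheme $\mathfrak m^2$ and its fiber being $\mathcal{O}_4$. The remedy is that the relevant Segre relations are encoded in intersection numbers already computed in Subsection~\ref{subsec:TNR}, namely the products of $\overline{\mathcal{O}_{2,nonred}}$ and $\overline{\mathcal{O}_3}$ against $H,D,E,F$ and against $c_3(\mathcal{O}(2)^{[3]})$; alternatively one pulls everything back along the minimal resolution of the quadric cone, which is the Hirzebruch surface $\mathbb{F}_2$, and works there. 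Either way the nef-cone computation above reduces to the finite pairing matrices.
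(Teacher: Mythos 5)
Your argument derives the nef cones by dualizing the effective cones, so everything rests on the upper bound $\mathrm{Eff}_k(\overline{\mathcal{O}_{2,nonred}})\subseteq\langle\cdots\rangle$, and that step is not actually proved. Three concrete problems. First, no morphism $\pi\colon\overline{\mathcal{O}_{2,nonred}}\to\mathbb{A}_0^{2[3]}$ is constructed: the natural map on this locus is the support map to $\mathbb{P}^2$, whose fibers are the punctual Hilbert schemes, and even taking the paper's loose ``$\mathbb{P}^2$-bundle'' statement at face value it is never exhibited as $\mathbb{P}(E)$ for a vector bundle on the (singular) base, so there is no relative hyperplane class $\xi$, no projective-bundle formula for $N^\ast$, and no Segre-class machinery to invoke; the paper uses the fibration only to count ranks. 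Second, even for an honest projective bundle the assertion that every irreducible $V$ meeting the open stratum has class a nonnegative combination of $\xi^{\dim V-\dim\pi(V)}\cdot\pi^\ast(\text{effective})$ is false in general -- controlling the ``horizontal'' part is exactly the hard content (already for projective bundles over curves the effective cones require genuine numerical bounds), and the statement that $T_1$ is ``the minimal effective twist of $\xi$'' is asserted rather than proved and would not by itself control the middle-codimension cycles $\mathrm{Eff}_2$; pulling back to a resolution of the base does not resolve the total space and does not supply the missing bound. Third, the stratification you use is wrong: $\overline{\mathcal{O}_3}$ and $\mathcal{O}_4$ are disjoint (the paper uses precisely this, e.g. $\mathcal{O}_4\cdot\overline{\mathcal{O}_{1,col}}\cdot H=0$ and $\overline{\mathcal{O}_3}=\mathcal{O}_3$), so the chain $\overline{\mathcal{O}_3}\supset\overline{\mathcal{O}_4}$ does not exist, which also distorts the fibration picture near the vertex.

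The paper argues in the opposite direction, which avoids this gap entirely: effectivity of $C_2,C_3,S_1,S_3,S_4,T_1,T_2$ is free because each was constructed as an explicit subvariety; nefness of the listed generators is proved directly by applying the Kleiman-transversality criterion (Lemma~\ref{lem group}) to the $\mathrm{PGL}(3)$-action restricted to the orbit closure, with special arguments for $S_4$, for $\mathcal{O}_4$ (via the numerical identity $\mathcal{O}_4=S_1+S_3$, which provides representatives meeting $\mathcal{O}_4$ properly), and for $T_1+T_2$ (curves inside $\overline{\mathcal{O}_3}$ are handled by pairing against that orbit's effective curves $C_1,C_3$); both cones then follow at once from the duality pairing. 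Your pairing matrices are in fact consistent with the paper's class computations -- since $T_1,T_2,S_1,S_3,S_4$ are restrictions of ambient classes, the projection formula gives, e.g., the $3\times 3$ matrix as $3$ times a permutation matrix -- so the dualization step of your plan is sound. If you replace the unproved effective-cone upper bound by a direct proof that $T_1$, $T_1+T_2$, $S_3$, $S_4$, $\mathcal{O}_4$, $C_1$, $C_2$ are nef (for instance via Lemma~\ref{lem group} as above), your argument closes up and essentially becomes the paper's.
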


We have given effective representatives of all classes and the duality of the cones are clear from the appropriate empty intersections other than $C_{3 } \dot (T_1 +T_2)$ and $\mathcal{O}_4 \cdot S_4$ so it remains to show those and that the nef classes are, in fact, nef.
Those intersections follow as $C_{3 } \dot (T_1 +T_2) = C_{3 } \cdot (H+ \mathcal{O}_{1,col})$ and $\mathcal{O}_4 \cdot S_4 = \mathcal{O}_4 \cdot D$ where the first intersections in each equality are taken in $\overline{\mathcal{O}_{2,nonred}}$ and the second ones are taken in $\mathbb{P}^{2[3]}$.
For the nef classes being nef, we first note that the action on the Hilbert scheme restricts to an action on this orbit closure.
Then $T_{1}$, $S_{3}$, $C_{1 }$, and $C_{2 }$ intersect both non-open suborbits in the correct dimensions so are nef. Note, showing this for $C_{2 }$ requires using both representatives we have given.

Lemma  \ref{lem group} also applies to $S_{4}$ as it intersects $\overline{\mathcal{O}_3}$ in the correct dimension but intersects $\overline{\mathcal{O}_4}$ in a curve. Since $\mathcal{O}_4 \cdot S_{4} = \frac{1}{3} \overline{\mathcal{O}_{2,nonred}} \cdot (F^2-3D) \dot D = 2$, $S_{4}$ is nef by Lemma \ref{lem group}. 
$\overline{\mathcal{O}_4}$ intersects $\overline{\mathcal{O}_3}$ in the correct dimension (in fact they are disjoint), but obviously intersects itself incorrectly.
However, since it can be seen numerically that $\mathcal{O}_4 = S_1+S_3$ in both the orbit and the whole Hilbert scheme, there are representatives of the class which intersect $\mathcal{O}_4$ correctly so it is nef by Lemma \ref{lem group}.

This leaves $T_{1}+T_{2}$ to be shown to be nef.
The obvious irreducible representatives of $T_{1}+T_{2}$ intersect $\overline{\mathcal{O}_4}$ in the correct dimension but contain $\overline{\mathcal{O}_3}$. 
If a curve intersects $\overline{\mathcal{O}_3}$ in points (or is disjoint from it), then $T_{1}+T_{2}$ intersects it nonnegatively. 
If not, the curve is contained in $\overline{\mathcal{O}_3}$ so it a positive linear combination of that orbits extremal effective curves.
In Subsection \ref{subsec:TNR Col}, we see that the effective cone of that orbit is spanned by $C_{1 }$ and $C_{3 }$. Then $(T_{1}+T_{2}) \cdot C_{1 } = 9$ and $(T_{1}+T_{2}) \cdot C_{3} = 0$. Thus, $T_{1}+T_{2}$ is nef.

\subsection{Collinear and non-reduced schemes}
Fourth, we have $\overline{\mathcal{O}_{2,col}}$. It's class is \[\overline{\mathcal{O}_{2,col}} = \overline{\mathcal{O}_{1,col}} \cdot \overline{\mathcal{O}_{1,nonred}} = (F-H)\cdot 2(2H-F) = 2(- 3A + 2B - 2C + 4D + 2E ).\] 
Note the classes of $\overline{\mathcal{O}_{1,col}}$ and $\overline{\mathcal{O}_{1,nonred}}$ are well known.

It has the structure of a $\mathbb{P}^1$-bundle over the non-reduced locus in $\mathbb{P}^{2[2]}$. You can also see this directly as a $\mathbb{P}^1 \times \mathbb{P}^1$ bundle over $\mathbb{P}^{2*}$. From either of these descriptions, we see that the Chow ring has three dimensions of curve and threefold and four dimensions of surface.  Let us start by listing all of the relevant classes.

For threefolds, let $T_{3}$ be the class of the locus of nonreduced schemes collinear with a fixed point. Let $T_{4}$ be the class of the locus of nonreduced collinear schemes with the reduced point supported on a fixed line. Finally, let $T_{5}$ by the class of the locus of nonreduced collinear schemes with the nonreduced point supported on a fixed line. 

In order to compute the classes of $T_{4}$ and $T_{5}$, we need to introduce two fourfolds and a surface and compute their classes.
First, let $S_{5}$ be the class of the locus containing a fixed point and nonreduced point supported on a fixed line containing the fixed point.
Then, we have the emptiness of the intersections of $S_{5}$ with $A$, $B$, $C$, and $E$. For some $k>0$ (which we determine later), we then have that
\[S_{5} = k(-\alpha +\beta-\delta).\]

Let $F_{1}$ be the class of the closure of the locus of schemes with a freely moving nonreduced point and a reduced point on a fixed general line. Similarly, let $F_{2}$ be the class of the closure of the locus of schemes with a freely moving reduced point and a nonreduced point supported on a fixed general line. Then the computations of the classes of $F_{1}$ and $F_{2}$ follow from the equalities $\overline{\mathcal{O}_{1,nonred}}\cdot H = F_{1}+F_{2}$, $\overline{\mathcal{O}_{1,nonred}}\cdot H\cdot \beta = 2$, and $\overline{\mathcal{O}_{1,nonred}}\cdot H \cdot \delta = 2$ and the emptiness of the following seven intersections: $F_{1} \cdot \gamma$, $F_{1} \cdot \delta$, $F_{1} \cdot \epsilon$, $F_{2} \cdot \beta$, $F_{2} \cdot \gamma$, $F_{2} \cdot \epsilon$, and $F_{1} \cdot S_{5}$.
From this, we get
\begin{align*}
F_{1} &= 2(-B+C) \text{  and  } \\
F_{2} & = 2(C-2D).\\
\end{align*}
This finally gives us
\begin{align*}
T_{3} &=  F \cdot \overline{\mathcal{O}_{2,col}} = \overline{\mathcal{O}_{1,nonred}}\cdot A =  6(- 2U + 2V + 3Y + 2Z), \\
T_{4} &= \overline{\mathcal{O}_{1,col}} \cdot F_{1} = 2(2U - V - W + 4X - 3Y)\text{,  and  } \\
T_{5} & = \overline{\mathcal{O}_{1,col}} \cdot F_{2} = 2(U - W + 4X - 6Y + Z).\\
\end{align*}

For surfaces, let $S_{6}$ be the class of the locus of nonreduced collinear schemes contained in a fixed line. Let $S_{7}$ be the class of the locus of nonreduced collinear schemes whose reduced point is at a fixed point. Let $S_{8}$ be the class of the locus of nonreduced collinear schemes whose reduced point is on a fixed line and whose non-reduced point is supported on a different fixed line. Finally, let $S_{9}$ be the class of the locus of collinear schemes containing a subscheme of length two supported at a fixed point.

In order to compute $S_{7}$ and $S_{9}$ we need to introduce two threefolds. Let $T_{6}$ be the class of the locus of schemes containing a fixed point and a freely moving nonreduced point. Similarly, let $T_{7}$ be the class of the locus of schemes containing a nonreduced fixed point and a freely moving point.
The emptiness of the following ten intersections computes those threefolds up to multiple: $T_{6} \cdot U$, $T_{6} \cdot V$, $T_{6} \cdot W$, $T_{6} \cdot X$, $T_{6} \cdot Y$, $T_{7} \cdot U$, $T_{7} \cdot W$, $T_{7} \cdot X$, $T_{7} \cdot Y$, and $T_{7} \cdot Z$. Finally, knowing that $ \overline{\mathcal{O}_{1,nonred}} \cdot E= T_{6} +T_{7}$ gives that
\begin{align*}
T_{6} &= 2(U-V) \text{ and }\\
T_{7} &= 2(U-Z). \\
\end{align*}
Finally, we have that 
\begin{align*}
S_{6} &= D \cdot \overline{\mathcal{O}_{2,col}} = 12 \alpha, \\
S_{7} &= \overline{\mathcal{O}_{1,col}} \cdot T_{6}= 2(-\alpha+\beta-\gamma+2\delta+\epsilon), \\
S_{8} &= \frac{1}{2} C \cdot \overline{\mathcal{O}_{2,col}} = - 6\alpha + 7\beta - \gamma + 2\delta + \epsilon \text{,  and  }\\
S_{9} &= \overline{\mathcal{O}_{1,col}} \cdot T_{7}= 2(- 2\alpha + \beta - \gamma + 2\delta + \epsilon).
\end{align*}

For curves, let $C_{4}$ be the class of the locus of nonreduced collinear schemes contained in a fixed line with their reduced point at a fixed point. 
Let $C_{5}$ be the class of the locus of nonreduced collinear schemes contained in a fixed line with their non-reduced point supported at a fixed point. 
Let $C_{6}$ be the class of the locus of nonreduced collinear schemes whose reduced point is at a fixed general point and whose non-reduced point is supported on a fixed line. 
Finally, let $C_{7}$ be the class of the locus of nonreduced collinear schemes whose nonreduced point is supported at a fixed general point and whose reduced point is supported on a fixed line.

In order to compute $C_{4}$ and $C_{5}$, we need to introduce two surface classes. Let $S_{10}$ be the class of the locus of schemes which contain a nonreduced point supported at a fixed point and a point on a line through that point. Let $S_{11}$ be the class of the locus of schemes containing a fixed double point.
We can then compute these classes up to multiple using the emptiness of the following intersections: $S_{10} \cdot A$, $S_{10} \cdot C$, $S_{10} \cdot D$, $S_{10} \cdot E$, $S_{11} \cdot A$, $S_{11} \cdot B$, $S_{11} \cdot C$, $S_{11} \cdot D$.
Knowing that $\overline{\mathcal{O}_{1,nonred}} \cdot Z = S_{5}+S_{10}+S_{11}$ gives the classes of the surfaces and the multiple of $S_{5}$:
\begin{align*}
S_{5} &= 2(-\alpha+\beta-\delta), \\
S_{10} &= 2(-\alpha+\delta)  \text{, and }\\
S_{11} &= 2\epsilon.
\end{align*}
Putting this together finally gives
\begin{align*}
C_{4} &= S_{5}\cdot \overline{\mathcal{O}_{1,col}}= 4(-\phi+2\psi), \\
C_{5} &= S_{10}\cdot \overline{\mathcal{O}_{1,col}}= 2(-\phi+2\psi), \\
C_{6} &= \overline{\mathcal{O}_{1,col}} \cdot H \cdot T_{6} = 2(\phi+\psi) \text{, and }\\
C_{7} &= \overline{\mathcal{O}_{1,col}} \cdot H \cdot T_{7} = 2(2\phi-\psi).\\
\end{align*}

\begin{prop}
$\text{Nef}^1\left( \overline{\mathcal{O}_{2,col}}\right)$ is spanned by $T_{3}$, $T_{4}$, and $T_{5}$, $\text{Nef}^2\left( \overline{\mathcal{O}_{2,col}}\right)$ is spanned by $S_{6}$, $S_{7}$, $S_{8}$, and $S_{9}$, and $\text{Nef}^3\left( \overline{\mathcal{O}_{2,col}}\right)$ is spanned by $C_{4}$, $C_{5}$, $C_{6}$, and $C_{7}$.
Conversely, $\text{Eff}_1\left( \overline{\mathcal{O}_{2,col}}\right)$ is spanned by $C_{4}$, $C_{5}$, and $C_{2 }$, $\text{Eff}_2\left( \overline{\mathcal{O}_{2,col}}\right)$ is spanned by $S_{2}$, $S_{6}$, $S_{7}$ and the $S_{9}$, and $\text{Eff}_3\left( \overline{\mathcal{O}_{2,col}}\right)$ is spanned by $T_{2}$, $T_{3}$, $T_{4}$, and $T_{5}$.
\end{prop}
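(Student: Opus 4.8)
The plan is to use the description of $\overline{\mathcal{O}_{2,col}}$ as an iterated projective bundle together with the restricted $\mathrm{PGL}(3)$-action. Recall that $\overline{\mathcal{O}_{2,col}}$ is a $\mathbb{P}^1\times\mathbb{P}^1$-bundle $\pi\colon\overline{\mathcal{O}_{2,col}}\to\mathbb{P}^{2*}$, hence smooth, and that the restricted action has exactly two orbits: the open orbit $\mathcal{O}_{2,col}$ and the divisor $\mathcal{O}_3\cong F(1,2;3)$, whose cones of cycles were recorded in Subsection~\ref{subsec:TNR Col}. Every class in the statement has already been given an explicit effective representative and written in the MS basis, so what remains is: (a) that each listed set spans the stated effective cone; (b) that each listed nef generator is genuinely nef; and (c) that the two lists are dual under the intersection pairing, which together with (a) also shows these cones are finitely generated, hence closed.

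For (a) I would argue by specialization inside the bundle. An irreducible curve in $\overline{\mathcal{O}_{2,col}}$ either lies in a fiber of $\pi$, so its class is a nonnegative combination of the two rulings of $\mathbb{P}^1\times\mathbb{P}^1$ --- that is, of the fiberwise classes among $C_4,C_5,C_6,C_7$ --- or it dominates a line $\ell\subset\mathbb{P}^{2*}$, in which case restricting the bundle over $\ell$ gives a toric threefold (an iterated $\mathbb{P}^1$-bundle over $\mathbb{P}^1$) on which the curve degenerates to an effective sum of torus-invariant curves, each of which pushes forward to one of the listed generators or to $C_2$. The analogous degeneration one and two dimensions up bounds $\mathrm{Eff}_2$ and $\mathrm{Eff}_3$ above by the corresponding ``product'' cycles (pullbacks of linear subspaces of $\mathbb{P}^{2*}$ intersected with fiberwise ruling cycles); the Chow-ring relations of the $\mathbb{P}^1\times\mathbb{P}^1$-bundle, together with the auxiliary classes $F_1,F_2,T_6,T_7,S_5,S_{10},S_{11}$ introduced above precisely to pin down coefficients, then identify those product cycles with the classes in the statement and show the apparently extra generators are redundant.

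With spanning sets in hand, (b) is almost automatic: by construction each candidate nef class has a representative whose generic point lies in the open orbit $\mathcal{O}_{2,col}$ and is therefore not contained in the divisor $\mathcal{O}_3$, and a short dimension count shows such a representative meets both orbits in at most the expected dimension, so Lemma~\ref{lem group}, with the set of excess orbits empty, immediately gives that the class is nef. (This is exactly what fails for the non-nef effective classes such as $S_2$, every representative of which lies inside $\mathcal{O}_3$.) Finally, for (c) I would compute the pairing matrices $\text{threefold}\cdot\text{curve}$ and $\text{surface}\cdot\text{surface}$ among the listed classes in a fixed basis of $A^*(\overline{\mathcal{O}_{2,col}})$; the ``appropriate empty intersections'' are exactly the zero entries, and their pattern, together with a handful of nonzero values, exhibits each listed nef generator as dual to a facet of $\overline{\mathrm{Eff}}$ and conversely, which is all that duality requires once $\overline{\mathcal{O}_{2,col}}$ is known to be smooth.

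The main obstacle is step (a): ruling out exotic extremal effective cycles, most delicately in the middle dimension, where a surface in this $4$-fold can dominate $\mathbb{P}^{2*}$ or be a genuine multisection of $\pi$ and has no a priori reason to decompose. Making the degeneration-inside-the-bundle argument rigorous there --- equivalently, showing that the obvious products of fiber cycles and pullbacks of linear subspaces already span the effective cones --- is the real content; once that is established, everything else is bounded linear algebra with the intersection pairings plus bookkeeping through the chain of auxiliary classes, which is routine if lengthy.
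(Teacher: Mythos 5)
Your steps (b) and (c) are essentially the paper's proof: the restricted action on $\overline{\mathcal{O}_{2,col}}$ has only the one proper suborbit $\overline{\mathcal{O}_3}$, each proposed nef generator has a representative meeting it in the expected dimension, so Lemma \ref{lem group} gives nefness with no excess orbits to check, and the duality of the two listed spans is a finite computation with the intersection numbers already recorded in that subsection. The genuine gap is your step (a). You treat the statement that the listed classes span the effective cones as an independent input, to be proved by degenerating cycles inside the $\mathbb{P}^1\times\mathbb{P}^1$-bundle, and you yourself concede that this degeneration argument is not carried out in the middle dimension. As sketched it is not a proof: an irreducible surface that is a multisection of $\pi$ has no a priori reason to be numerically an effective combination of ``products'' of fiber classes with pullbacks of linear subspaces, and effective cones of projective bundles are not in general generated by such product cycles, so ruling out exotic extremal surfaces genuinely requires an argument you have not supplied. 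As written, the proposal therefore does not establish the effective cones.

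The missing observation is that step (a) is unnecessary. Since $\overline{\mathcal{O}_{2,col}}$ is smooth, $\mathrm{Nef}^k$ is by definition the dual of $\overline{\mathrm{Eff}}_k$ under the intersection pairing. Let $E'$ denote the cone spanned by the listed effective classes in a given dimension and $N'$ the cone spanned by the listed nef classes in the complementary codimension. You already have $E'\subseteq\overline{\mathrm{Eff}}$ (explicit representatives were constructed for every listed class), $N'\subseteq\mathrm{Nef}$ (Lemma \ref{lem group}), and $N'=(E')^{\vee}$ (your pairing matrices). Then $\mathrm{Nef}=\overline{\mathrm{Eff}}^{\vee}\subseteq (E')^{\vee}=N'\subseteq\mathrm{Nef}$ forces $\mathrm{Nef}=N'$, and dually $\overline{\mathrm{Eff}}=\mathrm{Nef}^{\vee}=(N')^{\vee}=E'$, since $E'$ is finitely generated and hence closed. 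This sandwich is exactly how the paper concludes; no classification of extremal effective cycles inside the orbit closure is needed, and what you flagged as ``the real content'' evaporates once nefness of the dual generators is in hand.
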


Again, we just need to show that the proposed nef classes are actually nef as the dualities of the cones are clear from the emptiness of the appropriate intersections, but that follows immediately from Lemma \ref{lem group} as all of the nef extremal rays intersect the only proper suborbit in the correct dimension.

\subsection{Collinear schemes}
Fifth, we have $\overline{\mathcal{O}_{1,col}}$. The class of $\overline{\mathcal{O}_{1,col}}$ is again well known and is \[\mathcal{O}_{1,col} = F-H.\] It has the structure of a $\mathbb{P}^{1[3]} \simeq \mathbb{P}^3$ bundle over $\mathbb{P}^{2*} \simeq \mathbb{P}^2$. As such, it has two dimensions of curve and fourfold and three dimensions of surface and threefold. Let us start by listing all of the relevant classes.
For fourfolds, let $F_{3}$ be the class of the locus of three points collinear with a fixed point. Let $F_{4}$ be the class of the locus of subschemes that are collinear and incident to a fixed line. Finally let $F_{5}$ be the class of the locus of nonreduced collinear schemes. Then we can see that
\begin{align*}
F_{3} &= \frac{1}{3} \cdot F \cdot \overline{\mathcal{O}_{1,col}} = A, \\
F_{4} &=  H \cdot \overline{\mathcal{O}_{1,col}} = B - C  + 2D + E,  \text{  and  }\\
F_{5} &= \overline{\mathcal{O}_{1,nonred}} \cdot \overline{\mathcal{O}_{1,col}} = \overline{\mathcal{O}_{2,col}}= 2(- 3A + 2B - 2C + 4D + 2E).
\end{align*}

For threefolds, let $T_{8}$ be the class of the locus of schemes contained in a fixed line. Let $T_{9}$ be the class of the locus of collinear schemes incident to each of a pair of distinct fixed lines. Let $T_{10}$ be the class of the locus of collinear schemes containing a fixed point.  Then we have that 
\begin{align*}
T_{8} &= \frac{1}{9}\cdot F^2 \cdot \overline{\mathcal{O}_{1,col}} = Y, \\
T_{9} &= C \cdot \overline{\mathcal{O}_{1,col}} = U - W + 4X + Z, \\
T_{10} &= E \cdot \overline{\mathcal{O}_{1,col}} = - U + V + Z \text{,  and  }\\
\end{align*}

For surfaces, let $S_{12}$ be the class of the locus of schemes contained in a fixed line and containing a fixed point.  Let $S_{13}$ be the class of the locus of collinear schemes containing a fixed point and incident to a general fixed line. 
Finally, let $S_{14}$ be the class of the locus of collinear schemes incident to each of a set of three general lines. 

Then we have that 
\begin{align*}
S_{12} &= \frac{1}{9}\cdot H \cdot F^2 \cdot \overline{\mathcal{O}_{1,col}} = \alpha, \\
S_{13} &= H \cdot E \cdot \overline{\mathcal{O}_{1,col}} = \beta - \gamma + 2\delta + \epsilon\text{, and }\\
S_{14} &= W \cdot \overline{\mathcal{O}_{1,col}} = 3\beta.\\
\end{align*}

For curves, let $C_{8 }$ the class of the locus of schemes contained in a fixed line and containing two distinct fixed points.
\begin{align*}
C_{8 } &= \alpha \cdot H =  \frac{1}{9}\cdot \overline{\mathcal{O}_{1,col}} \cdot H^2\cdot F^2= -\phi + 2\psi 
\end{align*}


\begin{prop}
$\text{Nef}^1\left(\overline{\mathcal{O}_{1,col}}\right)$ is spanned by $F_{3}$ and $F_{4}$, $\text{Nef}^2\left(\overline{\mathcal{O}_{1,col}}\right)$ is spanned by  $T_{8}$, $T_{9}$, and $T_{10}$, $\text{Nef}^3\left(\overline{\mathcal{O}_{1,col}}\right)$ is spanned by $S_{12}$, $S_{13}$, and $S_{14}$, and $\text{Nef}^4\left(\overline{\mathcal{O}_{1,col}}\right)$ is spanned by $C_{8 }$ and $C_{6}+C_{7}$.
Conversely, $\text{Eff}_1\left(\overline{\mathcal{O}_{1,col}}\right)$ is spanned by $C_{2 }$ and $C_{8 }$, $\text{Eff}_2\left(\overline{\mathcal{O}_{1,col}}\right)$ is spanned by $S_{2}$, $S_{9}$, and $S_{12}$, $\text{Eff}_3\left(\overline{\mathcal{O}_{1,col}}\right)$ is spanned by $T_{2}$, $T_{8}$, and $T_{10}$, and $\text{Eff}_4\left(\overline{\mathcal{O}_{1,col}}\right)$ is spanned by $F_{3}$ and $F_{5}$,
\end{prop}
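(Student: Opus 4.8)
The plan is to follow the same four--step template used for the four preceding orbit closures, now exploiting the description of $\overline{\mathcal{O}_{1,col}}$ as a $\mathbb{P}^{1[3]}\simeq\mathbb{P}^3$--bundle $\pi\colon\overline{\mathcal{O}_{1,col}}\to(\mathbb{P}^2)^{*}\simeq\mathbb{P}^2$ over the space of lines, whose fiber over $[\ell]$ is $\ell^{[3]}$. This fixes $A^{*}(\overline{\mathcal{O}_{1,col}})$ as a free module of the expected ranks $2,3,3,2$ in codimensions $1,2,3,4$, and the first step is to record each class named in the statement ($F_3,F_4,F_5$; $T_8,T_9,T_{10}$; $S_{12},S_{13},S_{14}$; $C_8$; together with the auxiliary boundary curves $C_2,C_6,C_7$) both as an explicit locus and via the formula for its class already computed above. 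A key observation to extract at this stage is that the boundary divisor $F_5=\overline{\mathcal{O}_{2,col}}$ is the restriction $\iota^{*}\overline{\mathcal{O}_{1,nonred}}$ of an honest divisor on $\mathbb{P}^{2[3]}$ (this is exactly the identity $F_5=\overline{\mathcal{O}_{1,col}}\cdot\overline{\mathcal{O}_{1,nonred}}$), which is what makes intersection numbers of $F_5$ against curves lying inside it computable from Table~\ref{tab:divsandcurves} rather than set--theoretically; similarly $F_3=\tfrac13\,\iota^{*}F$.

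Granting, as for the earlier orbit closures, that the listed effective generators are effective because they are by construction classes of explicit subvarieties, the statement reduces in each of codimensions $1,\dots,4$ to two tasks: (i) showing the proposed nef cone and the proposed effective cone of complementary (co)dimension are exactly dual under the intersection pairing on $\overline{\mathcal{O}_{1,col}}$; and (ii) showing the proposed nef generators are nef. For (i) I would write down the pairing matrix between the two sets of generators (one $2\times 2$, two $3\times 3$, one $2\times 2$). Most entries vanish because the defining loci are disjoint or meet in too small a dimension (for instance $F_3\cdot C_8=0$ and $S_{14}\cdot C_8=0$), the remaining entries are small positive integers obtained either by a transversal point count as in Section~\ref{sec:msbasis} or, when one locus lies inside $F_5$, by pulling back from $\mathbb{P}^{2[3]}$ as above (e.g.\ $F_3\cdot(C_6+C_7)=F_5\cdot C_8=4$ while $F_5\cdot(C_6+C_7)=0$), and checking that each such matrix is nonsingular and has the right sign pattern to realize the cones as genuine duals is then linear algebra.

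For (ii) I would apply Lemma~\ref{lem group} to the restriction of the $\mathrm{PGL}(3)$--action to $\overline{\mathcal{O}_{1,col}}$, whose only proper orbits are $\mathcal{O}_{2,col}$ (codimension $1$) and $\mathcal{O}_3$ (codimension $2$, contained in $\overline{\mathcal{O}_{2,col}}$). Each of $F_3,F_4$, $T_8,T_9,T_{10}$, $S_{12},S_{13},S_{14}$, and $C_8$ has an irreducible representative meeting $\overline{\mathcal{O}_{2,col}}$ and $\overline{\mathcal{O}_3}$ in the expected dimension, so Lemma~\ref{lem group} applies once those dimension counts are verified. The only generator needing the extra argument is $C_6+C_7\in\text{Nef}^4$: its natural representatives lie in $\overline{\mathcal{O}_{2,col}}$ and are disjoint from $\overline{\mathcal{O}_3}$, so the only $4$--dimensional subvariety living in a bad orbit closure one must pair against is $\overline{\mathcal{O}_{2,col}}$ itself, and $(C_6+C_7)\cdot\overline{\mathcal{O}_{2,col}}=(C_6+C_7)\cdot\iota^{*}\overline{\mathcal{O}_{1,nonred}}=0$; this is handled exactly as $T_1+T_2$ was in Subsection~\ref{subsec:TNR}, and, if it is convenient, a curve in $\overline{\mathcal{O}_3}$ that arises can be rewritten as a nonnegative combination of the extremal effective curves $C_1,C_3$ of $\overline{\mathcal{O}_3}$ computed in Subsection~\ref{subsec:TNR Col}. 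Once (i) and (ii) hold in every codimension, the usual formal argument closes the proof: writing $\mathcal N$, $\mathcal P$ for the proposed nef and effective cones, $\text{Nef}=\overline{\text{Eff}}^{\,\vee}\subseteq\mathcal P^{\vee}=\mathcal N\subseteq\text{Nef}$ forces $\text{Nef}=\mathcal N$, and then $\overline{\text{Eff}}\subseteq\mathcal N^{\vee}=\mathcal P\subseteq\text{Eff}$ forces $\text{Eff}=\mathcal P$ and in particular shows it is closed.

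The main obstacle is bookkeeping rather than anything conceptual: for each of the thirteen classes one must actually produce an irreducible representative whose intersection with $\overline{\mathcal{O}_{2,col}}$ and $\overline{\mathcal{O}_3}$ sits in the expected dimension (or, for $C_6+C_7$, in exactly one more), so that the hypotheses of Lemma~\ref{lem group} are literally met, and one must be careful with the non--transverse intersections against $F_5$, evaluating them through the divisorial identity $F_5=\iota^{*}\overline{\mathcal{O}_{1,nonred}}$. Beyond that, nothing new is needed apart from the projective bundle structure, the cycle structures of $\overline{\mathcal{O}_{2,col}}$ and $\overline{\mathcal{O}_3}$ already established, and Lemma~\ref{lem group}.
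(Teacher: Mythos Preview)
Your proposal is correct and follows essentially the same approach as the paper: establish duality via the emptiness of the appropriate intersections (with the one exception $F_5\cdot(C_6+C_7)$ handled through the identity $F_5=\iota^*\overline{\mathcal{O}_{1,nonred}}$), then verify nefness of each proposed generator via Lemma~\ref{lem group}, the only class requiring the excess--dimension clause being $C_6+C_7$, which is disjoint from $\overline{\mathcal{O}_3}$ and has $(C_6+C_7)\cdot\overline{\mathcal{O}_{2,col}}=0$. One minor slip: the extremal effective curves of $\overline{\mathcal{O}_3}$ are $C_2$ and $C_3$, not $C_1$ and $C_3$, though as you note this aside is not actually needed.
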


The duality of these cones is again to the emptiness of the appropriate intersections except in the case of $F_5 \cdot (C_{6 } + C_{7 })$ where it follows from that intersection in this orbit equaling the intersection $\overline{\mathcal{O}_{1,nonred}}\cdot (C_{6 } + C_{7 })$ in the full Hilbert scheme.
Then it suffices to justify why each extremal ray of the nef cones are nef. 
All of the relevant classes intersect both suborbits of the action in the correct dimension so are nef by Lemma \ref{lem group} except for $C_6+C_7$. 
The lemma applies to this curve as it is disjoint from $\overline{\mathcal{O}_3}$ and intersects $\overline{\mathcal{O}_{2,col}}$ in a curve. 
Since $\overline{\mathcal{O}_{2,col}} \cdot (C_6+C_7) = 0$, it is nef.

\subsection{Non-reduced schemes}
Sixth, we have $\overline{\mathcal{O}_{1,nonred}}$.
Its class is well known and is \[\overline{\mathcal{O}_{1,nonred}} =2(2H-F).\]
Every cycle we need intersects this orbit in the correct dimension, so we omit its cycle structure.

\subsection{The open orbit}
Finally, we have $\overline{\mathcal{O}_0} = \mathbb{P}^{2[3]}$, whose Chow ring was recalled in Section \ref{sec:msbasis}. This is the space we are studying.

\section{The nef and effective cones}
In this section, we compute the nef (effective) cones in codimensions 2 and 3 (dimension 2 and 3).


\begin{thm}
$\mathrm{Eff}_2\left(\mathbb{P}^{2[3]}\right)$ is spanned by the following six classes: $\alpha$, $\epsilon$, $-\alpha+\delta$, $-\alpha+\beta-\delta$, $\alpha-\beta+\gamma-\delta$, and $-2\alpha+\beta-\gamma+2\delta+\epsilon$.\\
Conversely, $\mathrm{Nef}^2\left(\mathbb{P}^{2[3]}\right)$ is spanned by the following six classes: $B$, $C$, $D$, $E$, $A+B$, and $A+E$.
\end{thm}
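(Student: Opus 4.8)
The plan is to prove the two statements about $\mathrm{Eff}_2$ and $\mathrm{Nef}^2$ in tandem, leveraging the duality between them under the intersection pairing (Table \ref{tab:fourfoldsandsurfaces}) together with the orbit analysis of Section 5 and the nefness criterion of Lemma \ref{lem group}. First I would establish the inclusion $\mathrm{Eff}_2 \supseteq \langle \alpha, \epsilon, -\alpha+\delta, -\alpha+\beta-\delta, \alpha-\beta+\gamma-\delta, -2\alpha+\beta-\gamma+2\delta+\epsilon\rangle$ by exhibiting an effective surface in each of the six rays. The classes $\alpha$, $\epsilon$ are already MS basis surface classes with evident effective representatives; the remaining four are (up to positive scalar) exactly the orbit-derived surface classes computed in Section 5 — namely $S_{10}=2(-\alpha+\delta)$ and $S_5=2(-\alpha+\beta-\delta)$ from $\overline{\mathcal{O}_{2,col}}$, $S_3=3(\alpha-\beta+\gamma-\delta)$ from $\overline{\mathcal{O}_{2,nonred}}$, and $S_9=2(-2\alpha+\beta-\gamma+2\delta+\epsilon)$ again from $\overline{\mathcal{O}_{2,col}}$ — each of which was exhibited there as the class of an explicit locus. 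This gives one inclusion essentially for free from the prior sections.

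Next I would prove $\mathrm{Nef}^2 \supseteq \langle B, C, D, E, A+B, A+E\rangle$ using Lemma \ref{lem group}: for each generator I must check that it (or a representative) meets every orbit closure $\overline{\mathcal{O}_{2,col}}$, $\overline{\mathcal{O}_{2,nonred}}$, $\overline{\mathcal{O}_3}$, $\overline{\mathcal{O}_4}$ in the expected dimension or one higher, and in the latter case that it intersects all surfaces in the relevant orbit closure nonnegatively. Here the work from Section 5 does most of the job: the nef cones of each orbit closure were computed there, so the nonnegativity checks reduce to pairing the generator with the finitely many extremal effective surface classes ($S_1,\dots,S_{14}$, $C_1$, and $\mathcal{O}_4$) listed in Section 5, which are small explicit intersection numbers. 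The classes $B$, $C$, $D$, $E$ are (multiples of) MS fourfolds with transparent moving representatives; $A+B$ and $A+E$ require the small extra argument — exactly as in the orbit propositions — that although $A$ alone may meet some orbit badly, the combination has representatives meeting each orbit correctly (e.g. decomposing $A+B$ or $A+E$ into orbit-adapted effective fourfolds).

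It then remains to prove the reverse inclusions, i.e. that the six exhibited classes actually span (not just lie in) each cone. The cleanest route is to verify that the proposed $\mathrm{Eff}_2$ generators and the proposed $\mathrm{Nef}^2$ generators are dual polyhedral cones: compute the $6\times 6$ matrix of intersection numbers $\mathrm{Nef}\text{-generator} \cdot \mathrm{Eff}\text{-generator}$ using Table \ref{tab:fourfoldsandsurfaces} and check it is entrywise nonnegative with enough zeros that each nef generator is cut out as a facet-supporting functional by a subset of five effective generators spanning a hyperplane (and symmetrically). Since $A^2_\mathbb{Q} \cong \mathbb{Q}^5$, a cone with six extremal rays is as expected, and the zero-pattern of that pairing matrix simultaneously forces extremality of all twelve rays and shows the two cones are exactly dual. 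Because $\mathrm{Nef}^2$ is by definition the dual of $\overline{\mathrm{Eff}}_2$, once we know $\overline{\mathrm{Eff}}_2$ equals the polyhedral cone on the six classes, $\mathrm{Nef}^2$ is its dual and closedness of $\mathrm{Eff}_2$ is automatic.

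The main obstacle I anticipate is the containment $\mathrm{Eff}_2 \subseteq \langle \text{six classes}\rangle$: producing effective surfaces is easy, but ruling out effective classes \emph{outside} the proposed cone is the genuine content. This is precisely what Lemma \ref{lem group} is designed to circumvent — by showing each proposed $\mathrm{Nef}^2$ generator is genuinely nef, any effective surface must pair nonnegatively with all six nef generators, which (given the dual-cone computation above) forces its class into the six-ray cone. So the crux is really the nefness verifications in the second paragraph, and within those, the orbit-closure-contained cases for $A+B$ and $A+E$ where Lemma \ref{lem group}'s hypothesis about meeting orbits ``one dimension higher than expected'' must be checked against the explicit extremal effective curves/surfaces of $\overline{\mathcal{O}_{2,col}}$ and $\overline{\mathcal{O}_{1,col}}$ tabulated in Section 5.
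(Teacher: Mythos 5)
Your proposal follows essentially the same route as the paper's proof: nefness of $B$, $C$, $D$, $E$, $A+B$, $A+E$ via Lemma \ref{lem group} and the orbit-by-orbit dimension checks (pairing against the extremal effective surfaces of the orbit closures when an intersection is one dimension too big), effectivity of the six dimension-2 classes from the explicit representatives $S_3$, $S_5$, $S_9$, $S_{10}$, etc.\ already computed in Section 5, and a final numerical dual-cone verification that forces both containments to be equalities. This matches the paper's argument in structure and in all substantive steps.
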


\begin{proof}
$C$ and $E$ intersect each orbit in the correct dimension so are nef by Lemma \ref{lem group}.

$B$ and $D$ intersect $\mathcal{O}_0$, $\mathcal{O}_{1,nonred}$, $\mathcal{O}_{1,col}$, $\mathcal{O}_{2,col}$, $\mathcal{O}_{2,nonred}$, and $\mathcal{O}_{3}$ in the correct dimension and intersect $\mathcal{O}_{4}$ in one dimension higher than they should. 
The only surface in $\mathcal{O}_{4}$  is itself and $\mathcal{O}_4 \cdot D =0$ and $\mathcal{O}_4 \cdot B =3$ so $B$ and $D$ are nef by the lemma.

$A$ by itself intersects $\mathcal{O}_{1,nonred}$, $\mathcal{O}_{2,nonred}$, and $\mathcal{O}_{4}$ in the correct dimension and $\mathcal{O}_{1,col}$, $\mathcal{O}_{2,col}$, and $\mathcal{O}_{3}$ in one dimension higher than it should. 

If follows that $A+B$, and $A+E$ both intersect every orbit in at most one dimension greater than expected and intersect $\mathcal{O}_{1,nonred}$ in correct dimension. Pairing them against the extremal cycles of the other orbits which we computed in the previous section, we see that they are all nef by the lemma.

We have previously constructed effective representatives of $\alpha$, $\epsilon$, $\alpha-\beta+\gamma-\delta$, $-\beta+4\delta+\epsilon$, and $-2\alpha+\beta-\gamma+2\delta+\epsilon$.
Similarly, we have given effective representatives of multiples of  $-\alpha+\delta$ and $-\alpha+\beta-\delta$ so all of the effective classes are in fact, effective.

The duality of the cones these generate is a numeric calculation left as an exercise to the reader.
\end{proof}

\begin{thm}
$\mathrm{Eff}_3\left(\mathbb{P}^{2[3]}\right)$ is spanned by the following seven classes: $Y$, $-U+V+Z$, $3U-2V-W+4X-6Y-Z$, $W-3X+3Y$,  $X-3Y$, $U-V$, and  $U-Z$.\\
Conversely, $\mathrm{Nef}^3\left(\mathbb{P}^{2[3]}\right)$ is spanned by the following eight classes: $U$, $V$, $W$, $X$, $Z$, $V+Y$, $X+Y$, and $2Y+Z$.
\end{thm}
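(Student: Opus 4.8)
The plan is to argue exactly as in the proof of the previous theorem: produce effective representatives for the seven proposed generators of $\mathrm{Eff}_3$, verify nefness of the eight proposed generators of $\mathrm{Nef}^3$ by applying Lemma~\ref{lem group} to the $\mathrm{PGL}(3)$-action, and then check with Table~\ref{tab:threefoldsandthreefolds} that the two cones so produced are dual. Writing $P$ for the cone spanned by the seven effective classes and $Q$ for the cone spanned by the eight nef classes, the formal skeleton is: once we know $P\subseteq\overline{\mathrm{Eff}}_3$, $Q\subseteq\mathrm{Nef}^3$, and $Q^{\vee}=P$, the identity $\mathrm{Nef}^3=\overline{\mathrm{Eff}}_3^{\vee}$ forces $\mathrm{Nef}^3=\overline{\mathrm{Eff}}_3^{\vee}\subseteq P^{\vee}=Q\subseteq\mathrm{Nef}^3$, hence $\mathrm{Nef}^3=Q$, and then $\overline{\mathrm{Eff}}_3=(\mathrm{Nef}^3)^{\vee}=Q^{\vee}=P$, so in particular $\mathrm{Eff}_3$ is closed and equals $P$.

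For the effective side, six of the seven generators are positive rational multiples of cycles already exhibited in the previous section: $Y=T_8$ (schemes contained in a fixed line), $-U+V+Z=T_{10}$ (collinear schemes through a fixed point), $3U-2V-W+4X-6Y-Z=\tfrac13\overline{\mathcal{O}_3}$ (the codimension-three orbit closure of collinear schemes at one point), $W-3X+3Y=\tfrac13 T_1$ (totally non-reduced schemes supported on a fixed line), $U-V=\tfrac12 T_6$ and $U-Z=\tfrac12 T_7$ (schemes with a fixed reduced point and a freely moving non-reduced point, respectively a fixed non-reduced point and a freely moving reduced point). The remaining class $X-3Y$ lies in no proper orbit closure, so for it one must write down an honest irreducible $3$-fold of generically reduced non-collinear triples cut out by incidence conditions and compute its intersection numbers against the threefold classes $U,V,W,X,Y,Z$ to confirm the class is $X-3Y$. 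This gives $P\subseteq\overline{\mathrm{Eff}}_3$.

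For nefness, two of the eight generators are immediate from Section~3: by the Chern/Schur-class formulas, $\tfrac12 c_3(\mathcal{O}(2)^{[3]})=X+Y$ and $c_{1,1,1}(\mathcal{O}(2)^{[3]})=2Y+Z$, and since $\varphi_2$ is a morphism these are pullbacks of nef Schubert classes on the associated Grassmannian, hence nef. The remaining six, $U$, $V$, $W$, $X$, $Z$, $V+Y$, I would treat as $B$, $C$, $D$, $E$, $A+B$, $A+E$ were treated in the codimension-two argument: fix a geometric representative, compute the dimension of its intersection with each of the seven orbits, and observe that this dimension is the expected one for every orbit except possibly a few, where it is exactly one larger; for those orbits, pair the class against the finitely many extremal effective $3$-cycles of the corresponding orbit closure (the curves $C_i$ and threefolds $T_i$ of the previous section) and check non-negativity, whereupon Lemma~\ref{lem group} yields nefness. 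Only $\overline{\mathcal{O}_3}$, $\overline{\mathcal{O}_{2,col}}$, $\overline{\mathcal{O}_{2,nonred}}$, and $\overline{\mathcal{O}_{1,col}}$ carry non-trivial cones of effective $3$-cycles, so these are the only ones against which a pairing is required; the two-dimensional orbit $\mathcal{O}_4$ and the divisorial orbit $\mathcal{O}_{1,nonred}$ contribute nothing.

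The last step is the duality computation $Q^{\vee}=P$: from Table~\ref{tab:threefoldsandthreefolds} and the classes recorded above, one verifies that each of the eight nef generators pairs non-negatively with each of the seven effective generators and that the facet structures match, which is a finite linear-algebra check. I expect the nefness step to be the principal obstacle — the bookkeeping of intersection dimensions over eight candidate classes and seven orbits is delicate, and in each ``one dimension too large'' case one must pair against the \emph{complete} list of extremal effective $3$-cycles of the relevant orbit closure; any omission would either leave nefness unproved or introduce a spurious extremal ray, which is exactly the inconsistency that the concluding duality check is designed to detect.
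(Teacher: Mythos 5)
Most of your proposal tracks the paper's argument: the dual-cone sandwich is exactly the intended logic, your identifications $Y=T_8$, $-U+V+Z=T_{10}$, $3U-2V-W+4X-6Y-Z=\tfrac13\overline{\mathcal{O}_3}$, $W-3X+3Y=\tfrac13 T_1$, $U-V=\tfrac12 T_6$, $U-Z=\tfrac12 T_7$ are correct, and the nefness step via Lemma \ref{lem group} (with $U,V,W,X,Z$ meeting every orbit in expected dimension and $Y$ failing by one only along $\overline{\mathcal{O}_{1,col}}\supset\overline{\mathcal{O}_{2,col}}\supset\overline{\mathcal{O}_3}$) is the paper's proof. Your treatment of $X+Y=\tfrac12 c_3\left(\mathcal{O}(2)^{[3]}\right)$ and $2Y+Z=c_{1,1,1}\left(\mathcal{O}(2)^{[3]}\right)$ as pullbacks of Schubert classes under the morphism $\varphi_2$ is a legitimate alternative (projection formula plus nefness of Schubert classes), consistent with the paper's later statement that these classes are pliant, whereas the paper runs them through the orbit lemma like the others.

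The genuine gap is the seventh effective class $X-3Y$. You assert it lies in no proper orbit closure and defer to ``an honest irreducible $3$-fold of generically reduced non-collinear triples cut out by incidence conditions,'' but you never produce such a cycle, and the assertion is wrong: the paper's representative is the locus of non-reduced length-three schemes supported on a fixed line, which sits inside the divisorial orbit closure $\overline{\mathcal{O}_{1,nonred}}$ (precisely the one whose cycle structure the paper omitted, which is why this class did not appear in Section 5). Its class is forced to be a multiple of $X-3Y$ by the vanishing of its pairings with $U,V,W,Y,Z$, and pinning down the multiple requires a local-coordinates multiplicity computation as in the calculation of $\left[\overline{\mathcal{O}_{2,nonred}}\right]$, not an incidence count. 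Without an explicit effective representative of $X-3Y$, the inclusion $P\subseteq\overline{\mathrm{Eff}}_3$ is unproved, so the sandwich $\mathrm{Nef}^3=\overline{\mathrm{Eff}}_3^{\vee}\subseteq P^{\vee}=Q$ is unavailable and neither asserted equality follows; this is the one new geometric input the theorem needs beyond Section 5. A smaller inaccuracy: $\overline{\mathcal{O}_{1,nonred}}$ certainly carries effective $3$-cycles (the $X-3Y$ representative among them); the correct reason it requires no pairing in the nefness step is that all eight candidate classes meet $\mathcal{O}_{1,nonred}$ in the expected dimension, so Lemma \ref{lem group} never asks for cycles inside it, and likewise $\overline{\mathcal{O}_{2,nonred}}$ need not be checked since $Y$ meets that orbit in the expected dimension.
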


\begin{proof}
$U$, $V$, $W$, $X$, and $Z$ intersect each orbit in the correct dimension. 

$Y$ by itself intersects $\mathcal{O}_0$, $\mathcal{O}_{1,nonred}$, $\mathcal{O}_{2,nonred}$, and $\mathcal{O}_{4}$ in the correct dimension and intersects $\mathcal{O}_{1,col}$, $\mathcal{O}_{2,col}$, and $\mathcal{O}_{3}$ in one dimension higher than it should.

If follows that $U$, $V$, $W$, $X$, $Z$, $V+Y$, $X+Y$, and $2Y+Z$ all all intersect every orbit in at most one dimension greater than expected and all intersect $\mathcal{O}_{1,nonred}$ in correct dimension. Pairing them against the extremal cycles of the other orbits which we computed in the previous section, we see that they are all nef by the lemma.

We have previously constructed effective representatives of $Y$ and $-U+V+Z$ and effective represetatives of multiples of $3U-2V-W+4X-6Y-Z$, $W-3X+3Y$, $U-V$ and $U-Z$.
Up to multiple, an effective representative of the class $X-3Y$ is given by the locus of schemes that are non-reduced and supported on a fixed line.
One can see this class through the empty intersection of this locus with $U$, $V$, $W$, $Y$, and $Z$.
That this locus give that three class is a computation in local coordinates as in the computation of the class of $\overline{\mathcal{O}_{2,nonred}}$.

The duality of the two cones is a numerical calculation which is left as an exercise to the reader. 
\end{proof}



\section{The pliant cone via 2-very ampleness}
In this section, we give sufficient conditions for the 2-very ampleness of vector bundles.
The Schur classes of the tautological bundle on $\mathbb{P}^{2[3]}$ of a 2-very ample vector bundle on $\mathbb{P}^2$ are pliant so we then apply those conditions to construct inner bounds on the pliant cones of cycles.

We first have the following proposition giving a sufficient condition for 2-very ampleness using short exact sequences.

\begin{prop}\label{Prop: cokernel ampleness}
Let $V$ be a semi-stable vector bundle defined by the short exact sequence \[0 \to A \to B \to V \to 0\] where $h^2(A) = h^2(A\otimes I_Z) = 0$ and $B$ is a vector bundle with no higher cohomology that is k-very ample. Then $V$ is k-very ample. 

In particular, the proposition holds if $\mu(A) > -3$, $B$ is 2-very ample, and $B$ is general in its moduli space.
\end{prop}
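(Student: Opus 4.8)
The plan is to chase cohomology through the long exact sequence attached to the defining short exact sequence, both in its original form and after twisting by the ideal sheaf $I_Z$ of a length-$2$ subscheme $Z \subset \mathbb{P}^2$. First I would observe that, since $B$ has no higher cohomology, the sequence $0 \to A \to B \to V \to 0$ yields $H^0(B) \to H^0(V) \to H^1(A) \to H^1(B) = 0$, and from $h^2(A) = 0$ we get $H^1(V) \cong H^1(B)/\operatorname{im} = 0$ as well after one more step, so in particular $h^0(V) = h^0(B) - h^0(A) + h^1(A)$. The key is to run the same argument with everything tensored by $I_Z$: because tensoring by a rank-one torsion-free sheaf stays exact in this context (equivalently, restrict the sequence to the exact sequence on the length-$2$ subscheme and compare), one gets
\[
0 \to H^0(A \otimes I_Z) \to H^0(B \otimes I_Z) \to H^0(V \otimes I_Z) \to H^1(A \otimes I_Z) \to H^1(B \otimes I_Z).
\]
Now $B$ being $2$-very ample means precisely $h^0(B \otimes I_Z) = h^0(B) - 2\operatorname{rk}(B)$; one must also check $h^1(B \otimes I_Z) = 0$, which follows from $B$ having no higher cohomology together with $2$-very ampleness (the evaluation $H^0(B) \to H^0(B|_Z)$ is surjective, so the connecting map into $H^1(B \otimes I_Z)$ and hence that group vanishes, using $h^1(B) = 0$).

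Next I would handle the $A$-terms. The hypothesis $h^2(A) = h^2(A \otimes I_Z) = 0$ is exactly what is needed to propagate vanishing to the right end: from $0 \to A \otimes I_Z \to A \to A|_Z \to 0$ and $h^2(A|_Z) = 0$ (a length-$2$ scheme), together with the $H^1$ comparison, one controls $h^1(A \otimes I_Z)$ in terms of $h^1(A)$ and $h^0(A|_Z) = 2\operatorname{rk}(A)$. Combining these, the long exact sequence above forces
\[
h^0(V \otimes I_Z) = h^0(B \otimes I_Z) - h^0(A \otimes I_Z) + h^1(A \otimes I_Z) = \bigl(h^0(B) - 2\operatorname{rk} B\bigr) - h^0(A \otimes I_Z) + h^1(A \otimes I_Z),
\]
and subtracting the corresponding identity for $h^0(V)$ gives $h^0(V) - h^0(V \otimes I_Z) = 2\operatorname{rk}(B) - 2\operatorname{rk}(A) = 2\operatorname{rk}(V)$, which is the definition of $2$-very ampleness. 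Semistability of $V$ is used only to guarantee that $V$ really is a vector bundle (no torsion) so that the notion of $k$-very ampleness applies and the ranks add.

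For the final sentence, I would deduce the three stated conditions as a special case: on $\mathbb{P}^2$, Serre duality gives $h^2(A) = h^0(A^\vee \otimes \mathcal{O}(-3))$, and if $A$ is (semi)stable with $\mu(A) > -3$ then $\mu(A^\vee(-3)) < 0$, forcing $h^0 = 0$; the same slope inequality applied after twisting by $I_Z$ — here one uses that $A \otimes I_Z$ has the same rank and a slightly smaller but still sufficiently large slope, or more carefully that $\mathrm{Hom}(\mathcal{O}(-3), A^\vee \otimes I_Z^\vee\!\!^\vee)$ type arguments vanish — gives $h^2(A \otimes I_Z) = 0$; and ``$B$ general in its moduli space'' is invoked so that $B$ is actually a bundle with no higher cohomology, which holds for a general semistable bundle of appropriate invariants. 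The main obstacle I anticipate is the exactness of $- \otimes I_Z$ on the defining sequence and the precise bookkeeping of $h^1(A \otimes I_Z)$: one must argue that the only potential failure of the counting identity comes from $H^1$ of the twisted $A$, and that the $h^2$-vanishing hypotheses kill exactly that contribution — everything else is an Euler-characteristic comparison that is automatic once the relevant $H^2$'s vanish.
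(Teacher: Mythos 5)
Your proposal is essentially the paper's own proof: the paper simply says the proposition ``follows immediately from the long exact sequence in cohomology,'' and your write-up carries out exactly that computation (twisting by $I_Z$, using $k$-very ampleness of $B$ to get $h^1(B\otimes I_Z)=0$, and using $h^2(A)=h^2(A\otimes I_Z)=0$ to close the Euler-characteristic bookkeeping, with Serre duality plus slope/stability handling the ``in particular'' clause). One small bookkeeping remark: in this paper's convention $k$-very ample refers to subschemes of length $k+1$ (so $2$-very ample concerns length-$3$ schemes and the count is $h^0(V)-h^0(V\otimes I_Z)=3\,\mathrm{rk}(V)$); your argument is unchanged, just replace your length-$2$ and $2\,\mathrm{rk}$ by length $k+1$ and $(k+1)\,\mathrm{rk}$.
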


The proposition follows immediately from the long exact sequence in cohomology.
Using this proposition, we can charactersize $n$-very ampleness for exceptional vector bundles.
\begin{prop}\label{prop: exc ampleness}
An exceptional vector bundle is $n$-very ample iff its slope is at least $n$.
\end{prop}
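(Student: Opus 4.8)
The plan is to prove both directions separately. For the easy direction, suppose $E_\alpha$ is $n$-very ample; I would show its slope $\mu = \mu(E_\alpha)$ must be at least $n$. The definition requires $h^0(E_\alpha \otimes I_Z) = h^0(E_\alpha) - r n$ for all length-$n$ subschemes $Z$, where $r = \mathrm{rk}(E_\alpha)$. In particular $h^0(E_\alpha) \geq rn$. For an exceptional bundle $E_\alpha$ on $\mathbb{P}^2$ with $\mu \geq 0$ one has $h^1 = h^2 = 0$, so $h^0 = \chi(E_\alpha) = r\bigl(\frac{1}{2}\mu^2 + \frac{3}{2}\mu + 1 - \Delta_\alpha\bigr)$ by Riemann--Roch, using $\Delta_\alpha = \frac{1}{2}(1 - 1/r^2)$. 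The inequality $\chi(E_\alpha)/r \geq n$ together with the recursive/numerical description of exceptional slopes from the excerpt should force $\mu \geq n$; I would also need to rule out the negative-slope case, where $E_\alpha$ has a section-free range (e.g. $\mu < 0$ forces $h^0 = 0$ unless $E_\alpha = \mathcal{O}$-type, contradicting $h^0 \geq rn \geq r > 0$ once $n \geq 1$). So the contrapositive handles $\mu < n$.

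For the substantive direction, assume $\mu(E_\alpha) \geq n$ and prove $n$-very ampleness by induction on the exceptional slope using the mutation structure recalled in the Preliminaries. The base cases are the line bundles $\mathcal{O}(d)$ with $d \geq n$: these are well known to be $n$-very ample (a degree-$d$ curve through $d+1$ general points is forced, and more precisely $h^0(\mathcal{O}(d) \otimes I_Z) = h^0(\mathcal{O}(d)) - n$ for any length-$n$ scheme when $d \geq n-1$, hence certainly when $d \geq n$). For the inductive step, I would use Proposition~\ref{Prop: cokernel ampleness}: every exceptional $E_\alpha$ fits, via mutation, in a short exact sequence $0 \to A \to B \to E_\alpha \to 0$ (or is obtained as a kernel; one arranges the mutation triangle so $E_\alpha$ is the cokernel) where $A$ and $B$ are exceptional bundles of strictly smaller "complexity" in the mutation tree, $B$ has no higher cohomology, and — crucially — $\mu(A) > -3$ and $\mu(B) \geq n$. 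The slope bookkeeping is the key point: in the exceptional collection $\{\alpha, \alpha\cdot\beta, \beta\}$ the slopes are controlled, and $\mu(E_\alpha) \geq n$ should propagate to give $\mu(B) \geq n$ (so $B$ is $n$-very ample by induction) and $\mu(A) > -3$ (so $h^2(A) = h^2(A \otimes I_Z) = 0$ by Serre duality, since $\mu(A^\vee \otimes K) < 0$ kills global sections). Semistability of $E_\alpha$ is automatic since exceptional bundles on $\mathbb{P}^2$ are stable.

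The main obstacle I anticipate is the slope arithmetic in the inductive step: one must choose, among the possible mutations of the pair generating $E_\alpha$, one that simultaneously realizes $E_\alpha$ as a cokernel of a map $B \to E_\alpha$ (not a kernel) and keeps both $\mu(A) > -3$ and $\mu(B) \geq \mu(E_\alpha) \geq n$. This requires understanding how $\mu$ behaves under the operation $\alpha \cdot \beta = \frac{\alpha+\beta}{2} + \frac{\Delta_\beta - \Delta_\alpha}{3 + \alpha - \beta}$ and under the shifts $\{\beta - 3, \alpha, \beta\}$, and verifying that the "$> -3$" threshold is exactly the one that survives — which is presumably not a coincidence but reflects Serre duality on $\mathbb{P}^2$ with $K = \mathcal{O}(-3)$. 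A clean way to organize this is to induct on the integer $q$ in $\epsilon(p/2^q)$: slopes with $q = 0$ are the line bundles (base case), and $\epsilon(p/2^q)$ is built from $\epsilon((p\pm1)/2^q)$ which have smaller $q$ or are "between" two integers in a controlled way. I would expect the needed inequality to reduce, after the dust settles, to a monotonicity statement already implicit in the Drézet--Le Potier classification cited as \cite{DLP}, so the real work is assembling these pieces rather than proving anything genuinely new about exceptional bundles.
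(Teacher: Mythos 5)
Your ``if'' direction is essentially the paper's argument: induct on the power $q$ of $2$ in the dyadic label, take $\mathcal{O}(d)$ with $d\ge n$ as the base case, and realize the bundle as a cokernel via mutation so that Proposition \ref{Prop: cokernel ampleness} applies. The bookkeeping you flag as the main obstacle works out, with one correction: you do not need (and cannot have) $\mu(B)\ge\mu(E)$; you only need $\mu(B)\ge n$. Writing the slope as $\epsilon(p/2^q)$ with $p$ odd, the parity of $p$ forces $(p-1)/2^q\ge n$ with strictly smaller dyadic denominator, so $B=E_{\epsilon((p-1)/2^q)}^{\oplus\chi}$ is $n$-very ample by induction even though its slope is smaller than $\mu(E)$, while the kernel supplied by the shifted collection $\{\beta-3,\alpha,\alpha\cdot\beta\}$ has slope $>-3$, which (with stability and Serre duality, $K=\mathcal{O}(-3)$, exactly as you anticipate) kills the relevant $h^2$'s.

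The genuine gap is in the direction you call easy. Counting sections cannot force $\mu\ge n$: by Riemann--Roch, $\chi(E)/r=\tfrac{1}{2}(\mu^2+3\mu+1)+\tfrac{1}{2r^2}$, so the inequality $h^0\ge r(n+1)$ (note the paper's definition of $n$-very ample uses subschemes of length $n+1$, so the count is $r(n+1)$, not $rn$) only bounds $\mu$ from below by roughly $\sqrt{2n}$, far short of $n$. Concretely, $T_{\mathbb{P}^2}$ has slope $\tfrac{3}{2}$, rank $2$, and $h^0=8\ge 2\cdot 3$, so it passes your numerical test for $n=2$ yet is not $2$-very ample; likewise $\mathcal{O}(2)$ passes for $n=3$ but fails to be $3$-very ample. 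No recursive description of exceptional slopes can rescue this, because the proposed premises simply do not imply the conclusion. The paper's argument is geometric: if $E$ is $n$-very ample, the tautological bundle it induces on the Hilbert scheme of $n+1$ points is globally generated, hence its determinant (which is $c_1(E)$ times the class $H$ minus $\tfrac{r}{2}$ times the divisor of non-reduced schemes) is nef, and the nef cone of the Hilbert scheme computed in \cite{LQZ} then forces $\mu(E)\ge n$. You would need this input, or a substitute such as exhibiting a collinear length-$(n+1)$ scheme that fails to impose independent conditions when $\mu<n$, to close this direction.
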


\begin{proof}
The only if part of the statement is immediate from the fact that if you are $n$-very ample your determinant is nef on $\mathbb{P}^{2[n-1]}$, the computation of that nef cone in \cite{LQZ}, and the known computation of the determinant.
For the if part of the proof, we work by induction on the power of 2 in the denominator of the dyadic integer corresponding to the exceptional bundle $E$, denoted by $q$.
If $q=0$, $\epsilon(k) = k$ and $E_k = \mathcal{O}(k)$ for which the result is well know. 
If $q>1$, then recall that \[\epsilon \left( \frac{p}{2^{q}}\right) = \epsilon \left( \frac{p-1}{2^{q}}\right). \epsilon \left( \frac{p+1}{2^{q}}\right)\] and $E$ is the middle bundle of exceptional collection $\{\alpha, \alpha.\beta, \beta\}$ where $\alpha = \epsilon \left( \frac{p-1}{2^{q}}\right)$, $\beta = \epsilon \left( \frac{p+1}{2^{q}}\right)$, and $\alpha . \beta = \epsilon \left( \frac{p}{2^{q}}\right)$.
Note, since $p$ was odd and $\epsilon(\frac{p}{2^{q}}) > \epsilon(n) = n$, $\frac{p - 1}{2^q}$ reduces and is greater than or equal to $n$ (so $\alpha \geq n$).
By induction hypothesis then, the conclusion holds for $E_\alpha$.

We then have the exceptional collection $\{\beta-3,\alpha,\alpha .\beta\}$.
From this collection, we know that we have the short exact sequence 
\[0 \to E_{(\beta-3).(\alpha.\beta)} \to E_{\alpha}^{\chi(E_{\alpha}.E_{\alpha.\beta})}\to E_{\alpha.\beta} \to 0.\]
Since $-3 < \beta - 3 < (\beta-3).(\alpha.\beta)$, $h^2\left(E_{(\beta-3).(\alpha.\beta)} \right) =h^2\left(E_{(\beta-3).(\alpha.\beta)} \otimes I_Z \right) =0$ for all ideal sheaves $I_Z$ of $n+1$ points.
Since the induction hypothesis holds for $E_{\alpha}$ and $\chi(E_{\alpha}) >0$ as $\alpha>0$, we can then apply Proposition \ref{Prop: cokernel ampleness} to get that $E_{\alpha.\beta}$ is $n$-very ample as desired.
\end{proof}

For non-exceptional bundles, we recall that the general element $V$ of the moduli space of semi-stable sheaves with chern character $\xi$ has a Gaeta-type resolution of one of the forms \[0 \to \mathcal{O}(d)^a \bigoplus \mathcal{O}(d+1)^b \to \mathcal{O}(d+2)^c \to V \to 0 \text{ or}\]
\[0 \to \mathcal{O}(d)^a \to \mathcal{O}(d+1)^b \bigoplus \mathcal{O}(d+2)^c \to V \to 0\] where $a$, $b$, $c$, and $d$ are all determined numerically. 
The form of the resolution and $d$ are determined by which region of the $(\mu,\Delta)$-plane $\xi$ lies in. These regions split up the upper plane using parabolas of the form $\frac{1}{2}(\mu-d)(\mu-d+1)$ and $\frac{1}{2}(\mu-d-1)(\mu-d+1)$ as shown in the figure.
\[\begin{tikzpicture}[domain=0:2, scale = 0.5]
\draw[black, line width = 0.50mm]   plot[smooth,domain=-.5:.4] (\x, {0});
\draw[black, line width = 0.50mm]   plot[smooth,domain=.6:6] (\x, {0});
\draw[black, line width = 0.50mm]   plot[smooth,domain=.3:.5] (\x, {2*(\x-.4)});
\draw[black, line width = 0.50mm]   plot[smooth,domain=.5:.7] (\x, {2*(\x-.6)});
\draw[black, line width = 0.50mm]  (0,-2)--(0,5);
\draw[black, line width = 0.50mm]   plot[smooth,domain=2:6] (\x, {.2*(\x-2)*(\x-1)});
\draw[black,dashed, line width = 0.50mm]   plot[smooth,domain=3:6] (\x, {.2*(\x-3)*(\x-1)});
\draw[black, line width = 0.50mm]   plot[smooth,domain=3:6] (\x, {.2*(\x-3)*(\x-2)});
\node[below] at (2, 0)  {d};
\node[below] at (3, 0)  {d+1};
\node[left] at (0,4)  {$\Delta$};
\node[below] at (5,0)  {$\mu$};
\node[right] at (6,4)  {$\frac{1}{2}(\mu-d)(\mu-d+1)$};
\node[right] at (6,3)  {$\frac{1}{2}(\mu-d-1)(\mu-d+1)$};
\node[right] at (6,2.4)  {$\frac{1}{2}(\mu-d-1)(\mu-d)$};
\end{tikzpicture}\]
The general bundles $V$ for which $(\mu(V),\Delta(V))$ falls in the region between $\frac{1}{2}(\mu-d)(\mu-d+1)$ and $\frac{1}{2}(\mu-d-1)(\mu-d+1)$ has a resolution of the first form while one which falls between $\frac{1}{2}(\mu-d-1)(\mu-d+1)$ and $\frac{1}{2}(\mu-d)(\mu-d+1)$ has a resolution of the second form.

If $d\leq -3$, it is immediate that $V$ has no global sections and therefore cannot be $2$-very ample. 
If $d \geq 1$, all of the line bundles in both possible middle terms are $2$-very ample so the proposition tells us that $V$ is $2$-very ample.
The same is true if $d \geq 0$ and you have the first form of resolution (including the case $b=0$).
Thus, the only uncertain cases are the cases $d=-2$, $d=-1$, and the second form for $d=0$. 
An exact classification of 2-very ampleness is certainly possible, but goes beyond the scope of this paper as it is unknown if it is equivalent to computing the pliant cone. 

\subsection{The pliant cone}
When a bundle $V$ is 2-very ample, the Schur classes of $V^{[3]}$ are pliant as they are the pullbacks of the Schubert classes from the morphism to a Grassmannian given by that tautological bundle. 
We can then use the vector bundles we have shown to be pliant to generate an inner bound on the pliant cone. 
We have shown three types of bundle to be 2-very ample, which we now recall. 

The first type is all the exceptional bundles with slope between 2 and 3.
The second type is the limit of the exceptional bundles as we twist by arbitrarily large multiples of $\mathcal{O}(1)$.
Note, we only use those bundles with rank less than 100.
By \cite{DLP}, these are $\mathcal{O}(d)$, $T_{\mathbb{P}^2}(d)$, $E_{\frac{2}{5}}(d)$, $E_{\frac{3}{5}}(d)$, $E_{\frac{5}{13}}(d)$, $E_{\frac{8}{13}}(d)$, $E_{\frac{12}{29}}(d)$, $E_{\frac{17}{29}}(d)$, $E_{\frac{13}{34}}(d)$, $E_{\frac{21}{34}}(d)$, $E_{\frac{34}{89}}(d)$, and $E_{\frac{55}{89}}(d)$.
The third type is bundles along the parabola given by \[0 \to \mathcal{O}^a \to \mathcal{O}(2)^c \to V \to 0.\]
As we are only looking at the limits of bundles along the parabola, and all such curves are parabolas with the same leading coefficient of $\frac{1}{2}$ (i.e. $\Delta \sim \frac{1}{2} \mu^2$), it does not matter which one we chose. 

The Schur classes of these generate the following cones which are inner bounds on the pliant cone. 

\begin{prop}
$\text{Pl}^2\left( \mathbb{P}^{2[3]}\right)$ contains the cone spanned by the classes $C$, $C+2E$, $A+B+D$, $2A+D+2E$, $2A+B+4D+7E$, $5A+5B+C+7D+5E$, 
$7A+3B+9D+12E$, 
$35A+30B+6C+52D+50E$, 
$40A+25B+3C+58D+69E$, 
$247A+195B+36C+369D+384E$, 
$260A+182B+28C+385D+434E$, 
$1717A+1309B+231C+2563D+2739E$, 
$1751A+1275B+210C+2605D+2870E$, 
$11837A+8900B+1540C+17656D+19052E$, and
$11926A+8811B+1485C+17766D+19395E$.\\
$\text{Pl}^3\left( \mathbb{P}^{2[3]}\right)$ contains the cone spanned by the classes 
$2Y+Z$,
$X+Y$,
$W$,
$W+6X+3Y$,
$2V+X+7Y+2Z$,
$5V+2X+12Y+10Z$,
$5U+10V+W+24X+32Y+10Z$,
$6U+W$,
$6U+7V+10X+15Y+8Z$,
$8U+30V+35X+85Y+38Z$,
$24U+90V+2W+132X+276Y+105Z$,
$69U+390V+2W+447X+1203Y+501Z$,
$96U+230V+10W+368X+643Y+276Z$, and
$648U+3250V+35W+4078X+10093Y+4028Z$.\\
$\text{Pl}^4\left( \mathbb{P}^{2[3]}\right)$ contains the cone spanned by the classes 
$\gamma$,
$\gamma+\epsilon$,
$\alpha+\delta+\epsilon$,
$\alpha+\beta+\delta$,
$2\alpha+\beta+\epsilon$,
$2\alpha+\beta+4\delta+4\epsilon$,
$3\alpha+5\beta+2\gamma+4\epsilon$,
$5\alpha+4\beta+4\delta+4\epsilon$,
$7\alpha+2\beta+2\delta+5\epsilon$, and
$8\alpha+8\beta+2\gamma+12\delta+5\epsilon$.
\end{prop}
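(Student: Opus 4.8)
The plan is to produce enough explicit pliant classes --- Schur classes and products of Schur classes of tautological bundles attached to $2$-very ample bundles on $\mathbb{P}^2$ --- to span each of the three listed cones, and then to verify that they span exactly that cone. The key point is that if $V$ is $2$-very ample of rank $r$, then $V^{[3]}$ is globally generated and defines a morphism $\varphi_V\colon\mathbb{P}^{2[3]}\to G(3r,h^0(V))$, so each Schur class $c_\lambda(V^{[3]})=\varphi_V^*(\sigma_\lambda)$ is pliant, and hence so is every product $c_{\lambda_1}(V_1^{[3]})\cdots c_{\lambda_s}(V_s^{[3]})$ of total codimension $k$. Therefore the convex cone generated by all classes of this shape lies inside $\mathrm{Pl}^k(\mathbb{P}^{2[3]})$, and it suffices to generate enough of them to exhaust the listed cones.

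First I would fix the finite list of input bundles using the results above. By Proposition~\ref{prop: exc ampleness}, the exceptional bundles of slope in $[2,3]$ are $2$-very ample; by Proposition~\ref{Prop: cokernel ampleness}, the general bundle with a resolution $0\to\mathcal{O}^a\to\mathcal{O}(2)^c\to V\to 0$ is $2$-very ample; and for an exceptional slope $\alpha$ the twist $E_\alpha(d)$ is $2$-very ample once $d\gg0$. For the last two families the Schur classes of the tautological bundle grow polynomially in the defining numerical parameter (respectively $d$, and the parameter along the parabola $\Delta\sim\frac12\mu^2$), so the leading term determines a well-defined limiting ray; for the twisted exceptional bundles this ray depends only on $E_\alpha$ up to integer twist, so only the twelve exceptional bundles of rank below $100$ listed above are needed, and for the parabola the limit is independent of which parabola is chosen since all have leading coefficient $\frac12$. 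This produces finitely many pliant rays.

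Next I would carry out the computation. The Chern, and hence Schur, classes of $\mathcal{O}(d)^{[3]}$ are recorded above; for each higher-rank bundle $V$ one resolves $V$ by direct sums of $\mathcal{O}(d)$, $\mathcal{O}(d+1)$, $\mathcal{O}(d+2)$, applies the tautological functor (which is exact, so the resolution is preserved), and reads off $c(V^{[3]})$ by Whitney's formula together with the known ring structure of $A^*(\mathbb{P}^{2[3]})$ --- exactly the bookkeeping carried out in Appendix~\ref{app a}. One then converts Chern to Schur classes via the Schubert relations, forms the required products in codimensions $2$, $3$, and $4$, extracts the limiting rays as leading coefficients of the polynomial families, and computes (in Macaulay2) the convex hull of the resulting finite set of rays. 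The output is the cone in the statement, which is pliant by the first paragraph.

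The principal obstacle is making the limit step rigorous while keeping the convex-hull computation honest: one must confirm that the rescaled Schur classes of $E_\alpha(d)^{[3]}$ and of the parabola bundles actually converge, identify the limits from the top-degree coefficients of the (appendix) polynomial formulas, and then appeal to the closedness of $\mathrm{Pl}^k$ so those limit rays are genuinely pliant; and one must check that each listed class is extremal in the cone generated and that nothing in the generating set escapes the listed cone. Once polynomiality of the Schur classes in the numerical parameters is granted --- which is apparent from the structure of Appendix~\ref{app a} --- the remaining work is routine if lengthy.
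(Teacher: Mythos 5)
Your proposal matches the paper's argument: the paper likewise takes the three families of $2$-very ample bundles established in Section 7 (exceptional bundles of slope in $[2,3]$, twists $E_\alpha(d)$ for $d\gg 0$ of the rank $<100$ exceptional bundles taken as limiting rays, and the limit along the parabola $0\to\mathcal{O}^a\to\mathcal{O}(2)^c\to V\to 0$), notes that Schur classes of $V^{[3]}$ for $2$-very ample $V$ are pullbacks of Schubert classes hence pliant, and computes the resulting generated cones using the Chern/Schur class formulas of Section 3 and Appendix A. Your additional care about polynomiality in the twist parameter and the closedness of $\mathrm{Pl}^k$ justifying the limit rays is exactly the implicit content of the paper's ``limit of the exceptional bundles'' step, so the proposal is correct and takes essentially the same route.
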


\appendix
\section{Chern classes of tautological bundles}
\label{app a}

In this appendix, we list all Chern classes of tautological bundles coming from any vector bundle on $\mathbb{P}^2$. The Schur classes follow from the group structure \cite{EL2} and the Schubert relations.
\begin{align*}
c_1\left(V^{[3]}\right) &= (c_1-2r)H+rF  \\
c_2\left(V^{[3]}\right) &= 
\frac{1}{2}r (3 r - 1)A
+\left(c_1 r - \frac{1}{2}(3r-1)r\right)B
+\binom{c_1-2r+1}{2}C\\
&+\left(c_1 (2 r - 1) - r (3 r - 2)\right)D
+\left(c_2 - 2\binom{r}{2}\right)E\\
c_3\left(V^{[3]}\right) &= 
(c_1-2r+1)\left(c_2-(r-1)(c_1+2r)\right) U
+r\left(c_2-\frac{2}{3}(r-1)(2r-1)\right)V\\
&+\binom{c_1-2r+2}{3}W
+((7/3)(2r-1)r(r-1) +c_1 ((2r-1)c_1 - 6 r^2 + 6 r - 1))X\\
&+(c_1 \binom{3 r - 1}{2}- 6 \binom{r}{2}(2 r - 1))Y
+(r-1)(c_2+\frac{1}{6}(3 c_1 (c_1 - 3 r + 3) - r (r + 4)))Z\\
\end{align*}
\begin{align*}
c_4 \left(V^{[3]}\right) &= 
\frac{1}{4}(r-1) (-24 c_1 r^2 + 6 c_1^2 r + 36 c_1 r - 4 c_1^2 - 12 c_1 + 15 r^3 - 33 r^2 + 16 r)\alpha\\
&+(\frac{1}{2}(3 r - 2) (r - 1)c_2)\alpha\\
&+-\frac{1}{8}(r-1)(-36 c_1 r^2 + 24 c_1^2 r + 56 c_1 r - 4 c_1^3 - 20 c_1^2 - 16 c_1 + 13 r^3 - 29 r^2 + 14 r)\beta\\
&+((r-1)c_1-\frac{1}{2}(3r-2)(r-1))c_2\beta\\
&+(c_2-c_1 (r - 1))\binom{-c_1 + 2 r - 1}{2}\gamma\\
&+\frac{1}{2}(r - 1) (c_1 (2(2r-1)(r-1)-(3r-2)c_1)+ (r-1)^2r)\delta\\
&+((2r-1)c_1-(3r-1)(r-1))c_2\delta\\
&+\frac{1}{8}(r - 1) (3 r^3 - 3 r^2 - 2 r -2 c_1 (r - 2) (c_1 - 2 r + 3))\epsilon\\
&+\frac{1}{2}(c_2^2-2r^2+4r-3)c_2\epsilon\\
\end{align*}
\begin{align*}
c_5\left(V^{[3]}\right) &= (6\binom{r}{3}(r-1)(2r-3)-\frac{1}{4} c_1 (r - 1) (c_1^2 (r - 2) -(5 r - 6) (2 r - 3)c_1 + 20 r^3 - 72 r^2 + 82 r - 28))\phi\\
&+(\frac{1}{2} c_2 (-c_1 + 2 r - 2) (c_1(r-1) + 2 r^2 - 5 r + 4))\phi
+(\frac{1}{2}c_1-(r-1))c_2^2\phi\\
&-\frac{1}{40}(r-1)(40(r - 1) (2 r - 3)c_1^2 +(-255r^3 + 895r^2 - 1010r  + 360)c_1 + r (r - 2) (131 r^2 - 317 r + 192))\psi\\
&-\frac{1}{2}c_2 (r - 1) (- c_1^2 +3(r-1)c_1+ 3 r^2 - 8 r + 8)\psi
+(r-1)c_2^2)\psi
\end{align*}

\begin{align*}
c_6(V^{[3]}) &= (1/48)r^6\mu^6-(1/16)r^5\mu^6-(1/4)r^6\mu^4+(1/16)r^4\mu^6+(1/4)r^6\mu^3+(9/8)r^5\mu^4-(1/48)r^3\mu^6\\
&+(11/16)r^6\mu^2-(11/8)r^5\mu^3-(15/8)r^4\mu^4-(51/40)r^6\mu-(65/16)r^5\mu^2+(11/4)r^4\mu^3+(11/8)r^3\mu^4\\
&+(131/240)r^6+(71/8)r^5\mu+(461/48)r^4\mu^2-(19/8)r^3\mu^3-(3/8)r^2\mu^4-(71/16)r^5-(195/8)r^4\mu\\
&-(183/16)r^3\mu^2+(3/4)r^2\mu^3+(679/48)r^4+(265/8)r^3\mu+(55/8)r^2\mu^2-(1067/48)r^3-(447/20)r^2\mu\\
&-(5/3)r\mu^2+(2077/120)r^2+6r\mu-(16/3)r\\
&+((1/8)r^5\mu^4-(1/4)r^4\mu^4-(3/4)r^5\mu^2+(1/8)r^3\mu^4+(1/2)r^5\mu+(11/4)r^4\mu^2+(3/8)r^5-(9/4)r^4\mu)\Delta\\
&+(-(7/2)r^3\mu^2-(25/12)r^4+(13/4)r^3\mu+(3/2)r^2\mu^2+(41/8)r^3-(3/2)r^2\mu-(77/12)r^2+(10/3)r)\Delta\\
&+((1/4)r^4\mu^2-(1/4)r^3\mu^2-(1/2)r^4+(3/2)r^3-(3/2)r^2)\Delta^2
+(1/6)r^3\Delta^3
\end{align*}


\bibliographystyle{plain}
\bibliography{citations}

\end{document}